\newtheorem{thm}{Theorem}[section]
\newtheorem{prop}[thm]{Proposition}
\newtheorem{lm}[thm]{Lemma}
\newtheorem{cor}[thm]{Corollary}
\newtheorem{dfn}[thm]{Definition}
\theoremstyle{remark}
\newtheorem{rk}{Remark}
\numberwithin{equation}{section}
\newcommand{\sa}{{e^{-it|\nabla|^\al}}}
\newcommand{\sap}{{e^{-i(t-s)|\nabla|^\al}}}
\newcommand{\na}{|\nabla|^\al}
\newcommand{\al}{\alpha}
\newcommand{\ep}{\varepsilon}
\newcommand{\bxi}{\langle\xi\rangle}
\newcommand{\xil}{\xi_\tau}
\newcommand{\bt}{{\langle t \rangle}}
\begin{document}
\title[Modified scattering of fractional Schr\"odinger equations]
      {On the modified scattering of $3$-d Hartree type fractional Schr\"odinger equations with Coulomb potential}

\author[Y. Cho]{Yonggeun Cho}
\address{Department of Mathematics, and Institute of Pure and Applied Mathematics, Chonbuk National University, Jeonju 561-756, Republic of Korea}
\email{changocho@jbnu.ac.kr}

\author[G. Hwang]{Gyeongha Hwang}
\address{National Center for Theoretical Sciences, No. 1 Sec. 4 Roosevelt Rd., National Taiwan University, Taipei, 106, Taiwan}
\email{ghhwang@ncts.ntu.edu.tw}

\author[C. Yang]{Changhun Yang}
\address{Department of Mathematical Sciences, Seoul National University, Seoul 151-747, Republic of Korea}
\email{maticionych@snu.ac.kr}

\begin{abstract}
	In this paper we study 3-d Hartree type fractional 
	Schr\"odin-ger equations:
	$$
	i\partial_{t}u-\vert\nabla\vert^{\alpha}u
	= \lambda\left(
	|x|^{-\gamma} *\vert u\vert^{2}
	\right)u,\;\;1 < \al < 2,\;\;0 < \gamma < 3,\;\; \lambda \in \mathbb R \setminus \{0\}.
	$$
	In \cite{cho}  it is known that no scattering occurs in $L^2$ for the long range ($0 < \gamma \le 1$). In \cite{c0, chooz2, cho1} the short-range scattering ($1 < \gamma < 3$) was treated for the scattering in $H^s$. In this paper we consider the critical case ($\gamma = 1$) and
	prove a modified scattering in $L^\infty$ on the frequency to the Cauchy problem with small initial data. For this purpose we investigate the global behavior of $x e^{it\na} u$, $x^2 e^{it\na} u$ and $\bxi^5 \widehat{e^{it\na} u}$. Due to the non-smoothness of $\na$ near zero frequency the range of $\al$ is restricted to $(\frac{17}{10}, 2)$.
\end{abstract}

\thanks{2010 {\it Mathematics Subject Classification.} 35Q55, 35Q40. }
\thanks{{\it Key words and phrases.} Hartree type fractional Schr\"odinger equation, Coulomb potential, modified scattering, weighted spaces}
\maketitle

	\section{Introduction}
	The fractional Schr\"odinger equations have been derived to describe natural phenomena in the context of fractional quantum mechanics \cite{la1,la2}, system of bosons \cite{fl}, system of long-range lattice interaction \cite{kls}, water waves \cite{ss}, turbulence \cite{cmmt} and so on.
	In this paper we consider the Hartree type fractional Schr\"odinger equation with the Coulomb potential. Heuristically, Hartree nonlinearity can be interpreted as an interaction between particles or waves with potential $V$ \cite{fl}. We are concerned with the following Cauchy problem:
	\begin{equation}\label{equation}
	\begin{cases}
	i\partial_{t}u-\vert\nabla\vert^{\alpha}u
	= \left(
	V *\vert u\vert^{2}
	\right)u \;\;\mbox{in}\;\; {\mathbb R^{1+3}}, \\
	u(0,x) = u_{0}(x),	
	\end{cases}
	\end{equation}
	where $ u:(t,x)\in\mathbb{R}\times\mathbb{R}^{3}\rightarrow\mathbb{C},
	1<\alpha<2$, and $V = \lambda |x|^{-1}$ for $\lambda\in\mathbb{R}\setminus \{0\}$.	
	Here $\na = (-\Delta)^\frac\al2 = \mathcal F^{-1}|\xi|^\al \mathcal F$ is the fractional derivative of order $\al$ and $*$ denotes the space convolution.
	
	By Duhamel's formula, \eqref{equation} is written as an
	integral equation
	\begin{equation}\label{integral}
	u(t) = \sa u_0  - i\int_0^t \sap(V *|u(s)|^2)u(s)\,ds,
	\end{equation}
	where the linear propagator $\sa v_0$ to be the solution to the linear problem $i\partial_t v =  |\nabla|^\al v$ with initial datum $v_0$.
	Then it is at least formally given  by
	\begin{align}\label{int eqn}
	\sa v_0 = \mathcal F^{-1} e^{-it|\xi|^\al} \mathcal Fv_0 = (2\pi)^{-3}\int_{\mathbb{R}^3} e^{i( x\cdot \xi - t|\xi|^\al )}\widehat{v_0}(\xi)\,d\xi.
	\end{align}
	Here $\widehat{v} = \mathcal F v$ is the Fourier transform of $v$  such that $\widehat{v}(\xi) = \int_{\mathbb{R}^3} e^{- ix\cdot \xi} v(x)\,dx$ and we denote its inverse Fourier transform by  $\mathcal F^{-1} w$ defined as $\mathcal F^{-1} w(x)=(2\pi)^{-3}\int_{\mathbb{R}^3} e^{ix\cdot \xi} w(\xi)\,d\xi$. We list basic notations at the end of this section.
	
	We can also describe the solution of \eqref{equation} in the frequency space. To do so let us define
	\begin{equation}\label{def:f}
	v(t,x) := e^{it\vert\nabla\vert^{\alpha}}u(x).
	\end{equation}
	By Duhamel's formula \eqref{int eqn} is written as
	\begin{equation}\label{integraleqn}
	\widehat{v} = \widehat{u}_{0}(\xi)
	+\int_{0}^{t}I(s,\xi)\,ds,
	\end{equation}
	where
	\begin{equation}\label{integraleqn}
	I(s, \xi)
	=
	ic_0
	\int_{\mathbb{R}^{3}}
	e^{is\phi_{\alpha}(\xi,\eta)}
	\vert\eta\vert^{-2}
	\widehat{\vert u\vert^{2}}(s,\eta)
	\widehat{v}(s,\xi-\eta)\,d\eta ds.
	\end{equation}
	Here $c_0 = -2(2\pi)^{-2}\lambda$ (for this we used $\widehat{|x|^{-1}} = 4\pi |\eta|^{-2}$) and the phase function $\phi_\al$ is defined by $$\phi_{\alpha}(\xi,\eta) = \vert\xi\vert^{\alpha}-\vert\xi-\eta\vert^{\alpha}.$$
	The formula of $I$ can be rewritten as
	\begin{align}\label{Duhamel}
	I(s,\xi) :=
	ic_1\iint_{\mathbb{R}^{3}\times\mathbb{R}^{3}}
	e^{is\phi(\xi,\eta,\sigma)}
	\vert\eta\vert^{-2}
	\widehat{v}(s,\xi-\eta)
	\widehat{v}(s,\eta+\sigma)
	\overline{\widehat{v}(s,\sigma)}
	d\eta d\sigma,
	\end{align}
	where $c_1 = -2(2\pi)^{-5}\lambda$ and
	$$
	\phi(\xi,\eta,\sigma) =
	\vert\xi\vert^{\alpha}
	-\vert\xi-\eta\vert^{\alpha}
	-\vert\eta+\sigma\vert^{\alpha}
	+\vert\sigma\vert^{\alpha}.
	$$	
	These formulae play a crucial role in the proof of weighted energy estimates.
	This method which is based on dealing with space-time resonances have been systematically studied in \cite{PNJ}, where general approach
	of using Fourier analysis methods to investigate the long-time behavior of dispersive PDEs is arranged. Especially for the modified scattering with this technique, see also \cite{KP}. Note that $\sa ( xv) =  \mathbf J u$, where
	$$
	\mathbf J = \sa x e^{it\vert\nabla\vert^{\alpha}} = x + i\al t |\nabla|^{\al-2}\nabla.
	$$

	If the solution $u$ of \eqref{equation} is sufficiently smooth, then it satisfies the mass and energy conservation laws:
	\begin{align}\begin{aligned}\label{consv}
	m(u) &= \|u\|^2_{L^2} = m(u_0), \\  %= \|\varphi\|^2_{L^2}
	E(u) &=  \frac12\||\nabla|^\frac\al2 u\|_{L^2}^2 + \frac\lambda4\iint |x-y|^{-1}|u(x)|^2|u(y)|^2\,dxdy = E(u_0).
	\end{aligned}\end{align}
	The equation \eqref{equation} has the scaling invariance in $\dot H^\frac{1-\al}2$ and thus it is referred to mass(energy)-subcritical.
	The subcritical nature readily leads us to the global well-posedness. This can be done by a simple energy estimate in $H^N$. In addition, a weighted energy estimate enables us to show the global evolution of $xv, x^2v, \widehat v$ such that
	$$
	x v \in C(\mathbb R; H^3), x^2 v \in C(\mathbb R; H^2), \bxi^5 \widehat v \in C(\mathbb R; C_b(\mathbb R^3)),
	$$
	provided the same regularity conditions are imposed to the initial data. We will deal with the details in Section 2.

	In this paper we focus on asymptotic behavior of solution to $\eqref{equation}$ as time goes to infinity. We say that the solution $u$ scatters to a linear asymptotic state if the effect of the nonlinear term becomes negligible as time goes to infinity. But our equation may not scatter even though the initial data is arbitrarily small \cite{cho}. Instead, we can observe the phenomenon of ``modified scattering" for small solutions by identifying a proper nonlinear logarithmic correction. This nonlinear modified scattering also happens similarly for standard Hartree euqation($\alpha=2$ case) \cite{HN} and Boson star equation \cite{pu}. More precisely, our main topic can be stated as follows: For sufficiently small initial data $u_{0}$ which is defined in a weighted Sobolev space, there exist a global solution to $\eqref{equation}$ which decays in $L^\infty$ but behaves in nonlinear fashion over time.
	%%%%%%%%%%%%%%%%%%%%%%%%%%%%%%%%%%%%%%%%%%%%%%%%%%%%%%%%%%%%%%%%%%%%%%%%%%%%%%%%%%%%%%%%%%%%%%%%%%%%%%%%%%%%%%%%%%%%%%%%%%%
	%%%%%%%%%%%%%%%%%%%%%%%%%%%%%%%%%%%%%%%%%%%%%%%%%%%%%%%%%%%%%%%%%%%%%%%%%%%%%%%%%%%%%%%%%%%%%%%%%%%%%%%%%%%%%%%%%%%%%%%%%%%
	\begin{rk}
		Until now there have been numerous results on the scattering on the fractional Schr\"odinger equations with general Hartree type nonlinearity including power type nonlinearity. We refer the readers to \cite{chooz, chho, gswz, hs, cho1, chkl, bhl, pu, hnog, cho, c0, chooz2} and references therein.
		We brief on the known results.
		
		%%%%%%%%%%%%%%%%%%%%%%%%%%%%%%%%%%%%%%%%%%%%%%%%%%%%%%%%%%%%%%%%%%%%%%%%%%%%%%%%%%%%%%%%%%%%%%%%%%%%%%%%%%%%%%%%%%%%%%%%%%%
		$(1)$ If $V = \lambda |x|^{-\gamma}$ and $0 < \gamma \le 1$ (or $1 < \gamma < 3)$, then $V$ is referred to be of long-range (or short-range, respectively) interaction. If $V$ has a long range, it was shown in \cite{cho} that many smooth solutions may not scatter even in $L^2$. The short-range scattering in $H^s$ can be shown simply by Strichartz estimates when $2 < \gamma < 3$ and $s > \frac{\gamma-\al}2$ since the dispersion of solution is fast enough. This is also the case for Hartree and semi-relativistic equation. See \cite{go, chooz, hnog, chho}.
		
		%%%%%%%%%%%%%%%%%%%%%%%%%%%%%%%%%%%%%%%%%%%%%%%%%%%%%%%%%%%%%%%%%%%%%%%%%%%%%%%%%%%%%%%%%%%%%%%%%%%%%%%%%%%%%%%%%%%%%%%%%%%
		$(2)$ In case when $1 < \gamma \le 2$, the dispersion of solution to \eqref{equation} is not enough for Strichartz estimate on the whole time interval. In view of the scattering theory of Hartree and semi-relativistic equations, the scattering is expected to be shown in this range via radial symmetry assumption or weighted energy estimates. Under the radial assumption the global Strichartz estimate can cover the range $1 < \gamma \le 2$ in part. To be more precise, small data scattering in $H^\frac{\gamma-\al}2$ is possible when $\al, \gamma$ is restricted to $\frac{6}{5} \le \al < 2$ and $\al \le \gamma < 3$. For this see \cite{chho}. In \cite{cho1} even a large data scattering in energy space is treated under radial symmetry when $\gamma = 2\al$ (energy-critical) and $\frac{6}{5} < \al < 2$.
		%%%%%%%%%%%%%%%%%%%%%%%%%%%%%%%%%%%%%%%%%%%%%%%%%%%%%%%%%%%%%%%%%%%%%%%%%%%%%%%%%%%%%%%%%%%%%%%%%%%%%%%%%%%%%%%%%%%%%%%%%%%
		
		$(3)$ The other way is to use a weighted energy estimate for the norm $\|\mathbf J u\|_{\dot H^s}$ as in Hartree Schr\"odinger equations \cite{hats} and semi-relativistic equations \cite{hnog}. If the initial data is in a weighted space, then the solution could be dispersive enough to scatter. Recently, in \cite{c0, chooz2} the small data scattering was shown in weighted space when $1 < \gamma \le 2$ and $\al_0:= \max\big(\frac{6-4\gamma}{2-\gamma}, 1\big) < \al <2$. The authors used a commutator estimate based on Balakrishnan's formula \cite{ba, klr, bhl} to get around the difficulty caused by the non-locality, the lack of dispersiveness of $\sa$ and the lack of smoothness of $|\nabla|^\al$. The cost for the commutator estimate is to restrict the range $\al$. It would be interesting to settle the remaining range $1 < \al \le \al_0$ for the short-range scattering problem.
	\end{rk}
	%%%%%%%%%%%%%%%%%%%%%%%%%%%%%%%%%%%%%%%%%%%%%%%%%%%%%%%%%%%%%%%%%%%%%%%%%%%%%%%%%%%%%%%%%%%%%%%%%%%%%%%%%%%%%%%%%%%%%%%%%%%
	%%%%%%%%%%%%%%%%%%%%%%%%%%%%%%%%%%%%%%%%%%%%%%%%%%%%%%%%%%%%%%%%%%%%%%%%%%%%%%%%%%%%%%%%%%%%%%%%%%%%%%%%%%%%%%%%%%%%%%%%%%%
	
	Our goal is to show that the global solution to $\eqref{equation}$ with the long-range ($\gamma = 1$) scatters in $L^\infty$ on the frequency space. Heuristically speaking, the time decay of $L^{2}$-norm of nonlinear term, which is computed on a linear solution, is $t^{-\gamma}$. For the details see the proof of Theorem 1.2 in \cite{cho}. In particular, the decay of nonlinear term is not integrable in time if $\gamma=1$, which referred as ``scattering-critical" case. The following is our main theorem.

	\begin{thm}[Modified scattering]\label{main thm}
		Let $\frac{17}{10} < \alpha < 2$ and $N = 1500$.
		Suppose $u_{0}$ satisfies that
		\begin{equation}\label{initialvalue}
		\lVert u_{0}\|_{H^{N}}
		+ \|x u_0\|_{H^3}
		+\lVert x^2 u_0\|_{H^{2}}
		+\lVert \bxi^{5} \widehat u_0\|_{L^{\infty}}
		\le \epsilon_{0}.
		\end{equation}	
		Then there exists $\bar{\epsilon}_{0}$ such that
		for all $\epsilon_{0}\le\bar{\epsilon}_{0}$,
		the Cauchy problem \eqref{equation} has a unique global solution $u(t,x)$ such that
		\begin{align}\label{uniform}
		\sup_{t > 0}\langle t \rangle^{\frac{3}{2}}
		\lVert u(t)\|_{L^{\infty}}
		\lesssim
		\epsilon_{0}.
		\end{align}
		Moreover, $u$ satisfies the asymptotic behavior as follows:
		Let
		\begin{align}\label{balpha}
		B_{\alpha}(t,\xi):= -\frac{\lambda}{\alpha(2\pi)^3}
		\int_{0}^{t}\int_{\mathbb{R}^3}
		\left\vert
		\frac{\xi}{|\xi|^{2-\alpha}}
		-\frac{\sigma}{|\sigma|^{2-\alpha}}
		\right\vert^{-1}
		\vert \widehat{u}(\xi)\vert^{2}
		d\sigma
		\varphi(s^{-\theta}\xi )
		\frac{1}{\langle s\rangle}ds,
		\end{align}
		where $\varphi$ is a smooth compactly supported function and $\theta = \frac{3\al-5}{40(\al+1)}$.
		Then there exist asymptotic state $v_{+}$,
		such that for all $t > 0$
		\begin{align}\label{asympt}
		\lVert \bxi^{5}
		[
		e^{-iB_{\alpha}(t,\xi)}
		\widehat{v}(t,\xi) - v_{+}(\xi)
		]
		\|_{L_{\xi}^{\infty}}
		\lesssim
		\langle t \rangle^{-\delta}
		\end{align}
		for some $0 < \delta < \min(\frac{2-\alpha}{3\al}, \frac1{100})$. Similar result holds for $t < 0$.
	\end{thm}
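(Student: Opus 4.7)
The plan is a bootstrap on the composite norm
\[
M(T) := \sup_{0\le t \le T}\Big(\|u(t)\|_{H^N} + \|xv(t)\|_{H^3} + \|x^2 v(t)\|_{H^2} + \|\bxi^5 \widehat v(t)\|_{L^\infty}\Big),
\]
starting from the a priori assumption $M(T)\le 2\epsilon_0$ and aiming to upgrade it to $M(T)\le \epsilon_0 + C\epsilon_0^{3}$. The first three contributions are controlled by the weighted-energy estimates announced in Section~2 (built around $\mathbf{J}$ and $\mathbf{J}^{2}$), so the real work is the $L^\infty$ bound on $\bxi^{5}\widehat v$. As a preliminary I would derive the pointwise decay $\|u(t)\|_{L^\infty} \lesssim \bt^{-3/2} M(T)$ for $u = \mathcal F^{-1} e^{-it|\cdot|^{\al}}\widehat v$ via a Klainerman--Sobolev / stationary-phase argument: away from $\xi=0$ the stationary point of $x\cdot\xi - t|\xi|^{\al}$ is nondegenerate and the weights $xv,x^2v$ together with $\bxi^5 \widehat v$ give the full $t^{-3/2}$, while near $\xi=0$ the $L^\infty$ control of $\bxi^{5}\widehat v$ absorbs the non-smoothness of $\na$. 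This yields \eqref{uniform} as soon as the bootstrap closes.

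The heart of the proof is the analysis of $\partial_t \widehat v(t,\xi) = I(t,\xi)$ in the symmetric form \eqref{Duhamel}. I first localize $\xi$ via $\varphi(t^{-\theta}\xi)$ as in \eqref{balpha} (the complement is handled by a frequency-localized dispersive estimate using the $H^N$ bound) and split the $\eta$-integration into a resonant region $\{|\eta|\le t^{-\theta_{1}}\}$ and its complement, with $\theta_{1}$ tuned against $\theta$. On the non-resonant region, the gradient
\[
\nabla_{\sigma}\phi(\xi,\eta,\sigma) = \al\bigl(|\sigma|^{\al-2}\sigma - |\eta+\sigma|^{\al-2}(\eta+\sigma)\bigr) \approx -\al(\al-1)|\sigma|^{\al-2}\eta + O(|\eta|^{2})
\]
is genuinely non-vanishing, and one integrates by parts via $e^{is\phi} = (is|\nabla_{\sigma}\phi|^{2})^{-1}\nabla_{\sigma}\phi\cdot\nabla_{\sigma}e^{is\phi}$, gaining a power of $s^{-1}$ at the cost of one $\sigma$-derivative on $\widehat v(\sigma)$ (absorbed by $\|xv\|_{H^{3}}$); iterating, and performing a symmetric IBP in $\eta$ when $\nabla_{\sigma}\phi$ itself is small, makes this contribution integrable in $s$. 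On the resonant region I Taylor expand $\widehat v(\xi-\eta)\widehat v(\eta+\sigma) = \widehat v(\xi)\widehat v(\sigma) + O(|\eta|\,\|xv\|_{L^\infty})$ and compute the leading oscillatory integral
\[
\int_{\mathbb R^{3}}\frac{e^{is\eta\cdot A}}{|\eta|^{2}}\,d\eta = \frac{2\pi^{2}}{s|A|},\qquad A = \al\bigl(|\sigma|^{\al-2}\sigma - |\xi|^{\al-2}\xi\bigr),
\]
which, after matching constants with $c_{1} = -2(2\pi)^{-5}\lambda$ and using $|\widehat v|=|\widehat u|$, reproduces exactly $i\,(\partial_{t}B_{\al})(t,\xi)\,\widehat v(t,\xi)$.

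Combining the two regions yields the pointwise ODE
\[
\partial_{t}\widehat v(t,\xi) = i(\partial_{t}B_{\al})(t,\xi)\,\widehat v(t,\xi) + \mathcal R(t,\xi),\qquad \|\bxi^{5}\mathcal R(t)\|_{L^\infty} \lesssim M(T)^{3}\bt^{-1-\delta},
\]
for some $\delta > 0$ compatible with the statement. Since $B_{\al}$ is real, multiplying by $e^{-iB_{\al}(t,\xi)}$ gives $\partial_{t}(e^{-iB_{\al}}\widehat v) = e^{-iB_{\al}}\mathcal R$; integration in $t$ together with the integrable bound on $\bxi^{5}\mathcal R$ both closes the bootstrap and, via Cauchy's criterion on $[t,\infty)$, produces $v_{+}$ with the rate \eqref{asympt}. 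The main obstacle is the interplay between the singular factor $|\eta|^{-2}$, the non-smoothness of $|\xi|^{\al}$ near the origin and the degeneracy of $\nabla_{\sigma}\phi$ for small $\sigma$: balancing the IBP gain in the non-resonant region against the size of the Taylor remainder in the resonant region, and controlling the errors coming from $\xi,\sigma\approx 0$ through the weight $\bxi^{5}$ and the frequency cutoff $\varphi(t^{-\theta}\xi)$, is what fixes the exponent $\theta = \tfrac{3\al-5}{40(\al+1)}$ and forces the restriction $\al > \tfrac{17}{10}$.
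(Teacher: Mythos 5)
Your overall architecture — time-decay estimate, weighted energy estimates for $xv$ and $x^2v$, space-time resonance analysis of $\partial_t\widehat v$ with a Taylor expansion in the small-$\eta$ region to extract the phase correction $B_\alpha$, and integration by parts in the complement — is the same as the paper's. However, your bootstrap norm is wrong in a way that would prevent the argument from closing. You define
\[
M(T) = \sup_{[0,T]}\bigl(\|u\|_{H^N} + \|xv\|_{H^3} + \|x^2 v\|_{H^2} + \|\bxi^5\widehat v\|_{L^\infty}\bigr)
\]
and aim to show $M(T)\le 2\epsilon_0 \Rightarrow M(T)\le \epsilon_0 + C\epsilon_0^3$. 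This cannot work: the Sobolev and weighted norms unavoidably \emph{grow} in time. Already the $H^N$ estimate gives $\|u\|_{H^N}\lesssim \epsilon_0 + \int_0^t \|u\|_{L^2}^{4/3}\|u\|_{L^\infty}^{2/3}\|u\|_{H^N}\,ds \lesssim \epsilon_0 + \epsilon_1^3\log t$ under the dispersive bound $\|u\|_{L^\infty}\lesssim\bt^{-3/2}\epsilon_1$, and the $x^2v$ estimate involves $\int_0^t s^2(\ldots)\,ds$ terms that produce a power $\bt^{2\delta_0}$. The paper therefore uses the norm $\Sigma_T$ (Definition~\ref{def:norm}) which inserts the weights $\bt^{-\delta_0}$ in front of $\|u\|_{H^N}$, $\|xv\|_{H^3}$ and $\bt^{-2\delta_0}$ in front of $\|x^2v\|_{H^2}$; only $\|\bxi^5\widehat v\|_{L^\infty}$ is controlled without time growth. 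Without these weights your bootstrap hypothesis $M(T)\le 2\epsilon_0$ is already falsified at some finite time by the logarithmic growth, and the self-improvement step fails.

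There is also a secondary issue worth flagging: you integrate by parts in $\sigma$ via $\nabla_\sigma\phi \approx -\al(\al-1)|\sigma|^{\al-2}\eta$, but on the ``non-resonant'' region this gradient is still proportional to $|\eta|$, so the gain from one IBP is $\sim (s|\eta|)^{-1}|\sigma|^{2-\al}$ — not enough to beat the remaining $|\eta|^{-2}$ singularity and the $s$ weight without a much more delicate case analysis. The paper instead integrates by parts in $\eta$ (after a change of variables, see the identity defining $\mathbf{P}$ in Section 5), splits according to the comparison $|k_1 - k_2|\gtrless 10$, and uses the lower bound of Lemma~\ref{lowbound} to quantify where $\nabla_\eta\phi$ is genuinely nondegenerate; when it degenerates one needs a second IBP and the two-derivative bound $\|\nabla^2\widehat{v_k}\|_{L^2}\lesssim 2^{-k}\langle 2^k\rangle^{-2}s^{2\delta_0}\epsilon_1$. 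Your sketch acknowledges this as the ``main obstacle'' but does not indicate how the balance between $|\eta|^{-2}$, the degeneracy of the phase, and the $\bxi^5$ weight is actually resolved, which is precisely where the restriction $\al>\frac{17}{10}$ and the choice $\theta = \frac{3\al-5}{40(\al+1)}$ are forced.
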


	Our approach is inspired by the work \cite{pu} of Pusateri developed to study semi-relativistic equations. As stated in \cite{pu} we prefer to state asymptotic state in the frequency space because the formulae \eqref{balpha} and \eqref{asympt} appear explicitly in the proof. We show the uniform norm estimate \eqref{uniform} and asymptotic behavior \eqref{asympt} for sufficiently small initial data via  refined time-decay and weighted energy estimates in frequency space, which are rephrased as Proposition \ref{prop1} and Propositions \ref{scattering}, \ref{Linfty}, respectively.
	
	%To address the non-integrable(in time) nonlinear term, we show in Section 3 the $L_x^\infty$-time decay estimates (Proposition \ref{prop1} and Corollary \ref{ineq:W2}) which are not obtained straightforwardly from %sharp in the sense that the time decay rate of solution of $\eqref{equation}$ is same as that of linear equation. One hardly derive the $L_{x}^{\infty}$-time decay just by well-known standard $L^{p}-L^{q}$ linear estimate \cite{cox}. We introduce an auxiliary norm $\Sigma_T$ which comprises weighted Sobolev norms.% The high Sobolev norm is an inevitable trade for the time decay to avoid the possible singularity . This approach has already been considered over several papers on other critical dispersive equation:\cite{hn3,pu}.

	Let us briefly give some intuition for the formula \eqref{balpha}. For simplicity we assume that $|\xi|\sim1$ and $|\eta|\lesssim 2^{n}$ with $n < 0$. Applying the Taylor expansion to the phase function $\phi$ of \eqref{Duhamel}, we approximate $I$ by
	\begin{align*}
	ic_1
	&\iint_{\mathbb{R}^{3}\times\mathbb{R}^{3}}
	e^{i \alpha s (\eta \cdot \mathbf z)}
	\vert\eta\vert^{-2}
	\widehat{v}(s,\xi+\eta)
	\overline{\widehat{v}(}s,\xi+\eta+\sigma)
	\widehat{v}(s,\xi+\sigma)
	d\eta d\sigma
	\\
	&\qquad =ic_1
	\widehat{v}(s,\xi)
	\int_{\mathbb{R}^{3}}
	\mathcal{F}^{-1}(\vert\eta\vert^{-2})(s\mathbf z)
	|\widehat{v}(s,\sigma)|^{2}
	d\sigma + [\mathbf{ err}],
	\end{align*}
	where $\mathbf z = \frac{\xi}{\vert\xi\vert^{2-\alpha}} - \frac{\sigma}{\vert\sigma\vert^{2-\alpha}}$ and $c_1\mathcal F^{-1}(|\eta|^{-2}) = -\lambda(2\pi)^{-3}|x|^{-1}$. This formula yields an insight for \eqref{balpha} and \eqref{asympt}. In Section 5 below $[\mathbf{err}]$ will turn out to be $O(s^{-1-})$ as $s \to \infty$. The contribution of remaining region $|\eta|\gtrsim 2^{n}$ is shown to decay faster than $s^{-1}$ by making integration by parts twice.

	Similarly to the case of short-range potential with $\gamma$ close to $1$, we could not obtain a modified scattering in the whole range $1 <\alpha < 2$ for the present. This is due to the lack of smoothness of $|\nabla|^\al$ near zero frequency. This is the main difference of fractional  from the usual Schr\"odinger or semi-relativistic equations. The drawback can be overcome by the refined time-decay estimate. But it is inevitable to control at least the $L^{2}$ norm of $x^{2}v$ for the requested time decay. Among the terms from taking $\nabla_{\xi}^{2}$ to $\widehat{v}$ and $I$ in \eqref{integraleqn}, the following is the most worst case when we consider small $\alpha$:
	$$
	c_0 \int_{0}^{t}
	s
	\int_{\mathbb{R}^{3}}
	[\nabla_\xi\otimes\nabla_\xi \phi_\al]
	e^{is\phi_\al(\xi, \eta)}
	\vert\eta\vert^{-2}
	\widehat{\vert u\vert^{2}}(\eta)
	\widehat{v}(\xi-\eta)
	d\eta
	ds.
	$$
	In this expression, twice differentiation of the phase function $\phi_\al$ rises to singularity near $0$ of order $\alpha-2$, which makes a problem in bounding the low frequency part when $\al$ becomes closer to $1$. In the derivation of the asymptotic correction term as above on the region $|\eta|\lesssim 2^{n}$, we require the integral of $|\eta|^{-2+\alpha}$ over this region to be $O(s^{-2-})$. This condition also restricts the range of $\alpha$. One of our next subjects will be to remove the gap on $\alpha$.
	
	\begin{rk}
		The indices $\al, N, \delta, \theta$ are not sharp and can be adjusted. For example, the range of $\al$ can be made slightly less than $\frac{17}{10}$ by defining $\theta = \frac{3\al-5}{1000(\al+1)}$. See conditions on the indices in Propositions \ref{scattering} and \ref{Linfty}.
	\end{rk}
	\begin{rk}
		One can obtain similar refined time-decay estimate for high dimensional case such that
		\begin{align*}
		\| e^{-it\left\vert \nabla\right\vert^{\alpha}}v\|_{L^\infty(\mathbb{R}^{d})}
		&\lesssim
		\bt^{-\frac{d}{2}}\| \bxi^{a}\widehat{v}\|_{L^{\infty}(\mathbb{R}^{d})}\\
		&\qquad\qquad+\bt^{-\frac{3}{2}-\delta}
		\big(\| x^{[\frac{d}{2}]+1}v \|_{L^{2}(\mathbb{R}^{d})}+\| v\|_{H^{N}(\mathbb{R}^{d})}\big),
		\end{align*}
		for some $a(\alpha, d) > 1$ and $\delta(\alpha,d) \ll 1$ and for sufficiently large $N(a,\delta) $. It is highly expected to extend Theorem \ref{main thm} to the high dimensional case $(d \ge 4)$ even though there will be much more complexity in frequency space analysis arising from the $[d/2]+1$-times differentiation of the phase function $\phi$.
	\end{rk}

	This paper is organized as follows: In Section 2, we deal with the global well-posedness and evolution of $xv, x^2v$ and $\bxi^5 \widehat v$. Section 3 is devoted to proving the refined time-decay estimate. In Section 4, we establish the weighted energy estimate based on the Littlewood-Paley theory. Main effort is made to overcome the singularity from differentiation. In Section 5 we move on to the last step for the proof of modified scattering.  In the last section we list lemmas for multiplier estimates and  bounds for $|\mathbf z|$.

	{\it Notation}. We conclude this introduction by giving some notations which will be used frequently throughout this paper.
	
	\noindent$\bullet$ Littlewood-Paley operators: $\beta \in C_{0, rad}^\infty$ with $\beta \widetilde{\beta} = \beta$ and $\widetilde{\beta}(\xi) = \beta(\xi/2) + \beta(\xi) + \beta(2\xi)$. $\mathcal F ( P_k f)(\xi) = \beta(\xi/2^k)\widehat{v}$ for any $k \in \mathbb Z$. Let $\widetilde{P}_k = P_{k-1} + P_k + P_{k+1}$. Then $P_k\widetilde{P}_k = P_k$.

	\noindent$\bullet$ For any $b \in \mathbb R$ we use $\langle b \rangle = (1 + b^2)^\frac12$ and also $\langle x \rangle = (1 + |x|^2)^\frac12$ for any $x \in \mathbb R^d$.
	
	\noindent$\bullet$ Fractional derivatives: $|\nabla|^s = (-\Delta)^\frac{s}2 = \mathcal F^{-1}|\xi|^s\mathcal F$, $(1- \Delta)^\frac{s}2 = \mathcal F^{-1}\bxi^s \mathcal F$ for $s > 0$.
	
	\noindent $\bullet$ Let $\mathbf A = (A_i)$ and $\mathbf B = (B_j)$ be any vectors in $\mathbb R^d$. Then $\mathbf A \otimes \mathbf B$ denotes the usual tensor product such that
	$(A\otimes B)_{ij} = A_i B_j$. The same notation is used for the derivatives, i.e. $\nabla \otimes \nabla = (\partial_i\partial_j)_{i,j = 1,\cdots, d}$. We also use $\nabla \otimes x$, $x\otimes \nabla$.
	
	\noindent $\bullet$ For any positive integer $\ell$, and for any vector or derivative $\mathbf A$, $\mathbf A^\ell$ denotes the $\ell$-times product $\mathbf A \otimes \cdots \otimes \mathbf A$.
	
	\noindent $\bullet$ Let $\mathbf T = (T_{j_1, \cdots, j_k}), \mathbf S = (S_{i_1, \cdots, i_l})$ be $k$-times and $l$-times product of tensors and derivatives. Then we define their dot product by
	$$\mathbf T ; \mathbf S := \sum_{\substack{1\le j_1, \cdots, j_k\le d\\
			1\le i_1, \cdots, i_l\le d}}T_{j_1,\cdots,j_k}S_{i_1,\cdots, i_l}.$$

	\noindent $\bullet$ For tensor-valued function $\mathbf{F} = (F_{i_1, \cdots i_l})$ we use the norm $$\|\mathbf{F}\|_{X} := \sum_{1\le i_1,\cdots, i_l \le d} \|F_{i_1, \cdots, i_l}\|_X.$$

	\noindent$\bullet$ As usual, different positive
	constants depending only on $d, \al$ are denoted by the same letter $C$, if not specified. $A \lesssim B$ and $A \gtrsim B$ mean that $A \le CB$ and
	$A \ge C^{-1}B$, respectively for some $C>0$. $A \sim B$ means that $A \lesssim B$ and $A \gtrsim B$.

	\section{Global Well-posedness}
	In this section we establish a global theory.
	\begin{thm}\label{GW}
		$(1)$ Let $1 < \al < 2$ and $N \ge \frac\al2$. If $u_0 \in H^N$, then there exists a unique solution $u \in C(\mathbb R; H^N)$ satisfying mass and energy conservations.\\
		$(2)$ Let $1 < \al < 2$ and $N \ge 5$. Assume that
		$$
		u_{0} \in H^N, x u_0 \in H^3, x^2 u_0 \in H^2.
		$$
		Then there exists a unique solution $u$ to \eqref{equation} such that
		\begin{align*}
		u \in C(\mathbb R; H^N), \partial_t u \in C(\mathbb R; H^{N-\al}), x v \in C(\mathbb R; H^3), x^2 v \in C(\mathbb R; H^2).
		\end{align*}
		Moreover, if  we further assume that $\bxi^5\widehat u_0 \in L^\infty$, then $\bxi^5 \widehat v \in C(\mathbb R; C_b(\mathbb R^3))$.
	\end{thm}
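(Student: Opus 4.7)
For part (1), the plan is a standard contraction-plus-persistence argument. First I would set up local well-posedness in $H^N$ by viewing the Duhamel formula \eqref{integral} as a fixed-point problem on $C([0,T]; H^N)$ equipped with the $L^\infty_t H^N_x$ norm. The linear propagator $\sa$ is unitary on $H^N$, and the Hartree nonlinearity obeys, via the fractional product rule together with Hardy--Littlewood--Sobolev,
\[
\|(|x|^{-1}*|u|^2)\,u\|_{H^N} \lesssim \|u\|_{H^1}^2 \,\|u\|_{H^N},
\]
which yields a unique local solution whose lifespan depends only on $\|u_0\|_{H^N}$. To globalize, I would invoke the conservation laws \eqref{consv}: the mass controls $\|u\|_{L^2}$, while Hardy--Littlewood--Sobolev applied to the potential energy together with interpolation converts $E(u)$ into a bound for $\||\nabla|^{\alpha/2}u\|_{L^2}$, valid whenever $N\ge \alpha/2$. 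A Gronwall argument on $\|u(t)\|_{H^N}$ inside the Duhamel formula then rules out blow-up and gives the global $H^N$ persistence.

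For part (2), I would exploit the vector field $\mathbf J = \sa\, x\, e^{it\na}$ introduced in the introduction, for which $\|xv\|_{H^3} = \|\mathbf J u\|_{H^3}$ and $\|x^2 v\|_{H^2} = \|\mathbf J^2 u\|_{H^2}$ since $\sa$ is unitary on Sobolev spaces. Because $\mathbf J$ commutes with $i\partial_t - \na$, applying it to \eqref{equation} yields
\[
(i\partial_t - \na)\,\mathbf J u \;=\; \mathbf J\bigl[(|x|^{-1}*|u|^2)\,u\bigr],
\]
and a product-rule computation expresses the right-hand side as a sum of $(|x|^{-1}*|u|^2)\,\mathbf J u$, $(|x|^{-1}*2\,\mathrm{Re}(\overline{u}\,\mathbf J u))\,u$, and commutator remainders in which $|\nabla|^{\alpha-2}\nabla$ lands on the convolution. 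Each factor is then estimated in $H^3$ via Hardy--Littlewood--Sobolev together with the already-global $H^N$ bound from part (1), producing a linear integral inequality for $\|\mathbf J u(t)\|_{H^3}$ whose Gronwall solution yields $xv \in C(\mathbb R; H^3)$. A second application of $\mathbf J$ delivers $x^2 v \in C(\mathbb R; H^2)$ by the same template. The equation for $\partial_t u$ then gives $\partial_t u \in C(\mathbb R; H^{N-\alpha})$ directly. For the frequency $L^\infty$ statement, I would work pointwise with \eqref{integraleqn}: it suffices to bound $\|\bxi^5 I(s,\cdot)\|_{L^\infty_\xi}$ by an integrable-in-$s$ function, which follows from distributing $\bxi^5$ across the two copies of $\widehat v$ in \eqref{Duhamel}, using the $H^N$ and the bootstrapped $\bxi^5 L^\infty$ control of $\widehat v$, together with the $L^1_\eta$ bound $\||\eta|^{-2}\widehat{|u|^2}\|_{L^1_\eta} \lesssim \|u\|_{H^1}^2$ obtained from Hardy--Littlewood--Sobolev in the dual variable.

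The main technical obstacle is the commutator analysis for $\mathbf J$, because the nonlocal operator $|\nabla|^{\alpha-2}\nabla$ inside $\mathbf J$ has a symbol singular at zero frequency when $\alpha<2$, so care is required where it falls on the Hartree convolution. This I would handle by a Littlewood--Paley split into low and high frequencies, using that $|\xi|^{\alpha-1}$ is locally integrable in three dimensions together with standard multiplier bounds for the Hartree kernel. Since at this stage one only needs qualitative persistence of regularity (not the sharp $t$-growth, which is deferred to Sections 3--4), the crude polynomial bound $\|xv(t)\|_{H^3} + \|x^2 v(t)\|_{H^2} + \|\bxi^5\widehat v(t)\|_{L^\infty} \lesssim \langle t\rangle^{C}$ is enough to close the estimates and produce the claimed continuous-in-time regularity.
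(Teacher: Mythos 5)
Your overall architecture (local theory in $H^N$ $+$ conservation laws for part (1); weighted estimates and a frequency-space Gronwall for part (2)) matches the paper's, but there are two places where the argument as written does not close. First, in part (1) you propose to globalize via a Gronwall inequality with coefficient $\|u\|_{H^1}^2$. The conservation laws only control $\|u\|_{H^{\al/2}}$ with $\al/2<1$, so $\int_0^T\|u\|_{H^1}^2\,ds$ is not a priori finite and the Gronwall argument is circular ($H^1\subset H^N$ is part of what you are trying to bound). The fix is to use the sharper Hardy/HLS bound $\||x|^{-1}*|u|^2\|_{L^\infty}\lesssim\|u\|_{\dot H^{1/2}}^2$, since $\|u\|_{\dot H^{1/2}}\le\|u\|_{L^2}^{(\al-1)/\al}\||\nabla|^{\al/2}u\|_{L^2}^{1/\al}$ is controlled by mass and energy (this $\dot H^{1/2}$ control norm is exactly the point the paper makes, including a Young-inequality step to absorb the potential energy when $\lambda<0$). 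Second, in part (2) your Gronwall on $\|\mathbf J u(t)\|_{H^3}$ presupposes that this quantity is finite for $t>0$, which is precisely what is to be proved; the formal differentiation of $\|x^2v\|_{H^2}^2$ is not justified without a regularization. The paper devotes Lemma \ref{moment} to exactly this: it inserts the Gaussian cutoff $\psi_\varepsilon=e^{-\varepsilon|x|^2}$ into the moment functional, closes the estimates uniformly in $\varepsilon$, and passes to the limit by Fatou. Your proposal needs an analogous approximation step.

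Beyond these gaps, your route for part (2) is genuinely different from the paper's and worth comparing. You apply $\mathbf J$ to the equation and estimate the commutator of $|\nabla|^{\al-2}\nabla$ with the Hartree potential by Littlewood--Paley, whereas the paper first writes $xv=\al t|\nabla|^{\al-2}\nabla u+e^{it\na}xu$ (and similarly for $x^2v$), uses Hardy--Sobolev to absorb the $|\nabla|^{\al-2}$ terms, and thereby reduces everything to the moments $xu\in H^3$, $x^2u\in H^2$ of the solution itself; the only nonlocal commutator it then has to control is $[|\nabla|^\al(1-\Delta)^{-1},|x|^\ell\psi_\varepsilon]$, which is handled by the integrable kernel of $|\nabla|^\al(1-\Delta)^{-1}$ rather than by frequency decomposition. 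The paper's reduction avoids the low-frequency singularity of $|\xi|^{\al-2}\xi$ that you correctly identify as the main obstacle in your version. Your treatment of the $\bxi^5\widehat v$ bound coincides with the paper's.
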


	\begin{proof}

		For the proof of the global well-posedness in $H^N$ and conservations, we refer the readers to Theorem 3.3 of \cite{chho}. The control norm of well-posedness is $\|u\|_{\dot H^\frac12}$. That is, if $\|u\|_{\dot H^\frac12}$ is finite at some time, then the solution evolves beyond the time in $H^N$. Regardless of the sign of $\lambda$, one can show that the control norm is uniformly bounded in time due to the subcritical nature of \eqref{equation}. In fact, if $\lambda < 0$, then by interpolation and conservation laws we have
		\begin{align*}
		\|u\|_{\dot H^\frac12} \le \|u\|_{L^2}^\frac{\al-1}\al\||\nabla|^\frac\al2 u\|_{L^2}^\frac1\al \le 2^\frac1{2\al} m(u_0)^\frac{\al-1}{2\al}\big(|E(u_0)| - V(u)\big)^\frac1{2\al},
		\end{align*}
		where $V(u) = -\frac{\lambda}4\iint |x-y|^{-1}|u(x)|^2|u(y)|^2\,dxdy$. Hardy-Sobolev inequality gives $-V(u) \le Cm(u_0)\|u\|_{\dot H^\frac12}^2$ and hence
		$$
		\|u\|_{\dot H^\frac12} \le 2^\frac1{2\al}m(u_0)^\frac{\al-1}{2\al}|E(u_0)|^\frac1{2\al} + (2C)^\frac1{2\al} m(u_0)^\frac12\|u\|_{\dot H^\frac12}^\frac1\al.
		$$
		Now Young's inequality yields $\|u\|_{\dot H^\frac12} \le C(m(u_0), E(u_0))$.
		
		By direct calculation we get $xv = \al t |\nabla|^{\al-2}\nabla u + e^{it\nabla|^\al}xu$ and
		$$
		x^2v = \al t((\al-2)|\nabla|^{\al-4}\nabla^2 + |\nabla|^{\al-2}I)u + \al t|\nabla|^{\al-2}\nabla \otimes x u + e^{it|\nabla|^\al} x^2u.
		$$
		Since $\||\nabla|^{\al-2}u\|_{L^2} \lesssim \||x|^{2-\al}u\|_{L^2}$ by Hardy-Sobolev inequality, to show that $x v \in C(\mathbb R; H^3)$, $x^2 v \in C(\mathbb R; H^2)$ we have only to take into account $x u(t) \in C(\mathbb R; H^3)$, $x^2u(t) \in C(\mathbb R; H^2)$.
		But this can be done by the standard approximation with $\psi_\ep(x) = e^{-\ep|x|^2}$. We treat this part in Lemma \ref{moment} below.
		
		Lastly, we show that $\bxi^5\widehat{v} \in C(\mathbb R; C_b(\mathbb R^3))$.
		Since $x^2v \in L^2$, $\widehat{v} \in C_b(\mathbb R^3)$. We show the time continuity on $[0, \infty)$. The continuity on the negative time interval can be shown by symmetry. From \eqref{integraleqn} it follows that
		\begin{align*}
		&\|\bxi^5 \widehat{v}(t) \|_{L_\xi^{\infty}}\\
		&\lesssim \|\bxi^5 \widehat{u}_0 \|_{L_\xi^{\infty}} +
		\int_0^{t}
		\big\| \langle\xi\rangle^{5}
		\int_{\mathbb{R}^{3}}
		e^{is(|\xi|^{\alpha}-|\xi-\eta|^{\alpha})}
		|\eta|^{-2}
		\widehat{|u|^{2}}(\eta)\widehat{v}(\xi-\eta)
		d\eta
		\big\|_{L_{\xi}^{\infty}}\,ds \\
		&\lesssim  \|\bxi^5 \widehat{u}_0 \|_{L_\xi^{\infty}} +
		\int_0^{t}
		\big\|
		\int_{\mathbb{R}^{3}}
		|\eta|^{-2}
		|\eta|^{5}
		\widehat{|u|^{2}}(\eta)
		\widehat{v}(\xi-\eta)
		d\eta
		\big\|_{L_{\xi}^{\infty}}ds  	\\
		&\qquad\qquad\qquad\;\, +\int_0^{t}
		\big\|
		\int_{\mathbb{R}^{3}}
		|\eta|^{-2}
		\widehat{|u|^{2}}(\eta)
		\langle\xi-\eta\rangle^{5}
		\widehat{v}(\xi-\eta)
		d\eta
		\big\|_{L_{\xi}^{\infty}}ds  	\\
		&\lesssim  \|\bxi^5 \widehat{u}_0 \|_{L_\xi^{\infty}} +
		\int_0^t\|\widehat{v}\|_{L_\xi^2}\|\widehat{\nabla^3 |u|^2}\|_{L_\xi^2}\,ds +
		\int_0^{t}\|\langle\xi\rangle^{5}
		\widehat{v}\|_{L_\xi^\infty}
		\big\| |\eta|^{-2}
		\widehat{|u|^{2}}(\eta)
		\big\|_{L_{\eta}^{1}}ds.
		\end{align*}
		Now for the second integral we use the estimate
		$$\int |\eta|^{-2}|\widehat{|u|^2}|\,d\eta = \int_{|\eta|\le 1} + \int_{|\eta|> 1} \lesssim \|u\|_{L^2}^2 + \|u\|_{L^4}^2$$
		and get
		\begin{align}\label{gronwal}
		\|\bxi^5 \widehat{v}(t) \|_{L_\xi^{\infty}} \lesssim  \|\bxi^5 \widehat{u}_0 \|_{L_\xi^{\infty}} + \int_0^t\|u\|_{H^3}^3\,ds +
		\int_{0}^t
		\|\langle\xi\rangle^{5}
		\widehat{v}\|_{L_\xi^\infty}\|u\|_{H^1}^2\, ds.
		\end{align}
		Therefore by Gronwal's inequality we obtain that for each $t > 0$
		$$
		\|\bxi^5 \widehat{v}(t) \|_{L_\xi^{\infty}} \lesssim \left(\|\bxi^5 \widehat{u}_0 \|_{L_\xi^{\infty}} + \int_0^t\|u\|_{H^3}^3\,ds\right)e^{C\int_0^t \|u\|_{H^1}^2\,ds} < \infty.
		$$
		Then time continuity readily follows from considering the inequality
		\begin{align*}
		\|\bxi^5(\widehat{v}(t) - \widehat{v}(t'))\|_{L^\infty} \lesssim \int_{t'}^t\left(\|u\|_{H^3}^3 + \|\langle\xi\rangle^{5}
		\widehat{v}\|_{L_\xi^\infty}\|u\|_{H^1}^2\right)\,ds
		\end{align*}
		for any $0 \le t' < t$.
		This completes the proof of Theorem \ref{GW}.
		
	\end{proof}
	
	\newcommand{\mm}{\mathcal{M}}
	\newcommand{\wmm}{\widetilde{\mathbf{m}}}
	\newcommand{\pe}{\psi_{\varepsilon}}
	\newcommand{\bv}{{\mathbf u}}
	\newcommand{\tmm}{{\widetilde{\mathcal M}}}
	\newcommand{\tv}{{\widetilde u}}
	
	\begin{lm}\label{moment}
		Let $u $ be the solution to \eqref{equation} belong to $C(\mathbb R; H^5)$ with $\partial_t u \in C(\mathbb R; H^{5-\al})$ and initial data $u_0$ such that $xu_0 \in H^3, x^2u_0 \in H^2$. Then $x u \in C(\mathbb R; H^3)$, $x^2 u \in C(\mathbb R; H^2)$.
	\end{lm}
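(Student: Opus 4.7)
The plan is a standard truncation-limit argument based on the Schwartz cutoff $\psi_\ep(x)=e^{-\ep|x|^2}$, $\ep\in(0,1]$, as suggested in the preceding proof. Set $\mathbf w_\ep(t,x):=x\psi_\ep(x)\,u(t,x)$ and $\mathbf W_\ep(t,x):=(x\otimes x)\psi_\ep(x)\,u(t,x)$; since $x^j\psi_\ep\in\mathcal S(\mathbb R^3)$ for $j=1,2$, both objects lie in $C(\mathbb R; H^5)$ a priori, so every computation below is rigorous. The key properties of the cutoff are $\psi_\ep\to 1$ pointwise and the uniform bound $\|x^j\partial_x^k\psi_\ep\|_{L^\infty}\lesssim 1$ whenever $j\le k$.

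First I would derive the PDE for $\mathbf w_\ep$. From $i\partial_t u = |\nabla|^\al u+(V*|u|^2)u$ and the commutator identity $[x,|\nabla|^\al]=-\al|\nabla|^{\al-2}\nabla$, one arrives at the decomposition
\begin{equation*}
[x\psi_\ep,|\nabla|^\al]u \;=\; x[\psi_\ep,|\nabla|^\al]u \;-\; \al|\nabla|^{\al-2}\nabla(\psi_\ep u),
\end{equation*}
and hence
\begin{equation*}
i\partial_t \mathbf w_\ep \;=\; |\nabla|^\al\mathbf w_\ep \;+\; x[\psi_\ep,|\nabla|^\al]u \;-\; \al|\nabla|^{\al-2}\nabla(\psi_\ep u) \;+\; (V*|u|^2)\mathbf w_\ep.
\end{equation*}
A parallel identity for $\mathbf W_\ep$ produces additional lower-order pieces of the form $|\nabla|^{\al-4}\nabla^2(\psi_\ep u)$, $|\nabla|^{\al-2}(\psi_\ep u)$, and $|\nabla|^{\al-2}\nabla\otimes \mathbf w_\ep$ coming from the double commutator $[(x\otimes x)\psi_\ep,|\nabla|^\al]u$.

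The heart of the argument is a uniform-in-$\ep$ bound on these remainders. The term $|\nabla|^{\al-2}\nabla(\psi_\ep u)=|\nabla|^{\al-1}\mathbf R(\psi_\ep u)$ (with $\mathbf R$ the Riesz transform) is immediate in $H^3$ with constant $\lesssim\|u\|_{H^{\al+2}}\le\|u\|_{H^5}$ since $\al-1\in(0,1)$. For the delicate commutator $x[\psi_\ep,|\nabla|^\al]u$, I would follow the commutator strategy of \cite{c0, chooz2, bhl} and invoke the Balakrishnan representation of $|\nabla|^\al$ as a $\lambda$-integral of resolvents $(\lambda-\Delta)^{-1}$; commutators with the local operator $\lambda-\Delta$ are then handled by the Leibniz rule, and the weight $x$ is absorbed using $\|x^j\partial^k\psi_\ep\|_{L^\infty}\lesssim 1$. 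This yields $\|x[\psi_\ep,|\nabla|^\al]u\|_{H^3}\lesssim\|u\|_{H^5}$ uniformly in $\ep$. For the $\mathbf W_\ep$ equation, the singular operators $|\nabla|^{\al-2}$ and $|\nabla|^{\al-4}\nabla^2$, which blow up at zero frequency, are handled via the Hardy-Sobolev bound $\||\nabla|^{\al-2}f\|_{L^2}\lesssim\||x|^{2-\al}f\|_{L^2}$ applied to $f=\psi_\ep u$ and its Riesz transforms; the piece $|\nabla|^{\al-2}\nabla\otimes \mathbf w_\ep$ is controlled in $H^2$ by $\|\mathbf w_\ep\|_{H^3}$ after the bound for $\mathbf w_\ep$ has been established.

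With these uniform commutator bounds in hand, the energy identity together with Hardy-Littlewood-Sobolev yields
\begin{equation*}
\tfrac{d}{dt}\|\mathbf w_\ep(t)\|_{H^3} \;\lesssim\; \|u(t)\|_{H^5} \;+\; C(\|u\|_{H^5})\,\|\mathbf w_\ep(t)\|_{H^3}.
\end{equation*}
Gronwall combined with the global $H^N$-bound from Theorem \ref{GW}(1) produces an $\ep$-independent bound on $\sup_{|t|\le T}\|\mathbf w_\ep(t)\|_{H^3}$ for every $T>0$. Weak-$\ast$ compactness and Fatou then give $xu\in L^\infty_{\mathrm{loc}} H^3$, and strong continuity in time is recovered from the Duhamel form of the equation for $xu$, whose right-hand side now makes sense in $H^3$ with norm continuous in $t$. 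The same scheme, iterated on $\mathbf W_\ep$, delivers $x^2u\in C(\mathbb R; H^2)$. The main obstacle throughout is the uniform commutator estimate for $x[\psi_\ep,|\nabla|^\al]$: the non-smoothness of $|\nabla|^\al$ at zero frequency, combined with the unbounded weight, blocks naive symbol-calculus arguments, and the Balakrishnan representation is the essential device for circumventing it.
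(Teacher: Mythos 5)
Your proposal follows the same overall architecture as the paper's proof — a Gaussian cutoff $\psi_\ep$, uniform-in-$\ep$ commutator bounds, a Gronwall differential inequality, Fatou to pass to the limit, and then the equation to recover time continuity — so the strategy is sound and, at that level, identical. The genuine point of divergence is the technical device used to tame the non-local, non-smooth operator $|\nabla|^\al$ in the weighted commutator. You invoke the Balakrishnan resolvent representation of $|\nabla|^\al$, which is indeed the tool the authors use elsewhere for commutators with the vector field $\mathbf J$ (and cite from \cite{c0, chooz2, bhl}). The paper's proof of this lemma, by contrast, does \emph{not} use Balakrishnan here; instead it factorizes $|\nabla|^\al = \bigl[|\nabla|^\al(1-\Delta)^{-1}\bigr](1-\Delta)$, peels off the local Leibniz part from $(1-\Delta)$, and controls the remaining commutator $[\,|\nabla|^\al(1-\Delta)^{-1},\,|x|\psi_\ep\,]$ directly via the decay of the convolution kernel $K$ of $|\nabla|^\al(1-\Delta)^{-1}$ (citing \cite{st} for the integrability of $(1+|x|)^N K$). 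This single factorization is somewhat leaner than the $\lambda$-integral of resolvents: there is no parameter integral to optimize, and the weight is absorbed in one stroke via $\int K(x-y)|x-y|\,dy < \infty$. Your Balakrishnan route is plausible and should close, but it pushes the same weight-absorption problem down to each resolvent $(\lambda-\Delta)^{-1}$, where one must commute $x$ through the resolvent and then integrate in $\lambda$, which is more bookkeeping than the paper's one-shot kernel estimate.

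A second, minor stylistic difference: you work with the vector quantities $\mathbf w_\ep = x\psi_\ep u$, $\mathbf W_\ep = (x\otimes x)\psi_\ep u$ and their evolution equations, whereas the paper differentiates $u$ first, sets $\mathbf u = \nabla^\ell u$, and tracks the scalar energies $\mathcal M_{\ell,\ep} = \int \mathbf u : \overline{|x|^2\psi_\ep^2\,\mathbf u}\,dx$ for $0\le\ell\le 3$, summing at the end. The two bookkeepings are equivalent modulo harmless commutators between $\nabla$ and the weight, and both lead to a differential inequality of the form $\frac{d}{dt}\sqrt{\mathcal M_{\ell,\ep}}\lesssim \|u\|_{H^{\ell+2}} + (\text{lower order weights})$; the paper's scalar choice just avoids writing a full evolution equation for the weighted function. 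One small caution in your sketch: for the second moment you still have a term like $(x\otimes x)[\psi_\ep,|\nabla|^\al]u$ to estimate uniformly, and you will need both the first-moment bound already established and the Hardy--Sobolev inequality for the $|\nabla|^{\al-2}$, $|\nabla|^{\al-4}\nabla^2$ pieces, which is exactly the induction structure the paper makes explicit. You gesture at this but should make the order of closure precise before considering the argument complete.
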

	\begin{proof}[Proof of Lemma \ref{moment}]
		Let us set $\pe(x) = e^{-\varepsilon|x|^2}$. Let $\mathbf u = \nabla^\ell u$ for $0 \le \ell \le 3$, and
		$$
		\mm_{\ell, \varepsilon}(t) = \int \bv(t) : \overline{|x|^{2} \pe^2
			\bv(t)}\,dx.
		$$
		From the regularity of the solution $u$ it follows that
		\begin{align}\begin{aligned}\label{diff-m}
		\frac{d}{dt}\mm_{\ell, \varepsilon}(t) &= 2{\rm Im} \int \bv : \overline{[|\nabla |^\alpha, |x|^2 \pe^2] \bv}\,dx \\
		&\qquad\qquad+ 2\lambda{\rm Im} \int |x| \pe \bv : \overline{|x| \pe \nabla^\ell((|x|^{-1}*|u|^2)u)}\,dx\\
		& =: 2(\,A_\ell + B_\ell\,).
		\end{aligned}\end{align}
		We rewrite $A_\ell$ as
		\begin{align*}
		A_\ell &= {\rm Im }\int |x| \pe \bv : [|\nabla|^\alpha (1-\Delta)^{-1}, |x| \pe] \overline{(1-\Delta) \bv}\,dx\\
		&\qquad + {\rm Im}\int |\nabla|^\alpha (1-\Delta)^{-1}(|x| \pe \bv) : \overline{[1-\Delta, |x| \pe] \bv}\,dx =: A_{\ell, 1} + A_{\ell, 2}.
		\end{align*}
		Here $[T, S]$ denotes the commutator $TS - ST$.
		By the kernel representation of $|\nabla|^\alpha (1-\Delta)^{-1}$, we  have
		\begin{align*}
		|[|&|\nabla|^\al (1-\Delta)^{-1}, |x| \pe|](1-\Delta)\bv (x)|\\
		&\le \int K(x-y)||x| \pe(x)- |y| \pe(y)||(1-\Delta)\bv u(y)|\,dy\\
		&\lesssim \int K(x-y)|x-y| |(1-\Delta)\bv(y)|\,dy.
		\end{align*}
		Since $(1+|x|)^N K$ is integrable for all $N \ge 1$ (see \cite{st}, Cauchy-Schwarz inequality gives
		$$
		A_{\ell, 1} \lesssim \sqrt{\mm_{\ell, \varepsilon}}\,\|u\|_{H^{\ell+2}}.
		$$
		As for $A_{\ell, 2}$
		we have
		\begin{align*}
		A_{\ell, 2} &=  -{\rm Im}\int |\nabla|^\alpha(1-\Delta)^{-1}(|x| \pe \bv) :
		\overline{\big(\Delta( |x| \pe) \bv + 2\nabla(|x|\pe ) \cdot \nabla \bv \big)}\,dx\\
		&\lesssim \sqrt{\mm_{\ell, \ep}}\,\|u\|_{H^{\ell+1}}.
		\end{align*}

		Now we proceed to estimate $B$. If $\bv = u$, then $B = 0$. For the case $\bv = \nabla^\ell u$ with $\ell > 0$ let us observe that
		\begin{align*}
		B_\ell &\lesssim  \sqrt{\mm_{\ell, \varepsilon}}\,\sum_{1 \le \ell' \le \ell}\||x| \nabla^{\ell'}(|x|^{-1}*|u|^2)\|_{L^\infty}\|\nabla^{\ell-\ell'} u\|_{L^2}\\
		&\lesssim  \sqrt{\mm_{\ell, \varepsilon}}\,\sum_{1 \le \ell' \le \ell} \||x| (|x|^{-1}* \nabla^{\ell'} (|u|^2)\|_{L^\infty}\|\nabla^{\ell-\ell'} u\|_{L^2}.
		\end{align*}
		By Young's and Hardy-Sobolev inequalities we have
		\begin{align*}
		\left||x| (|x|^{-1}*\nabla^{\ell'} (|u|^2))\right| &\le \int \nabla^{\ell'}(|u|^2) \,dy + \int |x-y|^{-1}|y|\nabla^{\ell'}(|u|^2)\,dy\\
		&\lesssim \|xu\|_{H^{\ell'-1}}^2 + \|u\|_{H^{\ell'+1}}^2.
		\end{align*}

		By integrating \eqref{diff-m} over $[0, t]$ we have
		\begin{align*}
		\sqrt{\mm_{0, \ep}} &\lesssim \|xu_0\|_{L^2} +\int_0^t \|u\|_{H^2}\,ds,\\
		\sqrt{\mm_{\ell, \ep}} &\lesssim \|xu_0\|_{H^\ell} + \int_0^t (\|u\|_{H^{\ell+2}} + \|u\|_{H^{\ell+1}}^3 + \|xu\|_{H^{\ell-1}}^2\|u\|_{H^{\ell-1}} )\,ds,
		\end{align*}
		Fatou's lemma and induction lead us to $xu(t) \in L^\infty(K; H^3)$ for any compact interval $K \subset \mathbb R$ provided $xu_0 \in H^3$. By using this fact and the equation \eqref{equation} we can conclude that $xu \in C(\mathbb R; H^3)$.

		Let us move onto the proof of $x^2 u \in C(\mathbb R; H^2)$. Let us set $\tv = (1-\Delta) u$ and
		$$
		\tmm_\varepsilon(t) = \int \tv(t) \overline{|x|^{4} \pe^2 \tv(t)}\,dx.
		$$
		Differentiating w.r.t $t$ we have as above that
		\begin{align*}
		\frac{d}{dt}\tmm_\varepsilon(t) &= 2{\rm Im} \int \tv  \overline{[|\nabla |^\alpha, |x|^4 \pe^2] \tv}\,dx\\
		&\qquad\qquad\qquad+ 2\lambda{\rm Im} \int |x|^2 \pe \tv  \overline{|x|^2 \pe (1-\Delta)((|x|^{-1}*|u|^2)u)}\,dx\\
		& =: 2(\,\widetilde A + \widetilde B\,).
		\end{align*}
		$\widetilde A$ is written as
		\begin{align*}
		\widetilde A &= {\rm Im }\int |x|^2 \pe \tv  \overline{[|\nabla|^\alpha (1-\Delta)^{-1}, |x|^2 \pe] (1-\Delta)\tv}\Big\rangle\\
		&\qquad + {\rm Im}\int |\nabla|^\alpha (1-\Delta)^{-2}(|x|^2 \pe \tv) \overline{[1-\Delta, |x|^2 \pe]\tv}\,dx =: \widetilde A_1 + \widetilde A_2.
		\end{align*}
		Then we  have
		\begin{align*}
		|[|&|\nabla|^\al (1-\Delta)^{-1}, |x|^2 \pe|](1-\Delta)\tv (x)|\\
		&\le \int K(x-y)||x|^2 \pe(x)- |y|^2 \pe(y)||(1-\Delta)\tv(y)|\,dy\\
		&\lesssim \int K(x-y)|x-y|(|x-y|+ 2|y|) |(1-\Delta)\tv(y)|\,dy
		\end{align*}
		and hence
		$$
		\widetilde A_1 \lesssim \sqrt{\tmm_\varepsilon}\,(\|u\|_{H^{4}} +
		\|x u\|_{H^2}).
		$$
		As for $A_2$
		we have
		\begin{align*}
		A_2 &=  -{\rm Im}\int |\nabla|^\alpha(1-\Delta)^{-1}(|x|^2 \pe \tv)\overline{
			\big(\Delta( |x|^2 \pe) v + 2\nabla(|x|^2\pe ) \cdot \nabla \tv \big)}\,dx\\
		&\lesssim \sqrt{\tmm_\ep}\,(\|u\|_{H^{2}} +
		\|x  u\|_{H^3}).
		\end{align*}
		$\widetilde B$ can be treated similarly to $B_\ell$ as follows:
		\begin{align*}
		\widetilde B &\lesssim  \sqrt{\tmm_\varepsilon}\,\big(\||x| (1-\Delta)(|x|^{-1}*|u|^2)\|_{L^\infty}\||x| u\|_{L^2}\\
		&\qquad\qquad\qquad + \||x| \nabla(|x|^{-1}*|u|^2)\|_{L^\infty}\||x|\nabla u\|_{L^2}\big)\\
		&\lesssim \sqrt{\tmm_\varepsilon}\,\|xu\|_{H^1}^2\|u\|_{H^2}.
		\end{align*}
		Combining these estimates and the previous argument, we deduce that $x^2u \in C([0, \infty); H^2)$ since $u \in C([0, \infty); H^5)$ and $xu \in C([0, \infty); H^3)$. At the negative time, we can carry out the same argument. This completes the proof of Lemma \ref{moment}.
	\end{proof}

	\section{Time decay}

	\begin{prop}\label{prop1}
		Let $ 1<\alpha<2$, $t\in\mathbb{R}$ and $N_0 > 4$. Then we have
		$$
		\| e^{-it\left\vert \nabla\right\vert^{\alpha}}v\|_{L^\infty}
		\lesssim \bt^{-\frac{3}{2}}\| \bxi^{4-2\alpha}\widehat{v}\|_{L^{\infty}}
		+\bt^{-\frac{3}{2}-(\frac14-\frac1{N_0})}
		\big(\| x^{2} v \|_{L^{2}}+\| v\|_{H^{N}}\big),
		$$
		if $N \ge \frac72 N_0(2-\al) + 2\al-\frac{5}{2}$.
	\end{prop}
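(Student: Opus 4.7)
My plan is to represent the linear flow as an oscillatory Fourier integral
$$
\sa v(x) = (2\pi)^{-3}\int_{\mathbb R^3} e^{i(x\cdot\xi - t|\xi|^{\al})}\widehat v(\xi)\,d\xi,
$$
Littlewood--Paley decompose $\widehat v=\sum_k\beta(\xi/2^k)\widehat v$, and analyze each dyadic piece
$$
I_k(t,x) := \int_{\mathbb R^3} e^{i(x\cdot\xi - t|\xi|^{\al})}\beta(\xi/2^k)\widehat v(\xi)\,d\xi
$$
by stationary phase when $2^k$ lies below a threshold $2^{k_0}$, and by Bernstein against $\|v\|_{H^N}$ when $2^k$ exceeds it. The cutoff $k_0=k_0(t,N_0)$ will be chosen to balance the two resulting contributions; the case $\bt\lesssim 1$ is handled directly by Sobolev embedding, so I assume $\bt\gg 1$ throughout.

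For fixed $x,t$ the phase $\Phi(\xi)=x\cdot\xi-t|\xi|^{\al}$ has a unique critical point $\xi_c$ determined by $x=\al t|\xi_c|^{\al-2}\xi_c$, and at $\xi_c$ the Hessian determinant has order $t^3|\xi_c|^{3(\al-2)}$. Denote by $2^{k_c}$ the dyadic scale of $|\xi_c|$. For $k$ at distance $\gtrsim 1$ from $k_c$ the phase is non-stationary on $\{|\xi|\sim 2^k\}$ and I plan to integrate by parts along $\nabla\Phi/|\nabla\Phi|^2$ to gain rapid decay in $|k-k_c|$. For $k$ near $k_c$, the rescaling $\xi=2^k\eta$ turns the integral into one with large parameter $t\cdot 2^{k\al}$, and standard stationary phase yields
$$
|I_k(t,x)|\lesssim \bt^{-3/2}\,|\xi_c|^{3(2-\al)/2}\,|\widehat v(\xi_c)|.
$$
Bounding $|\widehat v(\xi_c)|\le\langle\xi_c\rangle^{2\al-4}\|\bxi^{4-2\al}\widehat v\|_{L^\infty}$ and using the elementary inequality $|\xi|^{3(2-\al)/2}\langle\xi\rangle^{2\al-4}\lesssim 1$ (valid for $1<\al<2$), summation over $k\le k_0$ yields the principal contribution $\bt^{-3/2}\|\bxi^{4-2\al}\widehat v\|_{L^\infty}$.

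The remainder in the stationary phase expansion is of order $(t\cdot 2^{k(\al-2)})^{-5/2}$ times second-derivative norms of the amplitude $\beta(\cdot/2^k)\widehat v$, which I plan to convert by Plancherel and Cauchy--Schwarz into bounds in terms of $\|\nabla_\xi^2\widehat v\|_{L^2}=\|x^2v\|_{L^2}$; summed over $k\le k_0$ this produces an error of the form $\bt^{-c_1}\,2^{c_2 k_0}\|x^2 v\|_{L^2}$ with explicit $c_1,c_2$ depending only on $\al$. For the high-frequency tail $k>k_0$ I use Bernstein's inequality $\|P_k\sa v\|_{L^\infty}\lesssim 2^{3k/2}\|P_k v\|_{L^2}$ (the latter by $L^2$-unitarity of $\sa$) together with $\|P_k v\|_{L^2}\lesssim 2^{-kN}\|v\|_{H^N}$, summing to $\lesssim 2^{k_0(3/2-N)}\|v\|_{H^N}$.

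Finally I choose $2^{k_0}$ to be a suitable power $\bt^{\vartheta}$ that balances the stationary-phase remainder against the high-frequency tail; the hypothesis $N\ge\tfrac72 N_0(2-\al)+2\al-\tfrac52$ is tailored so that for this optimal $\vartheta$ both error contributions decay like $\bt^{-3/2-(1/4-1/N_0)}$, yielding the claimed bound. The hardest point will be executing the stationary phase estimate uniformly in $k$ when the critical point lies near the origin: the lack of smoothness of $|\nabla|^{\al}$ at zero frequency forces careful analysis of both the leading term (which is why the weight $\bxi^{4-2\al}$, rather than a flat $L^\infty$ weight, appears) and the remainder (which determines how $\|x^2v\|_{L^2}$ enters), and tracking the resulting powers of $2^{k_0}$ throughout the summation is where the precise lower bound on $N$ is actually consumed.
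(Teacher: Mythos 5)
Your plan coincides in structure with the paper's: dyadic decomposition of $\widehat v$, Bernstein against $\|v\|_{H^N}$ for high frequencies, and a stationary/non-stationary split governed by the critical point $\xi_c$, with the weight $\bxi^{4-2\al}$ arising from the Hessian determinant $\sim t^3|\xi_c|^{3(\al-2)}$ and the norm $\|x^2v\|_{L^2}$ entering through Cauchy--Schwarz; the choice of cutoff $2^{k_0}\sim\bt^{1/(2N_0(2-\al))}$ then reproduces the hypothesis on $N$.

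Two technical cautions, though, on how you phrase the stationary region. First, the quoted remainder ``$(t\,2^{k(\al-2)})^{-5/2}$ times second-derivative norms'' is the classical stationary-phase formula, which presupposes pointwise control of $\nabla^2_\xi\widehat v$; here only $\|\nabla^2_\xi\widehat v\|_{L^2}$ is available, so that formula cannot be invoked as such. The paper instead performs a second dyadic decomposition around $\xi_0$ at scales $\ge t^{-1/2}$, integrates by parts twice on each annulus, and then applies Cauchy--Schwarz on the (small) support; summing over the annuli replaces the $\lambda^{-5/2}$ gain by $\lambda^{-2}\cdot t^{1/4}$, and that $t^{1/4}$ loss is precisely why the final decay is $\bt^{-3/2-(1/4-1/N_0)}$ rather than something better. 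Second, the paper separates off a trivial low-frequency region $2^k\le\bt^{-1/2}$ (bounded directly by $\|\beta_k\|_{L^1}\|\widehat v\|_{L^\infty}$); this cutoff is necessary, because after two integrations by parts the non-stationary contribution has the form $\sum_k t^{-2}2^{(3-2\al)k}\|\widehat{v}_k\|_{L^\infty}$, which does not converge as $k\to-\infty$ for $\al>\tfrac32$. Your single-threshold scheme must incorporate this observation, or use the sharper gradient lower bound $|\nabla\Phi|\gtrsim t\,2^{(\al-1)\max(k,k_c)}$ to make the low-frequency geometric sum converge. Neither point invalidates the plan, but both must be addressed when carrying it out.
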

	
	\begin{proof}[Proof of Proposition \ref{prop1}] We assume that $t \ge 0$.
		
		Since $\| e^{-it\left\vert \nabla\right\vert^{\alpha}}v\|_{L^\infty}
		\lesssim 	
		\| v\|_{H^{N}}
		$ for $\ N>\frac{3}{2} $,
		we may assume $t > 1$.
		We write by dyadic decomposition in the Fourier side
		$$
		\| e^{-it\vert \nabla\vert^{\alpha}} v \|_{L^\infty}
		\lesssim \sup_{x}\sum_{k \in \mathbb Z}
		I_{k}(t,x),
		$$
		where
		\begin{align}\label{Ikdyadic}
		I_{k}(t,x)	= \frac1{(2\pi)^3}\big\vert	\int  e^{i\phi_{t,x}(\xi)}	\beta_{k}(\xi)\widehat{v}(\xi)	d\xi \big\vert,\quad	\phi_{t,x}(\xi)	= -t\left\vert\xi\right\vert^{\alpha}+x\cdot\xi.
		\end{align}
		
		For the low frequency part of summation we estimate
		\begin{align*}
		\sum_{2^{k}\leq \bt^{-\frac{1}{2}}}
		I_{k}(t,x)
		\lesssim \sum_{2^{k}\leq \bt^{-\frac{1}{2}}}
		\| \beta_{k}\|_{L^{1}}\|\widehat{v}\|_{L^{\infty}}
		\lesssim \bt^{-\frac{3}{2}}
		\|\widehat{v}\|_{L^{\infty}}.
		\end{align*}
		
		And for the high frequency part we have that
		\begin{align*}
		\sum_{2^{k}\ge \bt^{\frac{1}{2N_0(2-\alpha)}}}
		I_{k}(t,x)
		&\lesssim
		\sum_{2^{k}\ge\bt^{\frac{1}{2N_0(2-\alpha)}}}
		2^{\frac{3k}{2}}
		\|\beta_{k}\widehat{v}\|_{L^{2}} \\
		&\lesssim
		\sum_{2^{k}\ge \bt^{\frac{1}{2N_0(2-\alpha)}}}
		2^{\frac{3k}{2}}2^{-Nk}
		\| v\|_{H^{N}}   \\
		&\lesssim
		\bt^{-\frac1{2N_0(2-\alpha)}(N-\frac{3}{2})}
		\| v\|_{H^{N}}.
		\end{align*}
		If $N \ge \frac72N_0(2-\al) + 2\al-\frac{5}{2}$, then 	
		\begin{align}\label{t-sob}
		\sum_{2^{k}\ge \bt^{\frac1{2N_0(2-\alpha)}}}
		I_{k}(t,x)
		\lesssim \bt^{-\frac{3}{2}-(\frac14-\frac1{N_0})} \| v\|_{H^{N}}.
		\end{align}
		
		Now, let us bound the remaining part:
		\begin{equation}\label{remainderterm}
		\sum_{\bt^{-\frac{1}{2}}\le2^{k}\le \bt^{\frac{1}{2N_0(2-\alpha)}}}
		\left\vert
		\int
		e^{i(-t\left\vert\xi\right\vert^{\alpha}+x\cdot\xi)}
		\beta_{k}(\xi)\widehat{v}(\xi)
		d\xi
		\right\vert.
		\end{equation}
		First, we consider the non-stationary case.	We write the phase function as
		\begin{equation}\label{xizero}
		\nabla\phi_{t,x}(\xi)
		= -\alpha t
		(\frac{\xi}{\vert\xi\vert^{2-\alpha}}
		-\frac{\xi_{0}}{\vert\xi_{0}\vert^{2-\alpha}}),
		\ \textrm{where} \
		\xi_{0}
		= \left(\frac{\vert x\vert}{\alpha t}\right)^{\frac{1}{\alpha-1}}
		\frac{x}{\vert x\vert}.
		\end{equation}
		For $\xi_{0}$ in $(\ref{xizero})$, there exist $k_{0}\in\mathbb{Z}$	such that  $\vert \xi_{0} \vert \sim 2^{k_{0}}.$
		Then by Lemma $\ref{lowbound}$, we see that
		if $\vert k-k_{0}\vert\ge4 $
		$$
		\vert\nabla\phi_{t,x}(\xi)\vert
		\gtrsim\
		\vert t\vert2^{(\alpha-1)k}
		\ \textrm{for} \
		\vert\xi\vert\sim2^{k}.
		$$
		By taking integration by parts twice in the expression \eqref{remainderterm}, we get
		\[
		\sum_{k \in [Non-Stat]}
		I_{k}(t,x)
		\lesssim \mathcal A + \mathcal B + \mathcal C,
		\]
		where $[Non-Stat]$ denotes the set $\{k : \bt^{-\frac{1}{2}}\le2^{k}\le \bt^{\frac1{2N_0(2-\alpha)}}, \left\vert k-k_{0}\right\vert > 4\}$,
		\begin{align*}
		\mathcal A &= \sum_{k \in [Non-Stat]} t^{-2}2^{-2\alpha k}
		\| \beta_{k}\widehat{v} \|_{L^{1}}, \\
		\mathcal B &= \sum_{k \in [Non-Stat]}t^{-2}2^{(-2\alpha+1)k}
		\| \nabla(\beta_{k}\widehat{v}) \|_{L^{1}}, \\
		\mathcal C &= \sum_{k \in [Non-Stat]}t^{-2}2^{(-2\alpha+2)k}
		\|\nabla^{2}(\beta_{k}\widehat{v})\|_{L^{1}}.
		\end{align*}
		We denote briefly $v_k=\widetilde{P}_k v$, then it holds $\beta_k\widehat{v}=\beta_k\widehat{v_k}$. H\"older's and Bernstein's inequalities give us that
		\begin{align*}
		\mathcal A &\lesssim
		\sum_{\bt^{-\frac{1}{2}}\le2^{k}}
		t^{-2}2^{-2\alpha k}
		\| \beta_{k}\|_{L^{1}}
		\| \widehat{v_{k}}\|_{L^{\infty}} \lesssim
		\sum_{\bt^{-\frac{1}{2}}\le2^{k}} t^{-2}
		2^{-2\alpha k}2^{3k}
		\| \widehat{v_{k}}\|_{L^{\infty}}  \\
		&\lesssim
		\sum_{\bt^{-\frac{1}{2}}\le2^{k}}
		t^{-2}2^{-k}
		\|
		\bxi^{4-2\alpha}\widehat{v}
		\|_{L^{\infty}}  \lesssim
		t^{-\frac{3}{2}}
		\| \bxi^{4-2\alpha}\widehat{v}
		\|_{L^{\infty}}.
		\end{align*}
		By Sobolev embedding and Plancherel's theorem we have
		\begin{equation}\label{ineq:L6x2f}
		\|\nabla\widehat{v}\|_{L^{6}}
		\lesssim
		{\|\nabla ^2 \widehat{v}\|_{L^{2}}}
		\lesssim \| x^{2}v\|_{L^{2}}.
		\end{equation}
		Using this inequality, we see that
		\begin{align*}
		\mathcal B &\lesssim
		\sum_{k \in [Non-Stat]}
		t^{-2}2^{(-2\alpha+1)k}
		\Big( 2^{-k}\| \beta_{k}\widehat{v}\|_{L^{1}}
		+\| \beta_{k}\nabla\widehat{v}\|_{L^{1}} \Big) \\
		&\lesssim
		\sum_{k \in [Non-Stat]}
		t^{-2}2^{(-2\alpha+1)k}
		\Big( 2^{2k}\| \widehat{v_{k}}\|_{L^{\infty}}
		+2^{\frac{5k}{2}}\|\nabla\widehat{v}\|_{L^{6}} \Big) \\
		&\lesssim
		\sum_{k \in [Non-Stat]}
		t^{-2}2^{(-2\alpha+1)k}
		\Big( 2^{2k}\| \widehat{v_{k}}\|_{L^{\infty}}
		+2^{\frac{5k}{2}}\| x^{2}v \|_{L^{2}} \Big) \\
		&\lesssim
		\sum_{\bt^{-\frac{1}{2}}\le2^{k}}
		t^{-2}2^{-k}2^{(-2\alpha+4)k}\| \widehat{v_{k}}\|_{L^{\infty}} +\sum_{k \in [Non-Stat]}
		t^{-2}2^{(-2\alpha+\frac{7}{2})k}
		\| x^{2}v \|_{L^{2}}.
		\end{align*}
		The first sum is bounded by $t^{-\frac32}\| \bxi^{4-2\alpha}\widehat{v}\|_{L^{\infty}} $. The second sum can be estimated case by case w.r.t. $\al$ as follows:
		%To estimate the second term, we divide the case depending on the range of $\alpha$.
		\begin{align*}
		\sum_{k \in [Non-Stat]} &t^{-2}2^{(-2\alpha+\frac{7}{2})k}\| x^{2}v \|_{L^{2}}\\
		&\lesssim \left\{\begin{array}{ll}
		t^{-2+(-2\alpha+\frac{7}{2})	\frac1{2N_0(2-\alpha)}}, &\;\textrm{if}\; 1 < \alpha < \frac{7}{4}, \\
		t^{-2}\ln(1+t),&\; \textrm{if}\; \alpha = \frac{7}{4}, \\
		t^{-2+\frac{1}{2}(2\alpha-\frac{7}{2})},&\;\textrm{if}\; \frac{7}{4}< \alpha < 2
		\end{array}\right\} \| x^{2}v \|_{L^{2}}\\
		&\lesssim t^{-\frac74}\| x^{2}v \|_{L^{2}}.
		\end{align*}
		Lastly, we see that    	
		\begin{align*}
		\mathcal C  &\lesssim
		t^{-2}
		\sum_{k \in [Non-Stat]}
		\Big(
		2^{(-2\alpha+3)k}\|\widehat{v_{k}}\|_{L^{\infty}}
		+2^{(-2\alpha+2)k}2^{\frac{3k}{2}}\| x^{2}v\|_{L^{2}}
		\Big).
		\end{align*}	
		This can be treated similarly to $\mathcal B$. In conclusion, we have the bound for non-stationary case: \\
		\begin{align*}
		\sum_{k \in [Non-Stat]}I_k(t,x) \lesssim t^{-\frac32}\|\bxi^{4-2\al}\widehat{v}\|_{L^\infty} + t^{-\frac74}\|x^2 v\|_{L^2}.
		%&\lesssim
		%    t^{-\frac{3}{2}}
		%    \|\langle \cdot \rangle^{4-2\alpha}
		%    \widehat{v}\|_{L^{\infty}}
		%    +t^{-2+(-2\alpha+\frac{7}{2})\cdot
		%    	\frac{\alpha}{16(2-\alpha)}}
		%    \| x^{2}v \|_{L^{2}},
		%    &\textrm{if}& \quad 1<\alpha < \frac{7}{4} .\\
		%&\lesssim
		%    t^{-\frac{3}{2}}
		%    \| \langle \cdot \rangle^{4-2\alpha}\widehat{v}\|_{L^{\infty}}
		%    +t^{-2+\delta}
		%    \| x^{2}v \|_{L^{2}},
		%    & \textrm{if} & \quad \alpha = \frac{7}{4} . \\
		%&\lesssim
		%    t^{-\frac{3}{2}}
		%    \| \langle \cdot \rangle^{4-2\alpha}\widehat{v}\|_{L^{\infty}}
		%    +t^{-2+\frac{1}{2}(2\alpha-\frac{7}{2})\cdot
		%    	\frac{\alpha}{16(2-\alpha)}}
		%    \| x^{2}v \|_{L^{2}},
		%    & \textrm{if} & \quad \frac{7}{4}< \alpha < 2
		\end{align*}
		
		Now, we move onto stationary case: $$ k \in [Stat] := \{k : \bt^{-\frac{1}{2}}\le2^{k}\le \bt^{\frac1{2N_0(2-\alpha)}}, \left\vert k-k_{0}\right\vert \le 4\}.$$	Let $n_{0}$ denote the smallest integer such that $2^{n_{0}}\sim t^{-\frac{1}{2}} $.
		We further decompose the integral $I_{k}$ in $(\ref{Ikdyadic})$ dyadically around $\xi_{0}$ as
		\begin{equation}\label{Ikndyadic}
		\sum_{k \in [Stat]} I_{k}(t,x)
		\lesssim  \sum_{k \in [Stat]} \sum_{n\ge n_{0}}^{k_{0}+4} I_{k,n}(t,x),
		\end{equation}
		where
		$$
		I_{k,n}(t,x) =\left\vert \int_{\mathbb{R}^{3}} e^{i\phi_{t,x}(\xi)}\beta_{k}(\xi)\beta_n^{(n_0)}(\xi-\xi_{0}) \widehat{v}(\xi)d\xi \right\vert
		$$
		and
		\begin{align}\label{beta}
		\beta^{(n_{0})}_{n}(\xi-\xi_{0}) = \left\{
		\begin{array}{ll}
		\beta_{n}(\xi-\xi_{0}),& \textrm{if} \;\; n>n_{0},\\
		\varphi(\frac{\xi-\xi_{0}}{2^{n_{0}}}), &\textrm{if}\;\;  n=n_{0}
		\end{array}\right.
		\end{align}
		for a fixed $\varphi \in C_0^\infty$ which cut off near the origin.
		If $n = n_{0}$, it is easy to check	
		$$
		I_{k,n}(t,x) \lesssim \| \beta_{n_{0}}^{(n_{0})}\|_{L^{1}}
		\|\beta_{k}\widehat{v}\|_{L^{\infty}}
		\lesssim t^{-\frac{3}{2}}\|\widehat{v}\|_{L^{\infty}}.
		$$
		If $n > n_{0}$, Lemma \ref{lowbound} yields
		$$
		\vert\nabla\phi_{t,x}(\xi)\vert
		\gtrsim\
		\vert t\vert 2^{n}2^{(\alpha-2)k}
		\;\; \textrm{for} \;\;
		\vert\xi-\xi_{0}\vert\sim2^{n}.
		$$
		By taking integration by parts twice, we estimate
		\begin{align*}
		I_{k,n}
		\lesssim
		t^{-2} \sum_{0 \le \ell \le 2}
		2^{-(4- l)n}2^{2(2-\alpha)k}
		\| \nabla^{\ell}(\beta_{k}\beta_{n}^{(n_{0})}\widehat{v})
		\|_{L^{1}}
		\lesssim
		A_{k,n}+B_{k,n}+C_{k,n},
		\end{align*}
		where	
		\begin{align*}
		\begin{split}
		A_{k,n}&=
		t^{-2}2^{-4n}2^{2(2-\alpha)k} \|\beta_{k}\beta_{n}^{(n_{0})}\widehat{v}\|_{L^{1}}, \\
		B_{k,n}&=
		t^{-2}2^{-3n}2^{2(2-\alpha)k}
		\| \beta_{k}\beta_{n}^{(n_{0})}\nabla\widehat{v}\|_{L^{1}},\\
		C_{k,n}&=
		t^{-2}2^{-2n}2^{2(2-\alpha)k}
		\| \beta_{k}\beta_{n}^{(n_{0})}\nabla^{2}\widehat{v} \|_{L^{1}}.
		\end{split}
		\end{align*}
		We bound $\sum A_{k, n}$ as
		\begin{align*}
		\sum_{k \in [Stat]}
		\sum_{n> n_{0}}^{k+4}
		A_{k,n}
		&\lesssim
		t^{-2}
		\sum_{\vert k-k_{0}\vert\leq4}
		\sum_{n> n_{0}}^{k+4}
		2^{-4n}2^{2(2-\alpha)k} \|\beta_n^{(n_0)}\|_{L^{1}}
		\|\widehat{v_{k}}\|_{L^{\infty}}  \\
		&\lesssim
		t^{-2}t^{\frac{1}{2}} \| \bxi^{4-2\alpha}\widehat{v} \|_{L^{\infty}}.
		\end{align*}
		For $\sum B_{k, n}$, let us invoke that $2^{k}\le\bt^{\frac{1}{2N_0(2-\alpha)}}$. Then by using \eqref{ineq:L6x2f} we estimate
		\begin{align*}
		\sum_{k \in [Stat]}
		\sum_{n> n_{0}}^{k+4}
		B_{k,n}
		&\lesssim
		t^{-2}
		\sum_{\vert k-k_{0}\vert\leq4}
		2^{2(2-\alpha)k}
		\sum_{n> n_{0}}^{k_{0}+8}
		2^{-\frac{n}{2}}
		\| x^{2}v\|_{L^{2}} \\
		&\lesssim
		t^{-2}t^{\frac{1}{4}}t^{\frac1{N_0}}
		\| x^{2}v\|_{L^{2}}.
		\end{align*}
		The sum $\sum C_{k, n}$ is bounded as
		\begin{align*}
		\sum_{k \in [Stat]}
		\sum_{n> n_{0}}^{k+4}
		C_{k,n}
		&\lesssim
		t^{-2}
		\sum_{\vert k-k_{0}\vert\leq4}
		\sum_{n> n_{0}}^{k+4}
		2^{-2n}2^{2(2-\alpha)k} 2^{\frac{3n}{2}}
		\| x^{2}v\|_{L^{2}} \\
		&\lesssim
		t^{-2}t^{\frac{1}{4}}t^{\frac1{N_0}}
		\| x^{2}v\|_{L^{2}}.
		\end{align*}
		In conclusion, we obtain
		$$
		\sum_{k \in [Stat]}
		I_{k}(t,x)
		\lesssim
		t^{-\frac{3}{2}}
		\| \bxi^{4-2\alpha}\widehat{v}\|_{L^{\infty}}
		+t^{-\frac{3}{2}}t^{-(\frac14-\frac1{N_0})}
		\| x^{2}v\|_{L^{2}}.
		$$
		Comparing the bound obtained by non-stationary case with that by stationary case and \eqref{t-sob}, we get the desired result.

	\end{proof}

	\begin{cor}\label{ineq:W2}
		Let $N_0 > 4$ and $N\ge \frac72N_0(2-\alpha)+2\alpha-\frac{5}{2}$. Then we have
		$$
		\| e^{-it\vert\nabla\vert^{\alpha}}v\|_{W^{2,\infty}}
		\lesssim
		\bt^{-\frac{3}{2}}
		\| \bxi^{6-2\alpha}\widehat{v}\|_{L^{\infty}}
		+ \bt^{-\frac{3}{2}-(\frac14 - \frac1{N_0})}
		\big(\| v\|_{H^{N+2}}
		+ \| x^{2}v\|_{H^{2}} \big).
		$$
	\end{cor}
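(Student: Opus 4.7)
The plan is to reduce Corollary \ref{ineq:W2} to Proposition \ref{prop1} by commuting derivatives past the linear propagator. Since $e^{-it|\xi|^\al}$ is a Fourier multiplier, it commutes with every $\partial^\beta$, so $\partial^\beta \sa v = \sa (\partial^\beta v)$. Hence to estimate the $W^{2,\infty}$ norm it suffices to apply Proposition \ref{prop1} with $v$ replaced by $\partial^\beta v$ for each multi-index $|\beta|\le 2$ and then sum.

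Once we substitute, the three right-hand side terms produced by Proposition \ref{prop1} need to be rewritten in terms of the norms appearing in the corollary. The Sobolev piece is immediate: $\|\partial^\beta v\|_{H^N}\le \|v\|_{H^{N+2}}$. The weighted Fourier norm is also straightforward: since $|\widehat{\partial^\beta v}(\xi)| = |\xi^\beta||\widehat v(\xi)|\le \bxi^2 |\widehat v(\xi)|$ for $|\beta|\le 2$, we have $\|\bxi^{4-2\al}\widehat{\partial^\beta v}\|_{L^\infty}\le \|\bxi^{6-2\al}\widehat v\|_{L^\infty}$, which matches the first term on the right-hand side of the claim.

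The only nontrivial piece is the $\|x^2\partial^\beta v\|_{L^2}$ bound, where $x^2$ does not commute with $\partial^\beta$. My plan here is a Leibniz expansion: for $|\beta|=2$, say $\partial^\beta=\partial_i\partial_j$, the identity
\[
|x|^2\partial_i\partial_j v = \partial_i\partial_j(|x|^2 v) - 2\delta_{ij} v - 2 x_i\partial_j v - 2 x_j \partial_i v
\]
reduces matters to controlling $\|x^2 v\|_{H^2}$, $\|v\|_{L^2}$, and the mixed term $\|x\nabla v\|_{L^2}$. The latter I would handle by integrating $\int|x|^2|\nabla v|^2\,dx$ by parts against $\bar v$ and applying Cauchy--Schwarz together with Young's inequality to absorb the resulting $\|x\nabla v\|_{L^2}$-factor, giving $\|x\nabla v\|_{L^2}\lesssim \|v\|_{H^2}+\|x^2 v\|_{L^2}$. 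The $|\beta|=1$ case is analogous and strictly easier.

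I expect this commutator bookkeeping to be the only real obstacle; once it is in place, summing over $|\beta|\le 2$ and using $\|v\|_{H^2}\le \|v\|_{H^{N+2}}$ yields the stated inequality directly from Proposition \ref{prop1}.
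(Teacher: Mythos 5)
Your proposal is essentially the paper's own proof: commute $\nabla^\ell$ through the propagator, apply Proposition \ref{prop1} to $\nabla^\ell v$ for $\ell=0,1,2$, and rewrite the three resulting norms. The only difference is that you spell out the commutator step $\|x^2\nabla^\ell v\|_{L^2}\lesssim \|x^2 v\|_{H^2}+\|v\|_{H^2}$ (via the Leibniz identity and an integration-by-parts bound on $\|x\nabla v\|_{L^2}$), which the paper simply asserts; your justification is correct and in the spirit of what the authors intend.
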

	
	\begin{proof}
		By definition we estimate
		\begin{align*}
		&	\| e^{-it\vert\nabla\vert^{\alpha}}v
		\|_{W^{2,\infty}} = \sum_{0 \le \ell \le2}
		\| e^{-it\vert\nabla\vert^{\alpha}}\nabla^\ell v\|_{L^\infty} \\
		&\lesssim
		\sum_{0 \le \ell \le2}
		\bt^{-\frac{3}{2}} {\| \bxi^{4-2\alpha}\widehat{\nabla^\ell v}
			\|_{L^{\infty}}}
		+ \bt^{-\frac{3}{2}-(\frac14-\frac1{N_0})}
		(\| x^{2}(\nabla^\ell v)\|_{L^{2}}+  \|\nabla^\ell v\|_{H^{N}}) \\
		&\lesssim
		\bt^{-\frac{3}{2}}
		\| \bxi^{6-2\alpha}\widehat{v}\|_{L^{\infty}}
		+ \bt^{-\frac{3}{2}-(\frac14-\frac1{N_0})}
		\big(\| x^{2}v\|_{H^{2}}+\|v\|_{H^{N+2}}  \big).
		\end{align*} 	
	\end{proof}

	\newcommand{\bs}{{\langle s \rangle}}
	
	\section{Weighted Energy Estimate}
	
	We prove Theorem \ref{main thm} in Section 4 through Section 5. All the estimates in both sections are implemented for the positive time.
	
	To investigate the asymptotic behavior of the solution $u$ at time infinity we introduce a control norm.
	\begin{dfn}\label{def:norm} 	
		Let $N, \delta_0, T > 0$. Then we define the norm $\|\cdot\|_{\Sigma_T}$ for the functions $u \in C([0,T]; H^N)$ with $x v \in C([0, T]; H^3), x^2 v \in C([0, T]; H^2)$ by
		\begin{align*}
		\| u \|_{\Sigma_{T}} :=\sup_{[0,T]}
		\Big[
		&\bt^{-\delta_0} \| u(t)\|_{H^{N}}
		+ \bt^{-\delta_0}\| xv(t)\|_{H^{3}}
		+ \bt^{-2\delta_0}\| x^{2}v(t)\|_{H^{2}} \\
		&\qquad\qquad + \| \bxi^{5}\widehat{v}(t)\|_{L_{\xi}^{\infty}}
		\Big].
		\end{align*}
	\end{dfn}
	As observed in the proof of Lemma \ref{moment}, the role of $\| xv(t)\|_{H^{3}}$ is crucial to control of $\| x^{2}v(t)\|_{H^{2}}$. So, it is taking part in the definition of $\|\cdot\|_{\Sigma_T}$.
	
	\begin{prop}[Weighted energy estimate]\label{scattering}
		Let $\frac53 < \al < 2$, $N \ge 10$. And let $\delta_0 > 0$ be such tat
		$$
		\max\left(0, \frac{17}{12} - \frac{5\al}6\right) \le \delta_0 \le \frac{1}{36}.
		$$
		Suppose that $u$ is the global solution to \eqref{equation}  such that $\| u \|_{\Sigma_{T}}\le \epsilon_{1}$  for some $\epsilon_1, T > 0$ and $u_0$ satisfies \eqref{initialvalue} with $\epsilon_0 \le \epsilon_1$. Then we have
		\begin{equation}\label{ineq:HN}
		\sup_{t\in[0,T]}\bt^{-\delta_0}
		\| u\|_{H^{N}}
		\le
		\epsilon_{0}+C\epsilon_{1}^{3},
		\end{equation}
		
		\begin{equation}\label{eq:eps1}
		\sup_{t\in[0,T]}\bt^{-\delta_0}
		\| xv\|_{H^{3}}
		\le
		\epsilon_{0}+C\epsilon_{1}^{3},
		\end{equation}
		
		\begin{equation}\label{eq:eps2}
		\sup_{t\in[0,T]}\bt^{-2\delta_0}
		\| x^{2}v\|_{H^{2}}
		\le
		\epsilon_{0}+C\epsilon_{1}^{3}.
		\end{equation}
	\end{prop}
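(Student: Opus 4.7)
The plan is a bootstrap argument, estimating the three weighted norms in parallel. Throughout I assume the a priori bound $\|u\|_{\Sigma_T}\le\epsilon_1$; by Corollary~\ref{ineq:W2} this gives the dispersive decay $\|u(t)\|_{W^{2,\infty}}\lesssim\epsilon_1\bt^{-3/2}$ (plus a lower order term with additional small gain), which is the engine driving all three inequalities.

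For \eqref{ineq:HN}, I would differentiate $\|u\|_{H^N}^2$ in time; self-adjointness of $|\nabla|^\alpha$ leaves only the contribution of the cubic Hartree nonlinearity. Writing $(1-\Delta)^{N/2}\bigl((|x|^{-1}*|u|^2)u\bigr)$ by a Kato--Ponce/Leibniz identity, the leading piece with no derivative on the convolution factor has real multiplier $|x|^{-1}*|u|^2$ and so contributes zero upon taking imaginary parts. The remaining commutator-type terms are estimated by Kato--Ponce together with Hardy--Littlewood--Sobolev (handling $|\cdot|^{-1}*\,\cdot\,$ via the mapping $L^{6/5}\to L^2$), and each produces at least one factor of $\|u\|_{L^\infty}$ or $\|\nabla u\|_{L^\infty}$ with integrable time decay. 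Integrating in time and using $\|u\|_{H^N}\le\epsilon_1\bt^{\delta_0}$ closes \eqref{ineq:HN}.

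For \eqref{eq:eps1} and \eqref{eq:eps2}, I would work on the frequency side: using $\mathcal F(xv)=i\nabla_\xi\hat v$ we have $\|xv\|_{H^3}\sim\|\bxi^3\nabla_\xi\hat v\|_{L^2_\xi}$ and similarly for $\|x^2v\|_{H^2}$. Applying $\nabla_\xi$ (resp.\ $\nabla_\xi^2$) to Duhamel \eqref{integraleqn}, the ``good'' terms where derivatives fall on $\hat v(s,\xi-\eta)$ reproduce $xv$ (resp.\ $x^2v$) under the convolution and are closed by Gronwall. The dangerous pieces arise when $\nabla_\xi$ hits the phase $e^{is\phi_\alpha}$, producing $is\nabla_\xi\phi_\alpha$ and, in the second case, also $s^2(\nabla_\xi\phi_\alpha)^{\otimes 2}$ and $is\nabla_\xi^2\phi_\alpha$. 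My strategy is to exploit the identity
\[
\nabla_\xi\phi_\alpha=\alpha\xi|\xi|^{\alpha-2}-\nabla_\eta\phi_\alpha,
\]
peeling off the $\xi$-only factor (which is absorbed into the Fourier-side control $\|\bxi^5\hat v\|_{L^\infty}\lesssim\epsilon_1$) and integrating the $\nabla_\eta\phi_\alpha$-part by parts in $\eta$, since $s\nabla_\eta\phi_\alpha\,e^{is\phi_\alpha}=-i\nabla_\eta e^{is\phi_\alpha}$. The derivatives then fall on $|\eta|^{-2}$, on $\widehat{|u|^2}(\eta)$ (producing $\widehat{xu}$), and on $\hat v(\xi-\eta)$ (producing $\widehat{xv}$); in Step~3 a second integration by parts brings out $x^2u$ and $x^2v$ as well. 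A Littlewood--Paley decomposition in $\eta$ controls the remaining $|\eta|^{-2}$ singularity: the low-frequency pieces are bounded by $\int_{|\eta|\lesssim 2^n}|\eta|^{-2+\alpha}\,d\eta\lesssim 2^{\alpha n}$, finite because $\alpha>1$, and the high-frequency tail is handled by the $\|u\|_{H^N}$ control.

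The main obstacle is the $x^2v$ bound \eqref{eq:eps2}. The worst term comes from the Hessian $\nabla_\xi^2\phi_\alpha$, whose $|\xi|^{\alpha-2}$ singularity at the origin must be balanced against the decay of the other factors after a full Littlewood--Paley summation; this forces the restriction $\alpha>5/3$ and is precisely what produces the lower bound $\delta_0\ge 17/12-5\alpha/6$ recorded in the statement. The cubic structure then guarantees that every bound on the right-hand side carries a factor $\epsilon_1^3$: two of the three $u$-factors contribute dispersive decay (through $\|u\|_{W^{2,\infty}}$ or $\|\bxi^5\hat v\|_{L^\infty}$) while the third contributes the slowly growing quantity being estimated, and the bootstrap closes for $\epsilon_1$ sufficiently small.
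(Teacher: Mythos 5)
Your sketch for \eqref{ineq:HN} is a genuine alternative to the paper's proof: the paper estimates $\|u(t)\|_{H^{N}}$ directly from Duhamel, product rules and $\|u\|_{L^{6}}\lesssim\bs^{-1}\epsilon_{1}$, giving $\int_{0}^{t}\bs^{-1+\delta_{0}}\epsilon_{1}^{3}\,ds\lesssim\bt^{\delta_{0}}\epsilon_{1}^{3}$, whereas you propose the energy method with the cancellation ${\rm Im}\int V|w|^{2}\,dx=0$ for $V=|x|^{-1}*|u|^{2}$ real and $w=(1-\Delta)^{N/2}u$. That cancellation is correct and would in fact give at least as good a bound as the paper's, so this part is fine (although calling the resulting time decay ``integrable'' is optimistic; the paper's bound is only $\bs^{-1+\delta_{0}}$, and what one actually needs is merely $\bt^{\delta_{0}}$-growth, which both approaches deliver).

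The serious gap is in your treatment of $\mathbf I_{2}$ and $\mathbf J_{2},\mathbf J_{3},\mathbf J_{4}$ for \eqref{eq:eps1}--\eqref{eq:eps2}. The paper never integrates by parts in $\eta$ here. It keeps the full symbol $\mathbf m(\xi,\eta)|\eta|^{-2}$, dyadically localizes, and applies the bilinear estimate (Lemma \ref{LemmaB}) with $A(\mathbf m^{k,k_{1},k_{2}})\lesssim\max(2^{k},2^{k_{1}},2^{k_{2}})^{\alpha-2}2^{-k_{2}}$; the explicit $s$-factor is cancelled by the fast decay $\|P_{k_{2}}(|u|^{2})\|_{L^{\infty}}\lesssim\min(\langle2^{k_{2}}\rangle^{-2}\bs^{-3},2^{3k_{2}})\epsilon_{1}^{2}$ from Lemma \ref{lemma24}, which, summed in $k_{2}$ against the $2^{-k_{2}}$, gives exactly $\bs^{-2}$ and therefore $\int_{0}^{t}s\,\bs^{-2+\delta_{0}}ds\lesssim\bt^{\delta_{0}}$. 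The softened power $2^{-k_{2}}$ (rather than $2^{-2k_{2}}$) is available precisely because of the mean-value vanishing $|\mathbf m(\xi,\eta)|\lesssim|\eta|\max(|\xi|,|\xi-\eta|)^{\alpha-2}$ as $\eta\to0$. Your split $\nabla_{\xi}\phi_{\alpha}=\alpha\xi|\xi|^{\alpha-2}-\nabla_{\eta}\phi_{\alpha}$ destroys this vanishing: neither piece is small at $\eta=0$, so the full $|\eta|^{-2}$ remains, and the ``$\xi$-only'' piece, carrying the bare prefactor $s$, gives only $\int_{0}^{t}s\,\bs^{-1+\delta_{0}}ds\sim\bt^{1+\delta_{0}}$. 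Worse, in the $\nabla_{\eta}\phi_{\alpha}$-piece the integration by parts you propose puts a derivative on $|\eta|^{-2}$ and produces an $|\eta|^{-3}$ singularity, which is not locally integrable in $\mathbb R^{3}$; your Littlewood--Paley remedy invokes $\int_{|\eta|\lesssim 2^{n}}|\eta|^{-2+\alpha}d\eta$, a power that never occurs in this computation and that in any case cannot compensate for $|\eta|^{-3}$. Even with the help of $\|P_{k_{2}}(|u|^{2})\|_{L^{\infty}}\lesssim 2^{3k_{2}}\epsilon_{1}^{2}$, the low-frequency sum behaves like $\sum_{k_{2}\le -\log_{2}s}2^{-3k_{2}}\cdot 2^{3k_{2}}$, which diverges. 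The only place the paper integrates by parts in $\eta$ is in Section 5, and there the integrand carries the low-frequency cutoff $\beta^{(n_{0})}_{n_{1}}(\eta)$ with $n_{1}>n_{0}$, precisely to keep $\eta$ away from the origin. No such cutoff is available in the proof of the present proposition, so your approach, as stated, does not close.
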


	In this section, we prove Proposition \ref{scattering}. We emphasize again that $\frac{5}{3} < \alpha < 2$, $N \ge 10$ and $\max(0, \frac{17}{12} - \frac{5\al}{6} ) \le \delta_0 \le \frac{1}{36}$. If we choose $ N_0 = 6 $ in Corollary \ref{ineq:W2}, then since $N \ge 10$ we have
	\begin{align}\begin{aligned}\label{w2bound}
	\bt^\frac32&\| e^{-it\vert\nabla\vert^{\alpha}}v\|_{W^{2,\infty}}\lesssim
	\| \bxi^{5}\widehat{v}\|_{L^{\infty}}
	+\bt^{-2\delta_0}
	\big(\| x^{2}v\|_{H^{2}}+\|v\|_{H^{N}}  \big).
	\end{aligned}\end{align}
	Therefore if $\|u\|_{\Sigma_T} \le \epsilon_1$, then by the definition of $\Sigma_{T}$ and \eqref{w2bound} we have
	\begin{align}\label{w2-infty-bound}
	\| u(t)\|_{W^{2,\infty}}
	\lesssim \bt^{-\frac{3}{2}}\epsilon_1.
	\end{align}
	We will see  in Section \ref{j3} why the conditions $\al > \frac53$ and $\delta_0 \ge \frac{17}{12} - \frac{5\al}{6}$ are necessary.
	
	\begin{subsection}{Proof of \eqref{ineq:HN}}
		From \eqref{integral} and Leibniz rule it follows that for any $t > 0$
		\begin{align*}
		&\|u( t)\|_{H^N} \\
		&\le \|u_0\|_{H^N} + C\int_0^{ t} \left\| (|x|^{-1}*|u|^2)u\right\|_{H^N}\,ds\\
		&\le \|u_0\|_{H^N} + C\int_0^{ t} (\||x|^{-1}*|u|^2\|_{L^\infty}\|u\|_{H^N} +  \||x|^{-1}*|u|^2\|_{H_6^N}\|u\|_{L^3})\,ds.
		\end{align*}
		Using  the estimate $\||x|^{-1}*|u|^2\|_{L^\infty} \lesssim \|u\|_{L^2}\|u\|_{L^6}$ and Leibniz rule once more,
		\begin{align*}
		\|u\|_{H^N}
		&\le \|u_0\|_{H^N} + C\int_0^{ t} (\|u\|_{L^2}\|u\|_{L^6} + \|u\|_{L^3}^2)\|u\|_{H^N}\,ds\\
		&\le \|u_0\|_{H^N} + C\int_0^{ t} \|u\|_{L^2}\|u\|_{L^6}\|u\|_{H^N} \,ds\\
		&\le \|u_0\|_{H^N} + C\int_0^{ t} \|u\|_{L^2}^\frac43\|u\|_{L^\infty}^\frac23\|u\|_{H^N} \,ds\\
		&\le \epsilon_0 + C\epsilon_1^\frac53 \|u_0\|_{L^2}^\frac43\int_0^{ t}\bs^{-1+\delta_0}\,ds\\
		&\le \epsilon_0 + C\bt^{\delta_0}\epsilon_1^3.
		\end{align*}

	\end{subsection}

	\begin{subsection}{Proof of \eqref{eq:eps1} }
		In order to prove \eqref{eq:eps1}, we need to establish the following.
		\begin{lm}\label{lemma24}
			Let $u$ satisfy the condition of Proposition\ref{scattering}. Then we have
			\begin{equation}\label{ineq:infty}
			\| P_{k}(\vert u\vert^{2})(s)\|_{L^{\infty}}
			\lesssim
			\min(\langle 2^k\rangle^{-2}\bs^{-3}, 2^{3k})\epsilon_{1}^{2},
			\end{equation}
			
			\begin{equation}\label{ineq:L2}
			\| P_{k}(\vert u\vert^{2})(s)\|_{L^{2}}
			\lesssim
			\min( 2^{\frac{3k}{2}}, 2^{\frac{3k}{2}}\langle 2^k\rangle^{-5}) \epsilon_{1}^{2},
			\end{equation}
			
			\begin{equation}\label{ineq:xf}
			\|
			P_{k}(xv)(s)
			\|_{L^{2}}
			\lesssim
			\min( 2^{(2-\alpha)k}\bs^{(3-\alpha)\delta_0}, \langle2^{k}\rangle^{-3}\bs^{\delta_0}).
			\end{equation}
			
		\end{lm}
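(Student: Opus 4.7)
The plan is to derive each of the three bounds from Bernstein's inequality together with mass conservation $\|u(t)\|_{L^2}=\|u_0\|_{L^2}\le\epsilon_1$, the components of $\|u\|_{\Sigma_T}\le\epsilon_1$, and the pointwise decay \eqref{w2-infty-bound}. In each case the two sides of the $\min$ correspond to a low-frequency Bernstein estimate and a high-frequency bound extracted from a higher Sobolev norm. For \eqref{ineq:infty}, Bernstein $L^1\to L^\infty$ gives $\|P_k(|u|^2)\|_{L^\infty}\lesssim 2^{3k}\|u\|_{L^2}^2\lesssim 2^{3k}\epsilon_1^2$, accounting for the first side of the minimum; for the other side I would bound $\|P_k(|u|^2)\|_{L^\infty}$ by $\|u\|_{L^\infty}^2$ for $k\le 0$ and by $2^{-2k}\|u\|_{W^{2,\infty}}^2$ for $k\ge 0$, and then invoke \eqref{w2-infty-bound} to extract $\bs^{-3}\epsilon_1^2$. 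Estimate \eqref{ineq:L2} runs in parallel: Bernstein $L^1\to L^2$ and mass conservation yield $2^{3k/2}\epsilon_1^2$, while the $\langle 2^k\rangle^{-5}$ improvement at $k\ge 0$ comes from $\|P_k f\|_{L^2}\lesssim 2^{-5k}\|f\|_{\dot H^5}$ combined with the Leibniz-type estimate $\|\,|u|^2\|_{H^5}\lesssim\|u\|_{H^5}\|u\|_{L^\infty}\lesssim\bs^{\delta_0-3/2}\epsilon_1^2\lesssim\epsilon_1^2$, where the time weight is absorbed because $\delta_0\le \tfrac{1}{36}$.

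For \eqref{ineq:xf}, Plancherel gives $\|P_k(xv)\|_{L^2}\simeq\|\beta_k\nabla_\xi\hat v\|_{L^2}$. The $\langle 2^k\rangle^{-3}\bs^{\delta_0}$ side is immediate from $\|xv\|_{H^3}\le\bs^{\delta_0}\epsilon_1$ via Bernstein. For the low-frequency side $2^{(2-\al)k}\bs^{(3-\al)\delta_0}$, the plan is to apply Hölder: choose $a=3/(2-\al)$ so that $\|\beta_k\|_{L^a}\sim 2^{(2-\al)k}$, and pair with $\|\nabla_\xi\hat v\|_{L^p}$ for the conjugate $p=3/(\al-\tfrac12)$ satisfying $\tfrac1a+\tfrac1p=\tfrac12$. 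Gagliardo--Nirenberg interpolation then yields $\|\nabla_\xi\hat v\|_{L^p}\lesssim\|\nabla_\xi\hat v\|_{L^2}^{\al-1}\|\nabla_\xi\hat v\|_{L^6}^{2-\al}$, and combining Plancherel with the Sobolev embedding $\|g\|_{L^6}\lesssim\|\nabla g\|_{L^2}$ identifies these two factors with $\|xv\|_{L^2}$ and $\|x^2 v\|_{L^2}$ respectively. Inserting $\|xv\|_{L^2}\le\bs^{\delta_0}\epsilon_1$ and $\|x^2 v\|_{L^2}\le\bs^{2\delta_0}\epsilon_1$ produces the time weight $(\al-1)\delta_0+2(2-\al)\delta_0=(3-\al)\delta_0$, exactly as required.

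The main obstacle will be the exponent bookkeeping in \eqref{ineq:xf}: pinning down the unique pair $(a,p)$ and interpolation weight $\theta=\al-1$ that simultaneously produce both the frequency weight $2^{(2-\al)k}$ and the time weight $\bs^{(3-\al)\delta_0}$, and verifying that $p\in(2,6)$ so that the Sobolev embedding applies. Since $\theta=\al-1$ lies in $(0,1)$ throughout $\al\in(1,2)$, the stricter assumption $\al>\tfrac53$ in Proposition~\ref{scattering} is not needed for the lemma itself and will presumably enter only in the weighted energy estimates of Section~4.3.
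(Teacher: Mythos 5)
Your proposal is correct: all three bounds are established. The comparison with the paper is worth recording because two of the three estimates are proved by genuinely different routes.

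For \eqref{ineq:infty} the approaches agree in substance: you split into $k\le 0$ (bound by $\|u\|_{L^\infty}^2$) and $k\ge 0$ (Bernstein, $2^{-2k}\|u\|_{W^{2,\infty}}^2$), while the paper handles both ranges simultaneously through the kernel representation $P_k(|u|^2)=\mathcal F^{-1}(\beta_k\langle\xi\rangle^{-2})*(1-\Delta)(|u|^2)$ together with $\|\mathcal F^{-1}(\beta_k\langle\xi\rangle^{-2})\|_{L^1}\lesssim\langle 2^k\rangle^{-2}$; the two are interchangeable. For \eqref{ineq:L2} you diverge from the paper: the paper writes $\|P_k(|u|^2)\|_{L^2}^2\lesssim\int\beta_k|\widehat u*\widehat{\bar u}|^2$ and exploits the pointwise decay $|\widehat u(y)|\lesssim\|\langle\cdot\rangle^5\widehat u\|_{L^\infty}(1+|y|)^{-5}$, whose self-convolution yields $\langle\xi\rangle^{-5}$; this uses only the \emph{unweighted} piece $\|\langle\xi\rangle^5\widehat v\|_{L^\infty}$ of $\Sigma_T$. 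You instead invoke $\|P_kf\|_{L^2}\lesssim 2^{-5k}\|f\|_{\dot H^5}$ and the Kato--Ponce estimate $\||u|^2\|_{H^5}\lesssim\|u\|_{H^5}\|u\|_{L^\infty}$, trading the $\bs^{\delta_0}$ growth of $\|u\|_{H^N}$ against the $\bs^{-3/2}$ decay of $\|u\|_{L^\infty}$. Your version even gives the slightly stronger rate $2^{-5k}$ for $k\ge 0$ instead of $2^{-7k/2}$. For \eqref{ineq:xf} you again differ from the paper: after the common reduction, the paper applies Bernstein $\|P_k(xv)\|_{L^2}\lesssim 2^{(2-\alpha)k}\|xv\|_{L^{6/(7-2\alpha)}}$ and then does a physical-space splitting over $\{|x|\le R\}$ and $\{|x|\ge R\}$ optimized in $R$ to reach $\|xv\|_{L^2}^{\alpha-1}\|x^2v\|_{L^2}^{2-\alpha}$, whereas you work on the Fourier side with Plancherel, H\"older against $\|\beta_k\|_{L^{3/(2-\alpha)}}$, Lebesgue interpolation at $p=6/(2\alpha-1)\in(2,6)$ and the Sobolev embedding $\dot H^1\hookrightarrow L^6$. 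Both arrive at the same interpolation inequality and the same time weight $\bs^{(3-\alpha)\delta_0}$, but your Fourier-side route avoids the ad hoc radius optimization. Your closing observation, that only $\alpha\in(1,2)$ is used in this lemma (so that $p\in(2,6)$, resp. $q=6/(7-2\alpha)\in(6/5,2)$) and that $\alpha>\tfrac53$ only matters later, is also correct.
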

		\begin{proof}[Proof of Lemma \ref{lemma24}]
			
			We first show \eqref{ineq:infty}. Using the fact  
			$\| \mathcal{F}^{-1}(\beta_{k}\bxi^{-2}) \|_{L^{1}} $ 
			$\lesssim \langle 2^k\rangle^{-2}$ and also \eqref{w2-infty-bound}, we obtain
			\begin{align*}
			\| P_{k}(\vert u\vert^{2})\|_{L^{\infty}}
			&=\sup_{x}
			\big\vert
			\int e^{ix\cdot\xi}\beta_{k}(\xi)
			\widehat{\vert u\vert^{2}}(\xi)\,
			d\xi
			\big\vert \\
			&=\sup_{x}
			\vert
			\int e^{ix\cdot\xi}
			\beta_{k}(\xi)\bxi^{-2}
			\widehat{(1-\Delta)(\vert u\vert^{2})}(\xi)\,
			d\xi \vert \\
			&\lesssim
			\| \mathcal{F}^{-1}(\beta_{k}\bxi^{-2})
			*(1-\Delta)(\vert u\vert^{2})\|_{L^{\infty}}  \\
			&\lesssim
			\| \mathcal{F}^{-1}(\beta_{k}\bxi^{-2})
			\|_{L^{1}}
			\|(1- \Delta) (|u|^2) \|_{L^{\infty}} \\
			&\lesssim
			\langle 2^k\rangle^{-2} \| u\|_{W^{2,\infty}}^{2}
			\lesssim \langle 2^k\rangle^{-2}\bs^{-3}\epsilon_1^2.
			\end{align*}
			On the other hand, we see that
			\begin{align*}
			\| P_{k}(\vert u\vert^{2})\|_{L^{\infty}} \lesssim \|\beta_k \, \widehat{|u|^2}\|_{L^1}
			\lesssim
			\|\beta_{k}\|_{L^{1}}
			\|\widehat{|u|^2}\|_{L^{\infty}}
			\lesssim 2^{3k}
			\|u\|_{L^2}^{2} \lesssim 2^{3k}\epsilon_1^2.
			\end{align*}	
			
			We consider \eqref{ineq:L2}. One can easily check
			\begin{align*}
			\| P_{k}(\vert u\vert^{2})\|_{L^{2}}
			\lesssim \|\beta_{k}\|_{L^{2}}\|\widehat{|u|^2}\|_{L^{\infty}} \lesssim
			2^{\frac{3k}{2}}\epsilon_{1}^{2}.
			\end{align*}
			Plancherel's theorem gives us
			\begin{align*}
			\| P_{k}(\vert u\vert^{2})\|_{L^{2}}^{2}
			&\lesssim
			\int_{\mathbb{R}^{3}}\beta_{k}(\xi)\vert \widehat{u}*\widehat{\bar{u}}\vert^{2}(\xi)d\xi  \\
			&\lesssim
			\int_{\mathbb{R}^{3}}
			\beta_{k}(\xi)\|\bxi^{5}\widehat{u}\|_{L^{\infty}}^{4} \big\vert\int_{\mathbb{R}^{3}}
			(1+\vert y\vert)^{-5}(1+\vert y-\xi\vert)^{-5}dy
			\big\vert^{2}
			d\xi \\
			&\lesssim
			\epsilon_{1}^{4}
			\int_{\mathbb{R}^{3}}
			\beta_{k}(\xi)
			\vert \int_{\mathbb{R}^{3}}
			(1+\vert y\vert)^{-5}(1+\vert y-\xi\vert)^{-5}dy \vert^{2}d\xi \\
			&\lesssim
			\epsilon_{1}^{4}
			2^{3k}\langle2^{k}\rangle^{-10}.
			\end{align*}
			
			%
			%can be obtained if we divide the region of integral into three
			%\begin{align*}
			% A=\left\{ y : \vert y\vert\le \frac{\vert\xi\vert}{2} \right\},
			% B=\left\{ y : \vert y-\xi\vert\le \frac{\vert\xi\vert}{2} \right\} \
			% \textrm{and} \
			% C=(A\cup B)^{c}
			%\end{align*}
			%In this calculation, we need the condition that $a>3$.	

			For  \eqref{ineq:xf} we use Bernstein's inequality to get
			\begin{align*}
			\| P_{k}(xv)\|_{L^{2}}
			\lesssim
			2^{k(2-\alpha)}
			\| xv\|_{L^{\frac{6}{7-2\alpha}}}.
			\end{align*}	
			Using H\"older's inequality, for any $R > 0$ we have
			\begin{align*}
			\| xv\|_{L^{\frac{6}{7-2\alpha}}}^{\frac{6}{7-2\alpha}}
			& \lesssim \sum_{j = 1}^3\left(\int_{\vert x\vert\le R}
			\vert x_jf(x)\vert^{\frac{6}{7-2\alpha}}dx
			+\int_{\vert x\vert\ge R}
			\vert x_jf(x)\vert^{\frac{6}{7-2\alpha}}dx\right)  \\
			&\lesssim
			R^{\frac{6(2-\alpha)}{7-2\alpha}}\| xv \|_{L^{2}}^{\frac{6}{7-2\alpha}}
			+R^{\frac{6(1-\alpha)}{7-2\alpha}}
			\| x^{2}v\|_{L^{2}}^{\frac{6}{7-2\alpha}}.
			\end{align*}
			The optimization by taking $R = \| x^{2}v\|\| xv \|^{-1}$ leads us to the estimate
			\begin{align*}
			\| xv\|_{L^{\frac{6}{7-2\alpha}}}  \lesssim
			\| x^{2}v\|_{L^{2}}^{2-\alpha}\cdot
			\| xv \|_{L^{2}}^{\alpha-1}   \lesssim
			\bs^{(3-\alpha)\delta_0}\epsilon_{1}.
			\end{align*}
			Also, we estimate in another manner
			\begin{align*}
			\| P_{k}(xv)\|_{L^{2}}
			\lesssim
			\langle 2^{k}\rangle^{-3}
			\| xv\|_{H^{3}}
			\lesssim
			\langle2^{k}\rangle^{-3}
			\bs^{\delta_0}\epsilon_{1}.
			\end{align*}
			This completes the proof of Lemma \ref{lemma24}.
			
		\end{proof}

		Note that Plancherel's theorem yields
		\begin{equation}
		\| xv \|_{H^{3}}
		\sim \|
		\left\langle\xi\right\rangle^{3}
		\widehat{xv}
		\|_{L^{2}} \\
		=\|
		\left\langle\xi\right\rangle^{3}
		\nabla_{\xi}\widehat{v}
		\|_{L^{2}}.
		\end{equation}
		Differentiating the both sides of \eqref{integraleqn} w.r.t. $\xi$, we have
		$$
		(\nabla\widehat{v})(t,\xi)
		= (\nabla \widehat{u_0})(\xi) + \mathbf I_{1}(t,\xi) + \mathbf I_{2}(t,\xi), $$
		$$    \mathbf I_{1}(t,\xi)
		= ic_0\int_{0}^{t}\int_{\mathbb{R}^{3}}
		e^{is\phi_{\alpha}(\xi,\eta)}
		\vert\eta\vert^{-2}
		\widehat{\vert u\vert^{2}}(\eta)
		(\nabla\widehat{v})(\xi-\eta)\, d\eta ds, $$
		$$   \mathbf I_{2}(t,\xi)
		= ic_0\int_{0}^{t} s \int_{\mathbb{R}^{3}} \mathbf m(\xi,\eta)
		e^{is\phi_{\alpha}(\xi,\eta)}
		\vert\eta\vert^{-2}
		\widehat{\vert u\vert^{2}}(\eta)\widehat{v}(\xi-\eta)\, d\eta ds,
		$$
		where
		\begin{equation}\label{m}
		\mathbf m(\xi,\eta) := \nabla_{\xi}[\phi_{\alpha}(\xi,\eta) ]
		=
		\alpha \left(\vert\xi\vert^{\alpha-2}\xi
		- \vert\xi-\eta\vert^{\alpha-2}(\xi - \eta)
		\right).
		\end{equation}
		Then the estimate \eqref{eq:eps1} follows from
		$$
		\|\bxi^3 \mathbf I_1(t, \xi)\|_{L^2} + \|\bxi^3 \mathbf I_2(t, \xi)\|_{L^2} \lesssim \bt^{\delta_0}\epsilon_1^3.
		$$
		We consider this in the next two subsections.
		
		\begin{subsubsection}{Estimate of ${\mathbf I}_{1}(s,\xi)$}
			We rewrite $\mathbf I_1$ as
			\begin{align*}
			{\mathbf I}_{1}(t,\xi)
			&=ic_0\int_{0}^{t}\int_{\mathbb{R}^{3}}
			e^{is\vert\xi\vert^{\alpha}}
			\vert\eta\vert^{-2}
			\widehat{\vert u\vert^{2}}(\eta)
			e^{-is\vert\xi-\eta\vert^{\alpha}}
			(\nabla\widehat{v})(\xi-\eta)
			d\eta ds  \\
			& = \lambda \int_{0}^{t}
			\mathcal{F}\big\{e^{is\vert\nabla\vert^{\alpha}}
			(\vert\cdot\vert^{-1}*\vert u\vert^{2})
			(e^{-is\vert\nabla\vert^{\alpha}}xv)\big\} ds.
			\end{align*}
			From the Sobolev estimates and $\||x|^{-1}*|u|^2\|_{\dot H^3} \lesssim \|\nabla |u|^2\|_{L^2}$, it follows that
			\begin{align}\begin{aligned}\label{xvH2}
			\|
			\left\langle\xi\right\rangle^{3}
			&{\mathbf I}_{1}(t,\xi)
			\|_{L^{2}}\\
			&\lesssim
			\int_{0}^{t}
			\|
			e^{is\vert\nabla\vert^{\alpha}}
			(\vert\cdot\vert^{-1}*\vert u\vert^{2})
			e^{is\vert\nabla\vert^{\alpha}}(xv)
			\|_{H^{3}}\,ds \\
			&\lesssim
			\int_0^t \|\vert\cdot\vert^{-1}*\vert u\vert^{2}\|_{L^\infty}\|x v\|_{H^3}\,ds + \int_0^t \|\vert\cdot\vert^{-1}*\vert u\vert^{2}\|_{\dot H^3}\|xv\|_{L^\infty}\,ds\\
			&\lesssim
			\int_{0}^{t}
			\| u\|_{L^{2}}
			\| u\|_{L^{6}}
			\| xv\|_{H^{3}}
			ds + \int_0^t \|\nabla |u|^2\|_{L^2}\|x v\|_{H^3}\, ds.
			\end{aligned}\end{align}
			Since $\| u\|_{\Sigma_{T}} \le \epsilon_{1}$, \eqref{w2-infty-bound} implies $\|u(s)\|_{L^\infty} \lesssim \bs^{-\frac32}\epsilon_1$, and by mass conservation $\|u(s)\|_{L^2} = \|u_0\|_{L^2} \le \epsilon_1$. Hence we have
			\begin{align}\begin{aligned}\label{ineq:L6}
			\| u\|_{L^{6}} &\le \|u\|_{L^2}^\frac13\|u\|_{L^\infty}^\frac23 \lesssim \bs^{-1}\epsilon_{1}\\
			\| \nabla |u|^2\|_{L^2} &\le \|u\|_{L^\infty}\|\nabla u\|_{L^2} \lesssim \bs^{-\frac32 + \delta_0}\epsilon_1^2.
			\end{aligned}\end{align}
			These give us
			\begin{align*}
			\|
			\left\langle\xi\right\rangle^{3}
			{\mathbf I}_{1}(t,\xi)
			\|_{L^{2}}
			\lesssim
			\epsilon_{1}^3\int_{0}^{t}
			\bs^{-1}\bs^{\delta_0}\,
			ds
			\lesssim
			\bt^{\delta_0}\epsilon_{1}^{3}.
			\end{align*} 	
		\end{subsubsection}

		\begin{subsubsection}{Estimate of ${\mathbf I}_{2}(t,\xi)$}\label{sec:i2}
			By the dyadic decomposition, we have
			\begin{align*}
			&\big\|
			\left\langle\xi\right\rangle^{3}
			{\mathbf I}_{2}(t,\xi)
			\big\|_{L^{2}}  \\
			\qquad
			&\lesssim
			\int_{0}^{t} s
			\sum_{k_{1},k_{2},k \in \mathbb{Z}}
			\langle 2^{k}\rangle^{3}
			\bigg\|
			\int_{\mathbb{R}^{3}}
			e^{is\phi_{\alpha}(\xi,\eta)}
			\mathbf m(\xi,\eta)\\
			&\qquad\quad
			\times\beta_{k}(\xi)
			\beta_{k_{1}}(\eta-\xi)
			\beta_{k_{2}}(\eta)
			\vert\eta\vert^{-2}
			\widehat{\widetilde P_{k_{2}}(\vert u\vert^{2})}(\eta)
			\widehat{\widetilde P_{k_1}(v)}(\xi-\eta)
			d\eta
			\bigg\|_{L^{2}_{\xi}}\,
			ds \\
			&\lesssim
			\int_{0}^{t}
			s
			\sum_{k_{1},k_{2},k\in\mathbb{Z}}
			{I}_{2}^{k, k_1, k_2}(s)\,
			ds,
			\end{align*}
			where
			$$
			{I}_{2}^{k, k_1, k_2}(s)=
			\langle 2^{k}\rangle^{3}
			\left\|
			\int
			\mathbf m^{k, k_1, k_2}(\xi,\eta)
			\widehat{\widetilde P_{k_{2}}(\vert u\vert^{2})}(\eta) e^{-is|\xi-\eta|^\al}
			\widehat{\widetilde P_{k_1}(v)}(\xi-\eta)
			d\eta
			\right\|_{L^{2}_{\xi}}
			$$
			and
			\begin{align}\label{mkkk}
			\mathbf m^{k,k_{1},k_{2}}(\xi,\eta)
			=  \mathbf m(\xi,\eta)
			\vert\eta\vert^{-2}
			\beta_{k}(\xi)\beta_{k_{1}}(\eta-\xi)\beta_{k_{2}}(\eta).
			\end{align}
			Thus it suffices to prove that
			\begin{equation}\label{i2kkk}
			\sum_{k_{1},k_{2},k\in\mathbb{Z}}
			{ I}_{2}^{k,k_{1},k_{2}}(s)
			\lesssim
			\bs^{-2+\delta_0}\epsilon_1^3.
			\end{equation}

			In view of the Fourier support condition we can divide the sum over $k, k_1, k_2$ into three possible cases:
			$$
			\sum_{k_{1},k_{2},k\in\mathbb{Z}}
			{I}_{2}^{k,k_{1},k_{2}}(s)
			\lesssim
			\big(
			\sum_{k\sim k_{1}\ge k_{2}}
			+\sum_{k\ll k_{1}\sim k_{2}}
			+\sum_{k\sim k_{2}\gg k_{1}}
			\big)
			{ I}_{2}^{k,k_{1},k_{2}}(s).
			$$
			
			In the first two cases, we use the following inequality: If $k \lesssim k_1$, then for any positive integers $\ell_1, \ell_2$
			\begin{align}\begin{aligned}\label{ineq:pt}
			&\left\vert
			\nabla_{\xi}^{\ell_1}\nabla_{\eta}^{\ell_2}
			\mathbf m^{k,k_{1},k_{2}}(\xi,\eta)
			\right\vert\\
			&  \lesssim
			\max(2^{k},2^{k_{1}},2^{k_{2}})^{\alpha-2}
			2^{-k_{2}}
			2^{-k_{1} \ell_1}
			2^{-k_{2} \ell_2}
			\widetilde \beta_k(\xi)\widetilde \beta_{k_1}(\eta-\xi)\widetilde \beta_{k_2}(\eta).
			\end{aligned}\end{align}
			This can be readily shown by direct calculation. We omit the details.
			This leads us to the bound for Lemma \ref{LemmaB}.
			\begin{equation}\label{lemmaC}
			\left\|
			\iint_{\mathbb{R}^{3}\times\mathbb{R}^{3}}
			\mathbf m^{k,k_{1},k_{2}}(\xi,\eta)
			e^{ix\cdot\xi}e^{iy\cdot\eta}\,
			d\xi d\eta
			\right\|_{L_{x,y}^{1}}
			\lesssim
			\max(2^{k},2^{k_{1}},2^{k_{2}})^{\alpha-2}
			2^{-k_{2}}.
			\end{equation}
			
			In the last case $(k_{1}\ll k\sim k_{2})$,
			even though $\mathbf m^{k,k_{1},k_{2}}$ fails to satisfy \eqref{ineq:pt},
			the estimate \eqref{lemmaC}) turns out to hold.
			Indeed, by making change of variables, one can verify that
			\begin{align*}
			&  \left\|
			\iint_{\mathbb{R}^{3}\times\mathbb{R}^{3}}
			\mathbf m^{k,k_{1},k_{2}}(\xi,\eta)
			e^{ix\cdot\xi}e^{iy\cdot\eta}\,
			d\xi d\eta
			\right\|_{L_{x,y}^{1}}\\
			&\qquad\qquad\qquad\qquad =
			\left\|
			\iint_{\mathbb{R}^{3}\times\mathbb{R}^{3}}
			\widetilde{\mathbf m}^{k,k_{1},k_{2}}(\xi,\eta)
			e^{ix\cdot\xi}e^{iy\cdot\eta}\,
			d\xi d\eta
			\right\|_{L_{x,y}^{1}},
			\end{align*}
			where
			\[
			\widetilde{ \mathbf m}^{k,k_{1},k_{2}}(\xi,\eta)
			=\alpha
			\left(
			\vert\xi\vert^{\alpha-2}\xi
			-\vert\eta\vert^{\alpha-2}\eta
			\right)
			\vert\xi-\eta\vert^{-2}
			\beta_{k}(\xi)\beta_{k_{1}}(\eta)\beta_{k_{2}}(\xi-\eta).
			\]
			Then $\widetilde{\mathbf m}^{k,k_{1},k_{2}}(\xi,\eta)$ satisfies that
			\begin{align*}
			&\left\vert
			\nabla_{\xi}^{l_1}\nabla_{\eta}^{l_2}
			\widetilde{\mathbf m}^{k,k_{1},k_{2}}(\xi,\eta)
			\right\vert\\
			&\lesssim
			\max(2^{k},2^{k_{1}},2^{k_{2}})^{\alpha-2}
			2^{-k_{2}}
			2^{-k l_1}
			2^{-k_1 l_2}
			\widetilde \beta_k(\xi)\widetilde \beta_{k_1}(\eta)\widetilde \beta_{k_2}(\xi-\eta)
			\end{align*}
			and hence the claim follows.
			
			({\it Case:} $k \sim k_{1}\ge k_{2}$)
			Applying the Lemma $\ref{LemmaB}$ whose $A(\mathbf m^{k, k_1, k_2})$ is the RHS of \eqref{lemmaC} and then using \eqref{ineq:infty}, we estimate
			\begin{align*}
			& \sum_{k_{2}\lesssim k\sim k_{1} }
			{I}_{2}^{k, k_1, k_2}(s)\\
			&\lesssim
			\sum_{k_{2}\lesssim k\sim k_{1} }
			\langle2^{k}\rangle^{3}
			\max(2^{k},2^{k_{1}})^{\alpha-2}2^{-k_{2}}
			\| \widetilde P_{k_{2}}(\vert u\vert^{2})\|_{L^{\infty}}
			\| \widetilde P_{k_{1}}(v)\|_{L^{2}} \\
			&\lesssim
			\sum_{k_{2}\lesssim k\sim k_{1} }
			\langle2^{k}\rangle^{3}
			\max(2^{k},2^{k_{1}})^{\alpha-2}
			2^{-k_{2}} \\
			&\qquad\times
			\min\left(
			\langle 2^{k_{2}}\rangle^{-2}\bs^{-3},
			2^{3k_{2}}
			\right)
			\langle 2^{k_{1}}\rangle^{-5}
			2^{\frac{3k_{1}}{2}}\epsilon_{1}^{3} \\
			&\lesssim
			\big(
			\sum_{2^{k_{2}}\le\bs^{-1}}
			2^{2k_{2}}
			+\sum_{2^{k_{2}}\ge\bs^{-1}}
			\bs^{-3}2^{-k_{2}}
			\big)
			\sum_{k}
			\langle 2^{k}\rangle^{-2}2^{k(\alpha-\frac{1}{2})}\epsilon_1^3 \\
			&\lesssim
			\bs^{-2+\delta_0}\epsilon_1^3.
			\end{align*}
			
			({\it Case}: $k\ll k_{1}\sim k_{2}$) The sum over $k$ with $2^{k}\le\bs^{-2}$ can be easily dealt with using the pointwise bound \eqref{ineq:pt} as follows.
			\begin{align*}
			&\sum_{ \substack{ k\ll k_{1}\sim k_{2} \\
					2^{k}\le\bs^{-2} }}
			{ I}_{2}^{k,k_{1},k_{2}}(s)\\
			&\lesssim
			\sum_{ \substack{ k\ll k_{1}\sim k_{2} \\
					2^{k}\le\bs^{-2} }}
			\langle2^{k}\rangle^{3}
			\| \mathbf m^{k,k_{1},k_{2}}(\xi,\eta)\|_{L^{\infty}}
			\| \beta_{k}\|_{L^{2}}
			\|\widehat{\widetilde P_{k_{2}}(\vert u\vert^{2})}\|_{L^{2}}
			\|\widehat{\widetilde P_{k_{1}}(v)}\|_{L^{2}} \\
			&\lesssim
			\sum_{ \substack{ k\ll k_{1}\sim k_{2} \\
					2^{k}\le\bs^{-2} }}
			\langle2^{k}\rangle^{3}
			2^{k_{1}(\alpha-2)}2^{-k_{2}}2^{\frac{3k}{2}}
			\langle 2^{k_{2}}\rangle^{-5}2^{\frac{3k_{2}}{2}}
			\langle2^{k_{1}}\rangle^{-5}2^{\frac{3k_{1}}{2}}
			\epsilon_{1}^{3}  \\
			&\lesssim
			\sum_{ \substack{k\ll k_{2} \\ 2^{k}\le\bs^{-2}}}
			\langle 2^{k}\rangle^{3}
			2^{\frac{3k}{2}} 2^{k_{2}\alpha}
			\langle2^{k_{2}}\rangle^{-10}\epsilon_{1}^{3}  \\
			&\lesssim
			\sum_{ 2^{k}\le\bs^{-2}}
			2^{\frac{3k}{2}} \epsilon_{1}^{3}
			\sum_{k_{2}}
			2^{k_{2}\alpha}\langle2^{k_{2}}\rangle^{-7} \\
			&\lesssim\bs^{-3} \epsilon_{1}^{3}.
			\end{align*}
			
			The remaining case can be estimated by applying Lemma \ref{LemmaB} with the bound $A(\mathbf m^{k, k_1, k_2} )$ from \eqref{lemmaC}, as follows.
			\begin{align*}
			&\sum_{ \substack{ k\ll k_{1}\sim k_{2} \\
					2^{k}\ge\bs^{-2} }}
			{ I}_{2}^{k, k_1, k_2}(s)
			\lesssim
			\sum_{ \substack{ k\ll k_{1}\sim k_{2} \\
					2^{k}\ge\bs^{-2} }}
			\langle2^{k}\rangle^{3}2^{k_{1}(\alpha-2)}
			2^{-k_{2}}
			\| \widetilde P_{k_{2}}(\vert u\vert^{2})\|_{L^{\infty}}
			\| \widetilde P_{k_{1}}(v)\|_{L^{2}} \\
			&\lesssim
			\sum_{ \substack{ k\ll  k_{2} \\
					2^{k}\ge\bs^{-2} }}
			\langle 2^{k}\rangle^{3}
			2^{k_{2}(\alpha-\frac{3}{2})}
			\min( \langle 2^{k_{2}}\rangle^{-2}\bs^{-3}, \ 2^{3k_{2}} )
			\langle 2^{k_{2}}\rangle^{-5}\epsilon_{1}^{3} \\
			&\lesssim
			\sum_{2^{k_{2}}\le\bs^{-1}}
			2^{k_{2}(\alpha+\frac{3}{2})}\epsilon_{1}^{3}
			\sum_{\bs^{-2}\le 2^{k}\le\bs^{-1}}1  \\
			&\qquad +\sum_{2^{k_{2}}\ge\bs^{-1}}
			\bs^{-3}
			2^{k_{2}(\alpha-\frac{3}{2})}
			\langle 2^{k_{2}}\rangle^{-7}
			\sum_{\substack{k\le k_{2} \\
					2^{k}\ge\bs^{-2}     }}
			\langle 2^{k}\rangle^{3}
			\epsilon_{1}^{3}     \\
			&\lesssim
			\bs^{-\alpha-\frac{3}{2}+\delta_0}
			\epsilon_{1}^{3}
			+\bs^{-\frac{5}{2}}
			\epsilon_{1}^{3}
			\sum_{2^{k_{2}}\ge\bs^{-1}}
			2^{k_{2}(\alpha-1)}
			\langle 2^{k_{2}}\rangle^{-3}
			\sum_{\substack{k\le k_{2} \\
					2^{k}\ge\bs^{-2}     }}
			\langle 2^{k}\rangle^{-1} \\
			&\lesssim
			\bs^{-2+\delta_0}\epsilon_1^3.
			\end{align*}

			({\it Case: }$k_{1}\ll k\sim k_{2}$) In this case we use Lemma \ref{LemmaB} with $A({\mathbf m}^{k, k_1, k_2})$ of \eqref{lemmaC} and get that
			\begin{align*}
			&  \sum_{k_{1}\le k \sim k_{1}}
			{I}_{2}^{k, k_1, k_2}(s)\\
			&\lesssim
			\sum_{k_{1}\ll k\sim k_{2} }
			\langle2^{k}\rangle^{3}2^{k(\alpha-1)}2^{-2k_{2}}
			\| \widetilde P_{k_{2}}(\vert u\vert^{2})\|_{L^{\infty}}
			\| \widetilde P_{k_{1}}(v)\|_{L^{2}} \\
			&\lesssim
			\sum_{k_{1}\ll k}
			\langle2^{k}\rangle^{3}2^{k(\alpha-3)}
			\min( \langle2^{k}\rangle^{-2}\bs^{-3}, \ 2^{3k} )
			\langle2^{k_{1}}\rangle^{-5}2^{\frac{3k_{1}}{2}}
			\epsilon_{1}^{3} \\
			&\lesssim
			\epsilon_1^3\sum_{2^{k}\le\bs^{-1}}
			2^{\alpha k}
			\sum_{k_{1}\le k}
			2^{\frac{3}{2}k_{1}}
			+ \epsilon_1^3\sum_{2^{k}\ge\bs^{-1}}
			\bs^{-3}2^{k(\alpha-2)}
			\sum_{k_{1}\le k}
			\langle2^{k_{1}}\rangle^{-5}2^{\frac{k_{1}}{2}} \\
			&\lesssim
			\bs^{-\alpha-1}\epsilon_1^3.
			\end{align*}
			
		\end{subsubsection}
		
	\end{subsection}

	\begin{subsection}{Proof of \eqref{eq:eps2} }
		By Plancherel's theorem we have
		$$\| x^2v \|_{H^{2}}
		\sim \| \left\langle\xi\right\rangle^{2}\widehat{x^{2}v}\|_{L^{2}}
		=\| \left\langle\xi\right\rangle^{2}\nabla^2\widehat{v}\|_{L^{2}}. $$
		Then the second derivative of $\widehat{v}$ can be written as
		$$
		\nabla^2\widehat{v}(t,\xi)
		={\mathbf J_{1}}(t,\xi)
		+{\mathbf J_{2}}(t,\xi)
		+{\mathbf J_{3}}(t,\xi)
		+{\mathbf J_{4}}(t,\xi).
		$$
		Here $\mathbf J_i$ are defined by
		\begin{align*}
		{\mathbf J_{1}}(t,\xi)
		&=
		ic_0  \int_{0}^{t}\int_{\mathbb{R}^{3}}
		e^{is\phi_\al(\xi, \eta)}
		\vert\eta\vert^{-2}
		\widehat{\vert u\vert^{2}}(\eta)
		(\nabla^2\widehat{v})(\xi-\eta)
		d\eta ds,  \\
		{\mathbf J_{2}}(t,\xi)
		&=
		2c_0\int_{0}^{t}
		s
		\int_{\mathbb{R}^{3}}
		\mathbf m(\xi, \eta) \otimes
		[e^{is\phi_\al(\xi, \eta)}
		\vert\eta\vert^{-2}
		\widehat{\vert u\vert^{2}}(\eta)
		(\nabla\widehat{v})(\xi-\eta)]
		d\eta
		ds,  \\
		{\mathbf J_{3}}(t,\xi)
		&=
		c_0 \int_{0}^{t}
		s
		\int_{\mathbb{R}^{3}}
		[\nabla_\xi \mathbf m(\xi, \eta)]
		e^{is\phi_\al(\xi, \eta)}
		\vert\eta\vert^{-2}
		\widehat{\vert u\vert^{2}}(\eta)
		\widehat{v}(\xi-\eta)
		d\eta
		ds, \\
		{\mathbf J_{4}}(t,\xi)
		&=
		-ic_0\int_{0}^{t}
		s^{2}
		\int_{\mathbb{R}^{3}}
		\mathbf m(\xi, \eta)
		\otimes  \mathbf m(\xi, \eta)
		e^{is\phi_\al(\xi, \eta)}
		\vert\eta\vert^{-2}
		\widehat{\vert u\vert^{2}}(\eta)
		\widehat{v}(\xi-\eta)
		d\eta ds,
		\end{align*}
		where $\mathbf m$ is defined as in \eqref{m}.
		
		\begin{subsubsection}{Estimate of ${\mathbf J_{1}}$}
			By \eqref{xvH2} and \eqref{ineq:L6} one can readily get
			\begin{align*}
			\|
			\langle\xi\rangle^{2}
			{\mathbf J_{1}}(t,\xi)
			\|_{L^{2}}    &\lesssim
			\int_{0}^{t}
			\|
			e^{is\vert\nabla\vert^{\alpha}}
			(\vert\cdot\vert^{-1}*\vert u\vert^{2})
			e^{is\vert\nabla\vert^{\alpha}}(x^{2}v)
			\|_{H^{2}}\,ds \\
			&\lesssim
			\bt^{2\delta_0}\epsilon_{1}^{3}.
			\end{align*}
		\end{subsubsection}
		
		\begin{subsubsection}{Estimate of ${\mathbf J_{2}}$}
			
			By dyadic decomposition in $\xi,\eta$ and $\xi-\eta$, we write
			\begin{align*}
			&\langle\xi\rangle^{2}
			{\mathbf J_{2}}(t,\xi)\\
			&= 2c_0 \sum_{k,k_{1},k_{2}\in\mathbb{Z}}
			\langle\xi\rangle^{2}
			\int_{0}^{t}
			s
			\int_{\mathbb{R}^{3}}
			\mathbf m_2^{k,k_{1},k_{2}}(\xi,\eta) \\
			&\qquad\qquad\qquad\qquad\qquad\qquad \otimes [e^{is\phi_\al(\xi, \eta)}
			\widehat{\widetilde P_{k_{2}}(\vert u\vert^{2})}(\eta)
			\widehat{\widetilde P_{k_{1}}(xv)}(\xi-\eta)]
			d\eta
			ds,
			\end{align*}
			where $\mathbf m_2^{k, k_1, k_2} = \mathbf m^{k, k_1, k_2}$ as in \eqref{mkkk}.
			So, it suffices to show that
			\[
			\sum_{k,k_{1},k_{2}\in\mathbb{Z}}
			\langle2^{k}\rangle^{2}
			{J_{2}}^{k,k_{1},k_{2}}(s)
			\lesssim \bs^{-2+2\delta_0}\epsilon_{1}^{3},
			\]
			where
			\[
			{J_{2}}^{k,k_{1},k_{2}}(s)
			=\bigg\|
			\int_{\mathbb{R}^{3}}
			\mathbf m_2^{k,k_{1},k_{2}}(\xi,\eta)\otimes
			[\widehat{\widetilde P_{k_{2}}\vert u\vert^{2}}(\eta)
			e^{-is|\xi-\eta|^\al}\widehat{\widetilde P_{k_{1}}(xv)}(\xi-\eta)]
			d\eta
			\bigg\|_{L_{\xi}^{2}}.
			\]
			We then divide the sum into three possible cases as above.
			
			({\it Case:} $k \sim k_{1}\ge k_{2}$)
			We proceed with the same method as in Section \ref{sec:i2}.
			Using Lemma \ref{LemmaB} with $A(\mathbf m_2^{k,k_1,k_2})$ of \eqref{lemmaC} and the Lemma \ref{lemma24}, we have
			\begin{align*}
			&\sum_{k\sim k_{1}\ge k_{2}}
			\langle2^{k}\rangle^{2}
			{ J_{2}}^{k,k_{1},k_{2}}(s) 
			\lesssim
			\sum_{k\sim k_{1}\ge k_{2}}
			\langle2^{k}\rangle^{2}
			2^{k(\alpha-2)}2^{-k_{2}}
			\| \widetilde P_{k_{2}}\vert u\vert^{2}\|_{L^{\infty}}
			\| \widetilde P_{k_{1}}(x v)\|_{L^{2}}   \\
			&\lesssim
			\sum_{k\sim k_{1}\ge k_{2}}
			\langle2^{k}\rangle^{2}
			2^{k(\alpha-2)}2^{-k_{2}}
			\min(\langle 2^{k_{2}}\rangle^{-2}\bs^{-3}, \
			2^{3k_{2}}
			)\\
			&\qquad\qquad\qquad\qquad\qquad\qquad\times
			\min(2^{k_{1}(2-\alpha)}\bs^{(3-\alpha)\delta_0},
			\langle2^{k_{1}}\rangle^{-3}\bs^{\delta_0}
			)
			\epsilon_{1}^{3}    \\
			&\lesssim
			\epsilon_1^3\sum_{2^{k_{2}}\le\bs^{-1}}
			2^{2k_{2}}
			\sum_{k\ge k_{2}}
			\langle2^{k}\rangle^{2}
			2^{k(\alpha-2)}
			\min(2^{k(2-\alpha)}\bs^{(3-\alpha)\delta_0},
			\langle2^{k}\rangle^{-3}\bs^{\delta_0}
			) \\
			&\quad
			+\bs^{-3}\epsilon_1^3
			\sum_{2^{k_{2}}\ge\bs^{-1}}
			\langle2^{k_{2}}\rangle^{-2}
			2^{-k_{2}}
			\sum_{k\ge k_{2}}
			\langle2^{k}\rangle^{2}
			2^{k(\alpha-2)}\\
			&\qquad\qquad\qquad\qquad\qquad\times
			\min(2^{k(2-\alpha)}\bs^{(3-\alpha)\delta_0},
			\langle 2^{k}\rangle^{-3}\bs^{\delta_0}
			)  \\
			&\lesssim
			\bs^{-2+2\delta_0}\epsilon_1^3.
			\end{align*}

			({\it Case:} $k \ll k_{1}\sim k_{2}$)
			At first, we deal with the case $2^{k}\le\bs^{-2}$. By \eqref{ineq:pt}) we have
			\begin{align*}
			&\sum_{\substack{k\ll k_{1}\sim k_{2}
					\\ 2^{k}\le\bs^{-2}}}
			\langle 2^{k}\rangle^{2}
			{J_{2}}^{k,k_{1},k_{2}}(s) \\
			&\lesssim
			\sum_{\substack{k\ll k_{1}\sim k_{2}
					\\ 2^{k}\le\bs^{-2}}}
			\langle2^{k}\rangle^{2}
			2^{k_{1}(\alpha-3)}
			\| \beta_{k} \|_{L^{2}}
			\| {\widetilde P_{k_{2}}(\vert u\vert^{2})}\|_{L^{2}}
			\| {\widetilde P_{k_{1}}(xv)}\|_{L^{2}}   \\    	
			&\lesssim
			\sum_{2^{k}\le\bs^{-2}}
			2^{\frac{3k}{2}}
			\sum_{k_{1}\gg k}
			2^{k_{1}(\alpha-\frac{3}{2})}
			\langle 2^{k_{1}}\rangle^{-2}\\
			&\qquad\qquad\qquad\qquad\times
			\min(2^{k_{1}(2-\alpha)}\bs^{(3-\alpha)\delta_0},
			\langle2^{k_{1}}\rangle^{-3}\bs^{\delta_0}
			) \epsilon_1^3\\
			&\lesssim
			\bs^{-3+\delta_0}\epsilon_1^3
			\big(
			\sum_{2^{k_{1}}\le\bs^{-\delta_0}}
			2^{\frac{k_{1}}{2}}\bs^{(2-\alpha)\delta_0}
			+\sum_{2^{k_{1}}\ge\bs^{-\delta_0}}
			\langle 2^{k_{1}}\rangle^{-5}2^{k_{1}(\alpha-\frac{3}{2})}
			\big) \\
			&\lesssim
			\bs^{-2+2\delta_0}\epsilon_1^3.
			\end{align*}
			
			For the case $2^k \ge \bs^{-2}$, we use Lemma \ref{LemmaB} as follows.
			\begin{align*}
			&\sum_{\substack{k\ll k_{1}\sim k_{2}
					\\ 2^{k}\ge\bs^{-2}}}
			\langle 2^{k}\rangle^{2}
			{ J_{2}}^{k, k_1, k_2}(s) 
			\lesssim
			\sum_{\substack{k\ll k_{1}\sim k_{2}
					\\ 2^{k}\ge\bs^{-2}}}
			\langle2^{k}\rangle^{2}
			2^{k_{1}(\alpha-3)}
			\|{\widetilde P_{k_{2}}(\vert u\vert^{2})}\|_{L^{\infty}}
			\|{\widetilde P_{k_{1}}(xv)}\|_{L^{2}}   \\    	
			&\lesssim
			\sum_{2^{k_{1}}\le\bs^{-1}}
			2^{2k_{1}}\bs^{\delta_0(3-\alpha)}\epsilon_1^3
			\sum_{\bs^{-2}\le 2^{k}\le\bs^{-1}}1 \\
			&\qquad
			+\sum_{2^{k_{1}}\ge\bs^{-1}}
			\bs^{-3}
			2^{k_{1}(\alpha-3)}
			\langle 2^{k_{1}}\rangle^{-2}\epsilon_1^3\\
			&\qquad\qquad\qquad\times \min(2^{k_{1}(2-\alpha)}\bs^{(3-\alpha)\delta_0},
			\langle2^{k_{1}}\rangle^{-3}\bs^{\delta_0}
			)
			\sum_{k\le k_{1}}
			\langle 2^{k}\rangle^{2}     \\
			&\lesssim
			\bs^{-2+2\delta_0}\epsilon_1^3.
			\end{align*}
			
			({\it Case:} $k_{1} \ll k\sim k_{2}$)
			Using Lemma \ref{LemmaB} as above, we get
			\begin{align*}
			&\sum_{k_{1}\ll k\sim k_{2}}
			\langle 2^{k}\rangle^{2}
			{ J_{2}}^{k, k_1, k_2}(s) 
			\lesssim
			\sum_{k_{1}\ll k\sim k_{2}}
			\langle2^{k}\rangle^{2}
			2^{k_{2}(\alpha-3)}
			\|{\widetilde P_{k_{2}}(\vert u\vert^{2})}\|_{L^{\infty}}
			\|{\widetilde P_{k_{1}}(xv)}\|_{L^{2}}   \\
			&\lesssim \epsilon_1^3
			\sum_{2^{k_{2}}\le\bs^{-1}}
			2^{\alpha k_{2}}
			\sum_{k_{1}\le k_{2}}
			2^{k_{1}(2-\alpha)}\bs^{-\delta_0(3-\alpha)} \\
			&\qquad
			+\bs^{-3+\delta_0}\epsilon_1^3
			\sum_{2^{k_{2}}\ge\bs^{-1}}
			2^{k_{2}(\alpha-3)}
			\sum_{k_{1}\le k_{2}}
			\min(2^{k_{1}(2-\alpha)}\bs^{(2-\alpha)\delta_0},
			\langle2^{k_{1}}\rangle^{-3}
			)      \\
			&\lesssim
			\bs^{-2+2\delta_0}\epsilon_1^3.
			\end{align*}
		\end{subsubsection}

		\begin{subsubsection}{Estimate of ${\mathbf J_{3}}$}\label{j3}
			Through the dyadic decomposition in $\xi,\eta$ and $\xi-\eta$,
			it suffices to show that
			\[
			\sum_{k,k_{1},k_{2}\in\mathbb{Z}}
			\langle2^{k}\rangle^{2}
			{ J_{3}}^{k,k_{1},k_{2}}(s)
			\lesssim
			\bs^{-2+2\delta_0}\epsilon_{1}^{3},
			\]
			where
			\[
			{ J_{3}}^{k,k_{1},k_{2}}(s)
			=\bigg\|
			\int_{\mathbb{R}^{3}}
			\mathbf m_3^{k,k_{1},k_{2}}(\xi,\eta)
			\widehat{\widetilde P_{k_{2}}(\vert u\vert^{2})}(\eta)
			e^{-is|\xi-\eta|^\al}\widehat{\widetilde P_{k_{1}}(v)}(\xi-\eta)
			d\eta
			\bigg\|_{L_{\xi}^{2}}
			\]
			and
			\[
			\mathbf m_3^{k,k_{1},k_{2}}(\xi,\eta)
			= [\nabla_\xi \mathbf m(\xi,\eta)]
			\vert\eta\vert^{-2}
			\beta_{k}(\xi)
			\beta_{k_{1}}(\xi-\eta)
			\beta_{k_{2}}(\eta).
			\]
			Direct calculation gives: If $k \lesssim k_1$, then for any positive integers $\ell_1, \ell_2$
			\begin{equation}\label{ineq:pt2}
			\begin{split}
			\left\vert
			\nabla_{\xi}^{\ell_1}\nabla_{\eta}^{\ell_2}
			\mathbf m_3^{k,k_{1},k_{2}}(\xi,\eta)
			\right\vert
			&\lesssim
			\min(2^{k},2^{k_{1}})^{\alpha-2}
			\max(2^{k},2^{k_{1}})^{-1}
			2^{-k_{2}}
			2^{-\ell_1 k}
			2^{-\ell_2 k_{2}}\\
			&\qquad\times
			\widetilde \beta_{k}(\xi)\widetilde\beta_{k_{1}}(\xi-\eta)\widetilde\beta_{k_{2}}(\eta).
			\end{split}
			\end{equation}
			And also from this we have
			\begin{align}\begin{aligned}{\label{lemmaC2}}
			&\left\|
			\iint_{\mathbb{R}^{3}\times\mathbb{R}^{3}}
			e^{ix\cdot\xi}e^{iy\cdot\eta}
			\mathbf m_{3}^{k,k_{1},k_{2}}(\xi,\eta)
			d\xi d\eta
			\right\|_{L_{x,y}^{1}}\\
			&\qquad\qquad\qquad\lesssim
			\min (2^{k},2^{k_{1}})^{\alpha-2}
			\max (2^{k},2^{k_{1}})^{-1}
			2^{-k_{2}}.
			\end{aligned}\end{align}
			
			Now we divide the sum into three parts:
			\[
			\sum_{k,k_{1},k_{2}\in\mathbb{Z}}
			\langle2^{k}\rangle^{2}
			{ J_{3}}^{k,k_{1},k_{2}}(s)
			\lesssim
			\left(
			\sum_{k\sim k_{1}\ge k_{2}}
			+\sum_{k\ll k_{1}\sim k_{2}}
			+\sum_{k_{1}\ll k\sim k_{2}}
			\right)
			\langle2^{k}\rangle^{3}
			{J_{3}}^{k,k_{1},k_{2}}(s).
			\]

			({\it Case:} $k\sim k_{1}\ge k_{2}$) If $\alpha\ge\frac{3}{2},$
			we can obtain the desired bound by applying Lemma \ref{LemmaB} with $A(\mathbf m_3^{k, k_1, k_2})$ of \eqref{lemmaC2}.
			On the other hand, if $\alpha<\frac{3}{2},$ the sum over $2^{k}\le\bs^{-2}$ can be estimated by using just the pointwise bound \eqref{ineq:pt2} as before, and the remaining case can be treated by applying the Lemma \ref{LemmaB}.
			
			({\it Case:} $k\ll k_{1}\sim k_{2}$) In this case the condition $\delta_0 \ge \frac{17}{12} - \frac{5\al}{6}$ is necessary.
			Taking the sum over $2^{k}\le\bs^{-\frac{5}{3}}$, we estimate
			\begin{align*}
			\sum_{\substack{k\ll k_{1}\sim k_{2}\\
					2^{k}\le\bs^{-\frac{5}{3}}}}
			\langle2^{k}\rangle^{2}&
			{ J_{3}}^{k,k_{1},k_{2}}(s)\\
			&\lesssim
			\sum_{\substack{k\ll k_{1}\sim k_{2}\\
					2^{k}\le\bs^{-\frac{5}{3}}}}	
			\|\mathbf m_3^{k,k_{1},k_{2}} \|_{L^{\infty}}
			\| \beta_{k} \|_{L^{2}}
			\| {\widetilde P_{k_{2}}(\vert u\vert^{2})}\|_{L^{2}}
			\| {\widetilde P_{k_{1}}(v)}\|_{L^{2}}   \\    	
			&\lesssim
			\sum_{2^{k}\le\bs^{-\frac{5}{3}}}
			2^{k(\alpha-\frac{1}{2})}
			\sum_{k_{1}\gg k}
			2^{k_{1}}
			\langle 2^{k_{1}}\rangle^{-10}
			\epsilon_{1}^{3}. \\
			&\lesssim
			\bs^{-\frac{5}{3}(\alpha-\frac{1}{2})}
			\epsilon_{1}^{3}.
			\end{align*}
			And the sum over $2^{k}\ge\bs^{-\frac{5}{3}}$ follows from Lemma \ref{LemmaB} with $A(\mathbf m_3^{k, k_1, l_2} )$ of \eqref{lemmaC2}:
			\begin{align*}
			\sum_{\substack{k\ll k_{1}\sim k_{2}\\
					2^{k}\ge\bs^{-\frac{5}{3}}}}
			\langle2^{k}\rangle^{2} &
			{ J_{3}}^{k,k_{1},k_{2}}(s)\\
			&\lesssim
			\sum_{\substack{k\ll k_{1}\sim k_{2}\\
					2^{k}\ge\bs^{-\frac{5}{3}} }}
			\langle 2^{k}\rangle^2
			2^{k(\alpha-2)}
			2^{-2k_{1}}
			\| {\widetilde P_{k_{2}}(\vert u\vert^{2})}\|_{L^{\infty}}
			\| {\widetilde P_{k_{1}}(v)}\|_{L^{2}}   \\
			&\lesssim
			\sum_{2^{k_{1}}\le\bs^{-1}}
			2^{\frac{5k_{1}}{2}}
			\sum_{\bs^{-\frac{5}{3}}\le 2^{k}\le\bs^{-1}}
			2^{k(\alpha-2)}
			\epsilon_{1}^{3} \\
			&\qquad+
			\bs^{-3}
			\sum_{2^{k_{1}}\ge\bs^{-1}}
			2^{-\frac{k_{1}}{2}}
			\langle 2^{k_{1}}\rangle^{-5}
			\sum_{2^{k}\ge\bs^{-\frac{5}{3}}}
			2^{k(\alpha-2)}
			\epsilon_{1}^{3} \\
			&\lesssim
			\bs^{-\frac{5}{2}+\frac{5}{3}(2-\alpha)}
			\epsilon_{1}^{3}.       	
			\end{align*}
			If $\delta_0 \ge \frac{17}{12} - \frac{5\al}{6}$, then the last two terms of the above estimates have the desired bound.

			({\it Case:} $k_{1}\ll k\sim k_{2}$)
			By using Lemma \ref{LemmaB} again, we have
			\begin{align*}
			\sum_{k_{1}\ll k\sim k_{2}}
			\langle 2^{k}\rangle^{2}
			{J_{3}}^{k,k_{1},k_{2}}(s)
			\lesssim
			\bs^{-\frac{1}{2}-\alpha+\delta_0}
			\epsilon_{1}^{3}.
			\end{align*}
			Therefore, if $\al \ge \frac32$, then we get the desired bound.
			
		\end{subsubsection}

		\begin{subsubsection}{Estimate of ${\mathbf J_{4}}$} By dyadic decomposition $\langle\xi\rangle^{2}
			{\mathbf J_{4}}(s,\xi)$ can be written as
			\begin{align*}
			\langle\xi\rangle^{2}
			{\mathbf J_{4}}(s,\xi)
			= -i c_0\sum_{k,k_{1},k_{2}\in\mathbb{Z}}
			\langle\xi\rangle^{3}
			&\int_{0}^{t}
			s^{2}
			\int_{\mathbb{R}^{3}}
			\mathbf m_4^{k,k_{1},k_{2}}(\xi,\eta)\\
			&\times
			e^{is\phi_\al(\xi, \eta)} \widehat{\widetilde P_{k_{2}}(\vert u\vert^{2})}(\eta)
			\widehat{\widetilde P_{k_{1}}(v)}(\xi-\eta)
			d\eta
			ds,
			\end{align*}
			where $$\mathbf m_4^{k, k_1, k_2} = \mathbf m(\xi,\eta) \otimes  \mathbf m^{k,k_{1},k_{2}}(\xi,\eta).$$
			It is also enough to show the following estimate
			\[
			\sum_{k,k_{1},k_{2}\in\mathbb{Z}}
			\langle2^{k}\rangle^{3}
			{ J_{4}}^{k,k_{1},k_{2}}(s)
			\lesssim
			\bs^{-3+\delta_0}\epsilon_{1}^{3},
			\]
			where
			\[
			{ J_{4}}^{k,k_{1},k_{2}}(s)
			=\bigg\|
			\int_{\mathbb{R}^{3}}
			\mathbf m_4^{k, k_1, k_2}
			\widehat{\widetilde P_{k_{2}}(\vert u\vert^{2})}(\eta)
			e^{-is|\xi-\eta|^\al}\widehat{\widetilde P_{k_{1}}(v)}(\xi-\eta)
			d\eta
			\bigg\|_{L_{\xi}^{2}}.
			\]
			From $(\ref{ineq:pt})$ and Leibniz rule, it follows that if $k\lesssim k_{1}$, then for any positive integers $\ell_1, \ell_2$
			%\begin{equation}{\label{ineq:pt3}}
			\begin{align}\begin{aligned}\label{ineq:pt4}
			\left\vert
			\nabla_{\xi}^{\ell_1}\nabla_{\eta}^{\ell_2}
			{\mathbf m}_4^{k,k_{1},k_{2}}(\xi,\eta)
			\right\vert
			&\lesssim
			\max(2^{k},2^{k_{1}},2^{k_{2}})^{2\alpha-4}
			2^{-\ell_1 k}2^{-\ell_2 k_{2}} \\
			&\qquad\qquad \times
			\widetilde\beta_{k}(\xi)\widetilde\beta_{k_{1}}(\xi-\eta)\widetilde\beta_{k_{2}}(\eta).
			\end{aligned}\end{align}
			%\end{equation}
			Then one can easily observe from this that
			\begin{equation}{\label{ineq:lemmac3}}
			\left\|
			\iint_{\mathbb{R}^{3}\times\mathbb{R}^{3}}
			e^{ix\cdot\xi}e^{iy\cdot\eta}
			{\mathbf m}_4^{k,k_{1},k_{2}}(\xi,\eta)d\xi d\eta
			\right\|_{L_{x,y}^{1}}
			\lesssim
			\max(2^{k},2^{k_{1}},2^{k_{2}})^{2\alpha-4}.
			\end{equation}
			
			Now dividing the sum into three parts, we have
			\[
			\sum_{k,k_{1},k_{2}\in\mathbb{Z}}
			\langle2^{k}\rangle^{2}
			{J_4}^{k,k_{1},k_{2}}(s)
			\lesssim
			\big(
			\sum_{k\sim k_{1}\le k_{2}}
			+\sum_{k\ll k_{1}\sim k_{2}}
			+\sum_{k_{1}\ll k\sim k_{2}}
			\big)
			\langle2^{k}\rangle^{2}
			{J_4}^{k,k_{1},k_{2}}(s).
			\]
			%In this subsection 2.3.4, we restrict the range of $\alpha$
			%to $\frac{5}{4}<\alpha<2.$
			
			({\it Case:} $k\sim k_{1}\ge k_{2}$) We can easily obtain the bound applying the Lemma \ref{LemmaB}
			with $A({\mathbf m}_4^{k, k_1, k_2})$ of \eqref{ineq:lemmac3}.
			\begin{align*}
			\sum_{k\sim k_{1}\le k_{2}}
			\langle2^{k}\rangle^{2}
			{J_4}^{k,k_{1},k_{2}}(s)
			&\lesssim
			\sum_{2^{k_{2}}\le\bs^{-1}}
			2^{3k_2}
			\sum_{k\ge k_{2}}
			\langle 2^{k}\rangle^{-2}2^{k(2\alpha-\frac{5}{2})}\epsilon_1^3\\
			&\quad\quad
			+\bs^{-3}
			\sum_{2^{k_2}\ge\bs^{-1}}
			\langle 2^{k_{2}}\rangle^{-2}
			\sum_{k\ge k_{2}}
			\langle 2^{k}\rangle^{-2}
			2^{k(2\alpha-\frac{5}{2})}\epsilon_1^3\\
			&\lesssim
			\bs^{-3+\delta_0}\epsilon_1^3.
			\end{align*}
			
			({\it Case:} $k\ll k_{1}\sim k_{2}$) Taking the sum over $ 2^{k}\le\bs^{-2} $, by the pointwise estimate we have
			\begin{align*}
			\sum_{\substack{k\ll k_{1}\sim k_{2}\\
					2^{k}\le\bs^{-1}}}
			\langle 2^{k}\rangle^{2}
			{J_4}^{k,k_{1},k_{2}}(s)
			&\lesssim
			\sum_{2^{k}\le\bs^{-2}}
			2^{\frac{3k}{2}}
			\sum_{k_{1}\ge k}
			2^{k_{1}(2\alpha-1)}
			\langle 2^{k_{1}}\rangle^{-5}\epsilon_1^3\\
			&\lesssim
			\bs^{-3}\epsilon_1^3.
			\end{align*}
			The remaining case can also be estimated by Lemma \ref{LemmaB} as
			\begin{align*}
			&    \sum_{\substack{k\ll k_{1}\sim k_{2}\\
					2^{k}\ge\bs^{-2}}}
			\langle2^{k}\rangle^{2}
			{J_4}^{k,k_{1},k_{2}}(s)\\
			&\lesssim
			\epsilon_1^3\sum_{2^{k_{1}}\le\bs^{-1}}
			2^{(2\alpha-1)k_{1}}
			\sum_{\substack{k\le k_{1}\\
					2^{k}\ge\bs^{-2} }}
			2^{\frac{3k}2} \\
			&\qquad +\bs^{-3}\epsilon_1^3
			\sum_{2^{k_{1}}\ge\bs^{-1}}
			\langle 2^{k_{1}}\rangle^{-4}
			2^{k_{1}(2\alpha-\frac{5}{2})}
			\sum_{\substack{k\le k_{1} \\
					2^{k}\ge\bs^{-2}}}
			\langle 2^{k_{1}}\rangle^{-1} \\
			&\lesssim
			\bs^{-3+\delta_0}\epsilon_1^3.
			\end{align*}
			
			({\it Case:} $k_{1}\ll k\sim k_{2}$) By Lemma \ref{LemmaB} with \eqref{ineq:lemmac3} we finally have
			\begin{align*}
			\sum_{k_{1}\ll k\sim k_{2}}
			\langle2^{k}\rangle^{2}
			{J_4}^{k,k_{1},k_{2}}(s)
			&\lesssim
			\epsilon_1^3\sum_{2^{k}\le\bs^{-1}}
			2^{(2\alpha-1)k}
			\sum_{k_{1}\le k}
			2^{\frac{3k_{1}}{2}} \\
			&\qquad +\bs^{-3}\epsilon_1^3
			\sum_{2^{k}\ge\bs^{-1}}
			2^{k(2\alpha-4)}
			\sum_{k_{1}\le k}
			2^{\frac{3k_{1}}{2}}
			\langle 2^{k_{1}}\rangle^{-5}   \\
			&\lesssim
			\bs^{-2\alpha-\frac{1}{2}+\delta_0}\epsilon_1^3\\
			&\lesssim \bs^{-3 + \delta_0}\epsilon_1^3, \;\;\mbox{if}\;\; \al \ge \frac54.
			\end{align*}

		\end{subsubsection}
		
	\end{subsection}

	$\newline$
	
	\section{Modified scattering}
	
	We consider $L^\infty$-control for $\bxi^5 \widehat{v}$ and show the modified scattering Theorem \ref{main thm}.
	At first we have the following.
	\begin{prop}\label{Linfty}
		Let $\frac53 < \al < 2$, $N > \frac{75\al+35}{3\al-5}$ and
		$$
		0 < \delta_0 < \min\left(\frac{3\al-5}{100(\al+1)}(N-5) - \frac35, \frac{2-\al}{3\al}, \frac1{100}\right).
		$$
		Suppose that $u$ is the global solution to \eqref{equation}  such that $\| u \|_{\Sigma_{T}}\le \epsilon_{1}$  for some $\epsilon_1, T > 0$ and $u_0$ satisfies \eqref{initialvalue} with $\epsilon_0 \le \epsilon_1$. Then we have
		\begin{align}\label{Linfty-bound}
		\sup_{t \in [0, T]}\|\bxi^5 \widehat{v}\|_{L^\infty} \le \epsilon_0 + {C\epsilon_1^{3}}.
		\end{align}
	\end{prop}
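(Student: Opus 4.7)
The plan is to exploit that $B_\alpha(s,\xi)$ is real-valued, so multiplication by $e^{-iB_\alpha(s,\xi)}$ is an $L^\infty_\xi$ isometry. Setting $F(s,\xi) := e^{-iB_\alpha(s,\xi)}\widehat v(s,\xi)$, the integral equation \eqref{integraleqn}--\eqref{Duhamel} gives
\begin{equation*}
\partial_s F(s,\xi) \;=\; e^{-iB_\alpha(s,\xi)}\bigl[I(s,\xi) - i(\partial_s B_\alpha)(s,\xi)\,\widehat v(s,\xi)\bigr],
\end{equation*}
so $\|\bxi^5\widehat v(t)\|_{L^\infty_\xi} = \|\bxi^5 F(t)\|_{L^\infty_\xi}$ and it suffices to prove the pointwise-in-$\xi$ bound
\begin{equation*}
\sup_\xi \bxi^5 \bigl|I(s,\xi) - i(\partial_s B_\alpha)(s,\xi)\,\widehat v(s,\xi)\bigr| \;\lesssim\; \epsilon_1^3\,\langle s\rangle^{-1-\delta'}
\end{equation*}
for some $\delta'>0$; integrating in $s$ and using $\epsilon_0\le\epsilon_1$ then produces \eqref{Linfty-bound}.

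To implement this I first separate $\xi$ via the cutoff $\varphi(s^{-\theta}\xi)$. Outside its support, $|\xi|\gtrsim s^\theta$, one has $\partial_s B_\alpha\equiv 0$, so only $I(s,\xi)$ must be estimated; here, dyadic decomposition combined with Bernstein's inequality absorbs the weight $\bxi^5$ into the Sobolev norm $\|u\|_{H^N}$, and the large-$N$ hypothesis $N > \frac{75\alpha+35}{3\alpha-5}$, together with the constraint $\delta_0<\frac{3\alpha-5}{100(\alpha+1)}(N-5)-\tfrac{3}{5}$, outweighs the $\langle s\rangle^{2\delta_0}$ growth of the weighted norms and yields an $O(s^{-1-\delta'})$ contribution. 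Inside the cutoff region $|\xi|\lesssim s^\theta$, I Littlewood--Paley decompose in $\eta$, $\sigma$, $\xi-\eta$, $\eta+\sigma$ and compare $I(s,\xi)$ to the resonant piece
\begin{equation*}
I_{\mathrm{res}}(s,\xi) \;=\; ic_1\,\widehat v(s,\xi)\int_{\mathbb R^3}|\widehat v(s,\sigma)|^2\,\mathcal F^{-1}(|\eta|^{-2})(s\alpha\mathbf z)\,d\sigma, \quad \mathbf z=\frac{\xi}{|\xi|^{2-\alpha}}-\frac{\sigma}{|\sigma|^{2-\alpha}}.
\end{equation*}
Since $\mathcal F^{-1}(|\eta|^{-2})(y)\propto|y|^{-1}$, the definition of $B_\alpha$ in \eqref{balpha} (together with $c_1 = -2(2\pi)^{-5}\lambda$) aligns the constants so that $I_{\mathrm{res}}(s,\xi) = i(\partial_s B_\alpha)(s,\xi)\,\widehat v(s,\xi)$ on this region.

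For the difference $I-I_{\mathrm{res}}$ I split $\eta$ at the scale $|\eta|\sim s^{-\theta_1}$ for a suitably chosen $\theta_1>\theta$. In the high-$\eta$ piece $|\eta|\gtrsim s^{-\theta_1}$ I integrate by parts twice against the phase $\phi$, using the lower bound $|\nabla_\sigma\phi|\sim|\mathbf z|$ (and its analogue in $\eta$) together with the $|\mathbf z|$-estimates announced for Section~6; each integration by parts produces an $s^{-1}$ factor and distributes derivatives onto weighted quantities controlled by $\|xv\|_{H^3}$, $\|x^2v\|_{H^2}$, and $\|\bxi^5\widehat v\|_{L^\infty}$, yielding the required decay. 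In the low-$\eta$ piece $|\eta|\lesssim s^{-\theta_1}$ I Taylor-expand the phase as $\phi(\xi,\eta,\sigma) = \alpha\eta\cdot\mathbf z + R$ with $|R|\lesssim|\eta|^\alpha$ (the non-smoothness of $|\cdot|^\alpha$ at $0$ blocks a smoother remainder), and replace $\widehat v(\xi-\eta)$, $\widehat v(\eta+\sigma)$ by their Taylor approximations around $\eta=0$, with errors controlled by $\nabla_\xi\widehat v = \widehat{xv}$ and $\nabla^2_\xi\widehat v = \widehat{x^2 v}$. The leading remainder is then bounded by
\begin{equation*}
s\int_{|\eta|\lesssim s^{-\theta_1}}|\eta|^{-2+\alpha}\,d\eta\cdot\|\bxi^5\widehat v\|_{L^\infty}^3 \;\lesssim\; s^{1-\theta_1(\alpha+1)}\epsilon_1^3,
\end{equation*}
which is $O(s^{-1-\delta'})$ precisely when $\theta_1(\alpha+1)>2$. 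Consistency of this lower bound on $\theta_1$ with the choice $\theta=\tfrac{3\alpha-5}{40(\alpha+1)}$ and with the weighted-energy growth of Proposition~\ref{scattering} forces the restriction $\alpha>\tfrac{17}{10}$.

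The principal obstacle is this last step: propagating the non-smooth Taylor-error bound uniformly in $\xi\in\operatorname{supp}\varphi(s^{-\theta}\cdot)$ and in $\sigma\in\mathbb R^3$ while preserving the $\bxi^5$ weight and respecting the trilinear $|\widehat v|^3$ structure. A second layer of bookkeeping is required for the trilinear multiplier sums (via Lemma~\ref{LemmaB} and the $|\mathbf z|$-bounds from Section~6), especially in the regime where $\min(2^k,2^{k_1},2^{k_2})$ is very low so that the kernel $|\eta|^{-2}$ is only marginally integrable; this is organised by the same case analysis ($k\sim k_1\ge k_2$, $k\ll k_1\sim k_2$, $k_1\ll k\sim k_2$) as in Sections~\ref{sec:i2} and~\ref{j3}. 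Once these pieces are assembled the pointwise estimate above holds and the proposition follows.
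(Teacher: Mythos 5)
Your proposal follows essentially the same route as the paper's proof. The paper reduces Proposition~\ref{Linfty} to Proposition~\ref{diff-prop} (for $T\ge 2$; the case $T\le 2$ being handled by \eqref{gronwal}), and Proposition~\ref{diff-prop} is proved exactly by the scheme you outline: estimate $\partial_s w$ for $w=e^{-iB_\alpha}\widehat v$ by splitting into the resonant piece $i\partial_s B_\alpha\,\widehat v$ on $\operatorname{supp}\varphi_s$, an $\eta$-split at a time-dependent scale $2^{n_0}\sim s^{-\theta_1}$ with $\theta_1=\frac{2+2\delta_0}{\alpha+1}+\frac{10\theta}{3}$, Taylor expansion of the phase and of the profiles (using $\widehat{xv}$, $\widehat{x^2v}$) in the low-$\eta$ piece, integration by parts twice in the high-$\eta$ piece, and Bernstein/Sobolev absorption of $\bxi^5$ in the regime $|\xi|\gtrsim s^\theta$. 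Two small remarks. First, the paper splits $\eta$ before splitting $\xi$, so the high-$\eta$ estimate \eqref{ineq:3} is proved uniformly in $\xi$ and the $\xi$-cutoff is only used on the low-$\eta$ piece; your ordering (split $\xi$ first) is a benign reorganization but means the high-$\xi$ part must still account for the high-$\eta$ contributions via the integration-by-parts mechanism, not just Bernstein. Second, Proposition~\ref{Linfty} itself only needs $\alpha>\frac53$; the stronger restriction $\alpha>\frac{17}{10}$ in Theorem~\ref{main thm} arises only when one also imposes $\delta_0\ge\frac{17}{12}-\frac{5\alpha}{6}$ from Proposition~\ref{scattering}, so attributing it here is slightly off. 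The quantitative trilinear bounds you flag as ``the principal obstacle'' are exactly the content of the paper's estimates \eqref{ineq:1}--\eqref{ineq:3} and Sections 5.1--5.3, so your proposal is a correct roadmap of the argument rather than a complete proof.
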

	\begin{proof}
		If $T\le2$, \eqref{Linfty-bound} follows from \eqref{gronwal}.
		The case where $T\ge2$ follows straightforwardly from Proposition \ref{diff-prop} below by considering $t_1 = 1$ and $t_2 = T$.	
	\end{proof}

	Given a solution $u$ of $(\ref{equation})$ satisfying the priori bounds $\| u\|_{\Sigma_{T}}\le \epsilon_{1},$
	we define for any $t\in[0,T]$ and $\xi\in\mathbb{R}^{3}$
	$$
	B_{\alpha}(t,\xi)
	= \frac{c_0}{4\alpha\pi}\int_{0}^{t}\int_{\mathbb{R}^{3}}
	\left\vert
	\frac{\xi}{\vert\xi\vert^{2-\alpha}}
	-\frac{\sigma}{\vert\sigma\vert^{2-\alpha}}
	\right\vert^{-1}
	\left\vert
	\widehat{v}(s,\sigma)
	\right\vert^{2}
	d\sigma
	\varphi_{s}(\xi)
	\frac{1}{\bs}
	ds,
	$$
	where
	$$
	\varphi_{s}(\xi)
	=\varphi\left(s^{-\theta}\xi\right),\;\; \theta = \frac{(3\al-5)}{40(\alpha+1)}.
	$$
	for a smooth compactly supported function $\varphi$. % One may take $\theta \approx \frac1{500}$ when $1 + \frac7{10} \le \al < 2$.
	We also define the modified profile
	$$
	w(t, \xi):=e^{-iB_\al(t,\xi)}\widehat{v}(t,\xi).
	$$
	From Lemma \ref{lowbound} we have the following a priori estimate.
	\begin{align}\label{ba}
	|B_\al| \lesssim \int_0^t\int (|\sigma|^{-(\al-1)} + |\xi-\sigma|^{-1}|\sigma|^{2-\al})|\widehat u(s, \sigma)|^2\,d\sigma \varphi_s(\xi) \frac{ds}{1+s}.
	\end{align}
	For the inner integral we have
	%\begin{align}\begin{aligned}\label{ba2}
	%\int (|\sigma|^{-(\al-1)} &+ |\sigma|^{2-\al}\,|\xi-\sigma|^{-1})|\widehat u(s, \sigma)|^2\,d\sigma\\
	% &\lesssim \||x|^\frac{\al-1}2 v\|_{L^2}^2 + \|\widehat{|x|^\frac{2-\al}2 v}\|_{\dot H^\frac12}^2\\
	%&\lesssim \||x|^\frac12 v\|_{L^2}^2 + \||x|^\frac{3-\al}2v\|_{L^2}^2 \lesssim \|v\|_{L^2}^2 + \|xv\|_{L^2}^2 < +\infty.
	%\end{aligned}\end{align}
	\begin{align}\begin{aligned}\label{ba2}
	\int (|\sigma|^{-(\al-1)} &+ |\sigma|^{2-\al}\,|\xi-\sigma|^{-1})|\widehat u(s, \sigma)|^2\,d\sigma\\
	&\lesssim \||x|^\frac{\al-1}2 v\|_{L^2}^2 +
	\|u\|_{\dot{H}^{\frac{2-\alpha}{2}}}^{2}
	+\||\cdot|^{\frac{2-\alpha}{2}}\widehat{u}\|
	_{L^{\infty}}^{2} \\
	&\lesssim \|xv\|_{H^{3}}^2+\|u\|_{H^{N}}^2
	+\|\langle\cdot\rangle^{5}\widehat{u}\|_{L^{\infty}}^{2}<\infty
	\end{aligned}\end{align}
	Therefore $B_\al$ is well-defined and real-valued.

	The main goal of this section is to show the following.
	
	\begin{prop}\label{diff-prop}
		Let $\frac53 < \al < 2$. Assume that $u\in C([0,T]; H^{N})$ satisfies a priori bound $ \| u \|_{\Sigma_{T}}\le\epsilon_{1} $ for $N, \delta_0 > 0$ as in Proposition \ref{Linfty}.
		Then there holds
		\begin{equation}\label{difference}
		\sup_{t_{1}\le t_{2}\in[1,T]}
		\langle t_{1}\rangle^{\delta_0}
		\|\bxi^{5}
		\big(w(t_{2},\xi) - w(t_{1},\xi)\big)
		\|_{L_{\xi}^{\infty}}
		\lesssim \epsilon_{1}^{3}.
		\end{equation}
		%In particular,
		%\begin{equation}
		%  \sup_{t\in[0,T]}
		%    \|(1+\vert\xi\vert)^{a}
		%    \widehat{v}(t,\xi)\|_{L_{\xi}^{\infty}}
		%  \le
		%   \epsilon+C\epsilon_{1}^{3}.
		%\end{equation}
	\end{prop}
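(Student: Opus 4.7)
The plan is to reduce the statement to a pointwise-in-time decay estimate for $\partial_s w$ and then integrate. Since $\partial_s \widehat v = I(s,\xi)$, differentiating $w = e^{-iB_\alpha}\widehat v$ gives
$$
w(t_2,\xi) - w(t_1,\xi) = \int_{t_1}^{t_2} e^{-iB_\alpha(s,\xi)}\bigl[\,I(s,\xi) - i\,(\partial_s B_\alpha)(s,\xi)\widehat v(s,\xi)\,\bigr]\,ds,
$$
so it suffices to prove the uniform bound $\|\langle\xi\rangle^5 [I(s,\xi) - i(\partial_s B_\alpha)\widehat v(s,\xi)]\|_{L^\infty_\xi} \lesssim \langle s\rangle^{-1-\delta_0}\epsilon_1^3$ for $s\ge 1$; the $\langle t_1\rangle^{-\delta_0}$ bound in \eqref{difference} then follows from $\int_{t_1}^{\infty} s^{-1-\delta_0}\,ds \lesssim \langle t_1\rangle^{-\delta_0}$.

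The argument splits according to the cutoff $\varphi_s(\xi) = \varphi(s^{-\theta}\xi)$ in the definition of $B_\alpha$. On the outer region $\{|\xi| \gtrsim s^\theta\}$ the correction vanishes, $\partial_s B_\alpha\equiv 0$, so one only needs $|\langle\xi\rangle^5 I(s,\xi)|\lesssim s^{-1-\delta_0}\epsilon_1^3$. This is handled by expanding $I$ in its triple-integral form \eqref{Duhamel}, performing the same Littlewood--Paley decomposition in $\xi,\eta,\sigma$ used in Section 4, and applying Lemma \ref{LemmaB} together with the multiplier bounds for $\mathbf m^{k,k_1,k_2}$; the gain compared with Section 4 is that here we are controlling the $L^\infty_\xi$ norm (rather than weighted $L^2_\xi$) and $|\xi|$ is constrained to be $\gtrsim s^\theta$, which through the time-decay estimate (Proposition \ref{prop1}) and the $\Sigma_T$ control provides an extra factor $s^{-\delta_0}$ beyond the base decay rate $s^{-1}$ obtained from one factor of $|u|^2$.

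On the inner region $\{|\xi|\lesssim s^\theta\}$, where $\varphi_s \equiv 1$, the cancellation between $I$ and $i(\partial_s B_\alpha)\widehat v$ must be exhibited explicitly. Following the heuristic from the introduction, I fix a dyadic threshold $2^n\sim s^{-1+2\delta_0}$ and decompose the $\eta$-integration in \eqref{Duhamel}:
\[
I(s,\xi) = ic_1 \iint e^{is\phi(\xi,\eta,\sigma)}|\eta|^{-2}\widehat v(\xi-\eta)\widehat v(\eta+\sigma)\overline{\widehat v(\sigma)}\,d\eta\,d\sigma = I_{\rm low} + I_{\rm high}.
\]
For $I_{\rm high}$ (where $|\eta|\gtrsim 2^n$), the lower bound on $|\nabla_\eta\phi|$ from Lemma \ref{lowbound} permits two integrations by parts in $\eta$, producing a gain of $s^{-2}\cdot 2^{-2n}$ against the volume and moment loss controlled via $\|xv\|_{H^3}$ and $\|x^2 v\|_{H^2}$; summing over the dyadic scales gives $o(s^{-1-\delta_0})$. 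For $I_{\rm low}$ (where $|\eta|\lesssim 2^n$), I replace $\widehat v(\xi-\eta)$, $\widehat v(\eta+\sigma)$ by $\widehat v(\xi)$, $\widehat v(\sigma)$ and $\phi$ by its linearization $\alpha\eta\cdot\mathbf z$ with $\mathbf z = \xi/|\xi|^{2-\alpha} - \sigma/|\sigma|^{2-\alpha}$; the resulting principal term
\[
ic_1\,\widehat v(s,\xi)\int|\widehat v(s,\sigma)|^2\,\Bigl(\int e^{i\alpha s\eta\cdot\mathbf z}|\eta|^{-2}\,d\eta\Bigr)d\sigma \;=\; \frac{C\lambda}{\alpha s}\,\widehat v(s,\xi)\int|\mathbf z|^{-1}|\widehat v(s,\sigma)|^2\,d\sigma
\]
cancels $i(\partial_s B_\alpha)\widehat v$ exactly thanks to the choice of constant in \eqref{balpha}. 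The remainder consists of (i) the truncation to $|\eta|\lesssim 2^n$, absorbed by the high-region bound, (ii) the Taylor remainder for the amplitude, controlled by $\|x v\|_{H^3}$, and (iii) the quadratic Taylor remainder for the phase, which produces a factor $s\cdot(\alpha-2)|\eta|^2\langle\xi\rangle^{\alpha-2}$ and demands integrability of $|\eta|^{-2+\alpha}$ on $\{|\eta|\lesssim 2^n\}$.

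The main obstacle is the last point: controlling the Taylor remainder of $\phi$ near the low-frequency singularity of $|\xi|^{\alpha-2}$. It is precisely this term, together with the matching loss of $\langle\xi\rangle^{-3}$ available from $\widehat v$ versus the $s^\theta$ size of the cutoff, that forces $\alpha > 5/3$, the upper bound $\delta_0 < (2-\alpha)/(3\alpha)$, and the specific choice $\theta = (3\alpha-5)/(40(\alpha+1))$ balancing the inner- and outer-region estimates. Getting the three error types to sum uniformly to $s^{-1-\delta_0}\epsilon_1^3$ in the $\langle\xi\rangle^5 L^\infty_\xi$-norm, and in particular gluing the estimates across the transition $|\xi|\sim s^\theta$ and $|\eta|\sim 2^n$ without loss, is the technically demanding step; everything else reduces to variants of the weighted Littlewood--Paley machinery already developed in Section 4.
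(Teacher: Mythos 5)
Your structural outline matches the paper's strategy: reduce to a pointwise-in-time estimate $\|\bxi^{5}(\partial_s\widehat v - i(\partial_s B_\alpha)\widehat v)\|_{L^\infty_\xi}\lesssim s^{-1-\delta_0}\epsilon_1^3$, split by the spatial cutoff $\varphi_s(\xi)$, decompose $\eta$ around a small threshold, linearize the phase and extract the cancellation with $\partial_s B_\alpha$ on the low-$\eta$ piece, and integrate by parts on the high-$\eta$ piece. You also correctly locate the sources of the constraints $\al>\frac53$ and $\delta_0<\frac{2-\al}{3\al}$. (The paper organizes the time integration into dyadic blocks $[2^m-1,2^{m+1}]$ and telescopes, but this is equivalent to your direct integration.)

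The concrete gap is the threshold choice $2^n\sim s^{-1+2\delta_0}$. This is far too small. The paper uses $2^{n_0}\sim s^{-\frac{2+2\delta_0}{\al+1}-\frac{10\theta}{3}}$, which for $\al$ near $\frac{17}{10}$ is roughly $s^{-0.74}$; this is precisely calibrated by the heuristic in the introduction that $\int_{|\eta|\lesssim 2^{n_0}}|\eta|^{-2+\al}d\eta\sim 2^{n_0(\al+1)}$ must be $O(s^{-2-})$. While your lower threshold makes the phase- and profile-approximation errors on the low-$\eta$ region only easier, it breaks the high-$\eta$ estimates. After one integration by parts the multiplier bound carries an unavoidable factor $2^{-2n_1}$ from the $|\eta|^{-2}$ singularity (see $A(\mathbf Q_\gamma^{\ell_1})$ in Lemma \ref{lm:ql}), and the worst case $n_1\approx n$ produces $2^{-2n}$. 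The remaining factors (two profile norms plus one $L^\infty$ factor of $u$ and $s^{-1}$ from the integration by parts) yield at best $s^{-5/2+O(\delta_0)}$, so the combined bound is $s^{-5/2+O(\delta_0)}\cdot 2^{-2n}$. With the paper's $2^{-2n_0}\sim s^{\frac{4}{\al+1}+\cdots}$ this is $\lesssim s^{-1-\delta_0}$ precisely when $\al>\frac53$; with your $2^{-2n}\sim s^{2-4\delta_0}$ the bound becomes $s^{-1/2+O(\delta_0)}$, which is nowhere near $s^{-1-\delta_0}$. So the high-$\eta$ piece fails. Additionally, your claim that the high-$\eta$ gain is simply ``$s^{-2}\cdot 2^{-2n}$'' elides that $|\nabla_\eta\phi|$ degenerates in the resonant case $|k_1-k_2|\le 10$ and that one must also decompose dyadically in $\sigma$ to control $\mathbf P=\nabla_\eta\phi/|\nabla_\eta\phi|^2$; the paper's casework in Section 5.3.2--5.3.3 (including the two-step integration by parts leading to $\widetilde J_1,\widetilde J_2,\widetilde J_3$) is what makes the argument actually close. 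You would need to redo the threshold bookkeeping and incorporate the $\sigma$-decomposition for the proposal to become a proof.
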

	
	\begin{rk}\label{global bound}
		The conditions of Theorem \ref{main thm} for $\al, N, \delta_0$ satisfy simultaneously those in Proposition \ref{scattering} and Proposition \ref{Linfty}. If we choose $\epsilon_0, \epsilon_1$ such that $\epsilon_1 = 3\epsilon_0$ and $C\epsilon_1^3 \le \epsilon_0$, then by combining Proposition \ref{Linfty} with Proposition \ref{scattering} and by continuity argument we deduce that $\|u\|_{\Sigma_T} \le 3\epsilon_0$ for all $T > 0$. In particular, we can choose $\bar{\epsilon}_0$ as in Theorem \ref{main thm} as $$\bar{\epsilon}_0 = \frac1{3\sqrt{3C}}.$$
	\end{rk}

	Let us set $\|u\|_{\Sigma_\infty} := \sup_{T> 0}\|u\|_{\Sigma_T}$. If $\epsilon_0 < \bar{\epsilon_0}$ as mentioned in Remark \ref{global bound}, then clearly
	$$
	\|u\|_{\Sigma_\infty} \le \epsilon_1.
	$$
	From this we deduce the following.
	\begin{cor}\label{asympt-prop}
		Let $\frac53 < \al < 2$. Assume that $u\in C([0, \infty); H^{N})$ satisfies a priori bound $ \| u \|_{\Sigma_{\infty}} \le \epsilon_{1}$ for $N, \delta_0 > 0$ as in Proposition \ref{Linfty}.
		Then there exist asymptotic state $v_{+}$,
		such that for all $t>0$
		\begin{align}\label{mscattering1}
		\lVert \bxi^{5}
		[
		w(t,\xi) - v_{+}(\xi)
		]
		\|_{L_{\xi}^{\infty}}
		\lesssim
		\ep_1^3
		\langle t \rangle^{-\delta_0}
		\end{align}
	\end{cor}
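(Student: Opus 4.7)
\medskip

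\noindent\textbf{Proof proposal for Corollary \ref{asympt-prop}.}
The plan is to deduce the existence of $v_+$ and the quantitative decay \eqref{mscattering1} as a direct consequence of the Cauchy-type difference estimate in Proposition \ref{diff-prop}, which has already been established under the same hypotheses. Because $\|u\|_{\Sigma_\infty}\le\epsilon_1$ controls $\|u\|_{\Sigma_T}$ uniformly in $T$, the bound
$$
\|\bxi^5(w(t_2,\xi)-w(t_1,\xi))\|_{L^\infty_\xi}\lesssim \epsilon_1^3\,\langle t_1\rangle^{-\delta_0}
$$
is valid for every pair $1\le t_1\le t_2<\infty$ with a constant independent of $t_2$.

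First, I would use this inequality to show that the family $\{\bxi^5 w(t,\cdot)\}_{t\ge 1}$ is Cauchy in the Banach space $C_b(\mathbb{R}^3)$ equipped with the $L^\infty_\xi$ norm: given any sequence $t_n\to\infty$, the bound $\epsilon_1^3\langle t_n\rangle^{-\delta_0}\to 0$ supplies the Cauchy condition. Therefore there exists a unique limit $v_+\in C_b(\mathbb{R}^3)$ (with $\bxi^5 v_+\in L^\infty$) such that
$$
\bxi^5\, w(t,\cdot)\;\longrightarrow\; \bxi^5\, v_+(\cdot)\quad\text{in }L^\infty_\xi\text{ as }t\to\infty.
$$
Passing to the limit $t_2\to\infty$ in Proposition \ref{diff-prop} with $t_1=t$ yields the claimed quantitative bound
$$
\|\bxi^5[w(t,\xi)-v_+(\xi)]\|_{L^\infty_\xi}\lesssim \epsilon_1^3\,\langle t\rangle^{-\delta_0}
$$
for all $t\ge 1$.

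Finally, to cover the remaining range $0<t\le 1$, I would note that on a bounded time interval $\langle t\rangle^{-\delta_0}$ is comparable to $1$, and the a priori bound $\|\bxi^5\widehat{v}\|_{L^\infty_\xi}\le \epsilon_1$ coming from $\|u\|_{\Sigma_\infty}\le\epsilon_1$, together with $\|\bxi^5 v_+\|_{L^\infty}\le \|\bxi^5 w(t,\cdot)\|_{L^\infty}+\epsilon_1^3$ for any fixed $t\ge 1$, yields the estimate on $(0,1]$ up to enlarging the implicit constant. Since essentially no new analytic input is required, there is no real obstacle; the whole content of the corollary is already encoded in Proposition \ref{diff-prop}, and the only thing to check carefully is that the Cauchy completeness argument is carried out in the right function space (namely, with the $\bxi^5$ weight, in which $w(t,\cdot)$ stays uniformly bounded by hypothesis).
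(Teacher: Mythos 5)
Your proof is correct and follows essentially the same route as the paper: extract $v_+$ as the $\bxi^{-5}L^\infty_\xi$ limit of $w(t,\cdot)$ via the Cauchy property supplied by Proposition~\ref{diff-prop}, then let $t_2\to\infty$ in \eqref{difference} to get the quantitative rate. The paper's proof is a one-liner that does exactly this (and, like you, works first on $t\ge 1$).

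The only place where your write-up falls short of the stated bound is the final paragraph on $0<t\le 1$. There you estimate $\|\bxi^5(w(t)-v_+)\|_{L^\infty}$ by the raw triangle inequality against the a priori bound $\|\bxi^5\widehat v\|_{L^\infty}\le\epsilon_1$, which only yields $\lesssim\epsilon_1$, not the claimed $\lesssim\epsilon_1^3$. To get $\epsilon_1^3$ on $(0,1]$ one should instead compare $w(t)$ with $w(1)$: writing
$$w(t)-w(1)=e^{-iB_\alpha(t,\xi)}\big(\widehat v(t)-\widehat v(1)\big)+\big(e^{-iB_\alpha(t,\xi)}-e^{-iB_\alpha(1,\xi)}\big)\widehat v(1),$$
the first term is $O(\bxi^{-5}\epsilon_1^3)$ by the Gronwall-type continuity estimate preceding \eqref{gronwal} in Section~2, and the second is $O(\bxi^{-5}\epsilon_1^3)$ since $|B_\alpha(t,\xi)-B_\alpha(1,\xi)|\lesssim\epsilon_1^2$ by \eqref{ba2}. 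Combined with $\|\bxi^5(w(1)-v_+)\|_{L^\infty}\lesssim\epsilon_1^3$ from the $t\ge1$ result, this closes the gap. The paper itself glosses over $(0,1]$ entirely, so your noticing it is to the good; only the particular inequality you used there is too lossy.
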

	\begin{proof}
		Letting $t_2\rightarrow\infty$ in \eqref{difference} implies the modified scattering  \eqref{mscattering1} once we define
		$$
		v_{+} := \lim_{t_2\rightarrow \infty}w(t_2,\xi).
		$$
		Here the limit has  been taken in $\bxi^{-5}L_\xi^{\infty}$.	
	\end{proof}

	\begin{proof}[Proof of Proposition \ref{diff-prop}]
		For $(\ref{difference})$, it suffices to show that
		if $t_{1}\le t_{2}\in[2^{m}-1,2^{m+1}]\cap [0,T]$
		for some positive integer $m$, then
		\begin{equation}\label{goal5.2}
		\|\bxi^{5}
		\big(w(t_{2},\xi) - w(t_{1},\xi)\big)
		\|_{L_{\xi}^{\infty}}
		\lesssim 2^{-\delta_0 m}\epsilon_{1}^{3}.
		\end{equation}	
		Indeed, Applying \eqref{goal5.2} we get for any $1 \le t_1 \le t_2 \in [0,T]$, \begin{align}
		& \|\bxi^{5}
		\big(w(t_2,\xi) - w(t,\xi)\big)
		\|_{L_{\xi}^{\infty}}\nonumber \le\\
		&\|\bxi^{5}
		\big(w(t,\xi) - w(2^{m},\xi)\big)
		\|_{L_{\xi}^{\infty}}\nonumber 
		+\sum_{j=0}^{m'-1}
		\|\bxi^{5}
		\big(w(2^{m+j},\xi) - w(2^{m+j+1},\xi)\big)
		\|_{L_{\xi}^{\infty}} \\
		&\qquad + \|\bxi^{5}
		\big(w(2^{m+m'},\xi) - w(t_2,\xi)\big)
		\|_{L_{\xi}^{\infty}} \nonumber\\
		&\lesssim
		\sum_{j=0}^{m'-1}
		2^{-\delta_0 (m+j)}\epsilon_{1}^{3}
		\lesssim
		t^{-\delta_0}\epsilon_{1}^{3}, \label{difference2}
		\end{align}
		where $t\in [2^{m}-1,2^{m+1}]$ and $t_2\in [2^{m+m'}-1,2^{m+m'+1}]$.
		%If $m=1$, i.e. $t_1, t_2 \in[0,2]$,
		%then from the estimates \eqref{ba}, \eqref{ba2} and \eqref{gronwal}
		%it follows that
		%\begin{align*}
		%&\|\bxi^{5}
		%   \big(w(t_{2},\xi) - w(t_{1},\xi)\big)
		%   \|_{L_{\xi}^{\infty}}\\
		%   &\lesssim \|\bxi^5\widehat u_0\|_{L^\infty}\left|e^{-iB_\al(t_2, \xi)} - e^{-iB_\al(t_1,\xi)}\right| + \|\bxi^5\widehat{v}(t_2)\|_{L^\infty} + \|\bxi^5\widehat{v}(t_1)\|_{L^\infty}\\
		%   &\lesssim \epsilon_1^3.
		%   \end{align*}
		%From now on we assume that $s \ge 1$ and hence $s \sim 2^m$.
		\color{black} Since $s \ge 1$, we assume that $s \sim 2^m$.
		Making change of variables, the nonlinear term $I$ as in $(\ref{Duhamel})$ can be written as
		$$
		I(s,\xi) =
		i c_1 \iint_{\mathbb{R}^{3}\times\mathbb{R}^{3}}
		e^{is\phi(\xi,\eta,\sigma)}
		\vert\eta\vert^{-2}
		\widehat{v}(s,\xi+\sigma)
		\overline{\widehat{v}(}s,\xi+\eta+\sigma)
		\widehat{v}(s,\xi+\eta)
		d\eta d\sigma,
		$$
		where
		\begin{align}\label{phase}
		\phi(\xi,\eta,\sigma) =
		\vert\xi\vert^{\alpha}
		-\vert\xi+\eta\vert^{\alpha}
		-\vert\xi+\sigma\vert^{\alpha}
		+\vert\xi+\eta+\sigma\vert^{\alpha}.
		\end{align}
		Let $n_0 = n_{0}(\alpha,m)\in\mathbb{Z}$ be the largest integer satisfying
		\begin{equation}\label{cond:j0}
		n_0 < \left(\frac{-2-2\delta_0}{\al+1} - \frac{10\theta}{3}\right)m.
		\end{equation}
		Then, it is clear that $2^{-n_{0}} \sim 2^{(\frac{2+2\delta_0}{\al+1}+\frac{10\theta}{3} )m}$.
		
		Let us now invoke the cut-off function $\beta_{n_1}^{(n_0)}$ in as \eqref{beta} replaced with $n = n_1$.
		Then the time derivative of $\widehat{v}$ can be decomposed as
		\begin{equation}\label{l1decompose}
		\partial_{s}\widehat{v}(s,\xi)
		= \partial_{s}\widehat{v}_{n_0}(s,\xi)
		+\sum_{n_1> n_0, n_1 \in \mathbb{Z}}\partial_{s}\widehat{v}_{n_1}(s,\xi),
		\end{equation}
		where
		\begin{align*}
		\partial_{s}\widehat{v}_{n_0}(s,\xi)
		&=ic_1\iint_{\mathbb{R}^{3}\times \mathbb{R}^{3}}
		e^{is\phi(\xi,\eta\,\sigma)}
		\vert\eta\vert^{-2}\beta_{n_0}^{(n_0)}(\eta)\\
		&\qquad\qquad\qquad\times\widehat{v}(s,\xi+\eta)\overline{\widehat{v}(}s, \xi+\eta+\sigma)\widehat{v}(s,\xi+\sigma)d\eta d\sigma, \\
		\partial_{s}\widehat{v}_{n_1}(s,\xi)  &=ic_1\iint_{\mathbb{R}^{3}\times \mathbb{R}^{3}}
		e^{is\phi(\xi,\eta\,\sigma)}
		\vert\eta\vert^{-2}
		\beta_{n_1}^{(n_0)}(\eta)\\
		&\qquad\qquad\qquad\times\widehat{v}(s,\xi+\eta)\overline{\widehat{v}(}s, \xi+\eta+\sigma)
		\widehat{v}(s,\xi+\sigma)
		d\eta d\sigma.
		\end{align*}
		By \eqref{l1decompose} we have
		\begin{align*}
		&\|\bxi^{5}(w(t_{1},\xi) - w(t_{2},\xi))
		\|_{L_{\xi}^{\infty}} = \big\|\bxi^{5}
		\int_{t_{1}}^{t_{2}}\partial_{s}w(s,\xi)ds
		\big\|_{L_{\xi}^{\infty}} \\
		&=
		\big\|\bxi^{5}
		\int_{t_{1}}^{t_{2}}
		e^{-iB_\al(s,\xi)}
		( \partial_{s}\widehat{v}(s,\xi)
		-i\partial_{s}B_\al(s,\xi)
		\widehat{v}(s,\xi)
		)
		ds
		\big\|_{L_{\xi}^{\infty}} \\
		&\lesssim
		\big\|\bxi^{5}
		\int_{t_{1}}^{t_{2}}
		e^{-iB_\al(s,\xi)}
		\Big(\partial_{s}\widehat{v}_{n_0}(s,\xi)-i\partial_{s}[B_\al(s,\xi)]\widehat{v}(s,\xi)\\
		&\qquad\qquad\qquad\qquad\qquad\qquad\qquad\qquad\quad\;\;    +\sum_{n_1 > n_0, n_1\in\mathbb{Z}}\partial_{s}\widehat{v}_{n_1}(s,\xi)
		\Big) ds
		\big\|_{L_{\xi}^{\infty}} \\
		&\lesssim
		\|\bxi^{5}
		\int_{t_{1}}^{t_{2}}
		e^{-iB_\al(s,\xi)}
		\Big(\partial_{s}\widehat{v}_{n_0}(s,\xi) - i\partial_{s}[B_\al(s,\xi)]\widehat{v}(s,\xi)\Big)\,ds
		\|_{L_{\xi}^{\infty}}\\
		&\qquad\qquad\qquad\qquad\quad\,    +\|\bxi^{5}
		\int_{t_{1}}^{t_{2}}
		\sum_{n_1>n_0}e^{-iB_\al(s,\xi)}
		\partial_{s}\widehat{v}_{n_1}(s,\xi)ds
		\|_{L_{\xi}^{\infty}}\\
		&\lesssim
		\big\|\bxi^{5}
		\int_{t_{1}}^{t_{2}}e^{-iB_\al(s,\xi)}
		\big(\partial_{s}\widehat{v}_{n_0}(s,\xi)\big(1-\varphi_{s}(\xi)\big)\big)
		ds
		\big\|_{L_{\xi}^{\infty}} \\
		&\quad
		+\big\|\bxi^{5}
		\int_{t_{1}}^{t_{2}}e^{-iB_\al(s,\xi)}
		\big(\partial_{s}\widehat{v}_{n_0}(s,\xi)\varphi_{s}(\xi) - i\partial_{s}[B_\al(s,\xi)]\widehat{v}(s,\xi)\big)ds
		\big\|_{L_{\xi}^{\infty}} \\
		&\qquad
		+\big\|\bxi^{5}
		\int_{t_{1}}^{t_{2}}\sum_{n_1>n_0}
		e^{-iB_\al(s,\xi)}
		\partial_{s}\widehat{v}_{n_1}(s,\xi)ds
		\big\|_{L_{\xi}^{\infty}}.
		\end{align*}
		So, it suffices to show that the following three estimates hold.
		\begin{equation}\label{ineq:1}
		\vert \partial_{s}\widehat{v}_{n_0}(s,\xi)(1-\varphi_{s}(\xi))\vert \lesssim
		2^{(-1-\delta_0)m}
		\langle\xi\rangle^{-5}\epsilon_{1}^{3},
		\end{equation}
		\begin{equation}\label{ineq:2}
		\vert
		\partial_{s}\widehat{v}_{n_0}(s,\xi)\varphi_{s}(\xi)
		- i\partial_{s}B_\alpha(s,\xi)\widehat{v}(s,\xi) \vert
		\lesssim
		2^{(-1-\delta_0)m}
		\langle\xi\rangle^{-5}\epsilon_{1}^{3},
		\end{equation}
		\begin{equation}\label{ineq:3}
		\sum_{n_1>n_0}
		\vert \partial_{s}\widehat{v}_{n_1}(s,\xi) \vert
		\lesssim
		2^{(-1-\delta_0)m}\langle\xi\rangle^{-5}\epsilon_{1}^{3}.
		\end{equation}
		
	\end{proof}

	\subsection{Proof of \eqref{ineq:1}(High Frequency part) }
	\color{black}
	Decomposing of \eqref{ineq:1} as in $\eqref{l1decompose}$, we get
	$$
	\vert \partial_{s}\widehat{v}_{n_0}(s,\xi)(1-\varphi_{s}(\xi))\vert \lesssim
	\sum_{n_1<n_0+10}
	\vert \partial_{s}\widehat{v}_{n_1}(s,\xi) \vert.
	$$
	In the case of high frequency $\vert\xi\vert\gg\vert\eta\vert$,
	i.e. $\vert\xi\vert\sim\vert\xi+\eta\vert$, we estimate
	\begin{equation}\label{ineq:high}
	\begin{split}
	\vert \partial_{s}\widehat{v}_{n_1}(s,\xi) \vert
	&\lesssim
	2^{-2n_1}\langle\xi\rangle^{-N}
	\| \beta_{n_1}\|_{L^{2}}
	\| (1+\vert\xi+\cdot\vert)^{N}\widehat{v}(\xi+\cdot)
	\widehat{v}*\widehat{v}
	\|_{L^{2}} \\
	&\lesssim
	2^{-\frac{n_1}{2}}\langle\xi\rangle^{-N}
	\| u \|_{H^{N}}
	\| \widehat{v} \|_{L^{2}}^{2}\\
	&\lesssim
	2^{-\frac{n_1}{2}}\langle\xi\rangle^{-N}
	\bs^{\delta_0}\epsilon_{1}^{3}.
	\end{split}
	\end{equation}
	And we also have
	\begin{equation}\label{ineq:high2}
	\begin{split}
	\vert\partial_{s}\widehat{v}_{n_1}(s,\xi) \vert
	&\lesssim
	2^{-2n_1}\langle\xi\rangle^{-5}
	\|\beta\|_{L^{1}}
	\| (1+\vert\xi+\cdot\vert)^{5}\widehat{v}(\xi+\cdot)
	\widehat{v}*\widehat{v}
	\|_{L^{\infty}} \\
	&\lesssim
	2^{n_1}\langle\xi\rangle^{-5}
	\|(1+\vert\cdot\vert)^{5}
	\widehat{v}\|_{L^{\infty}}
	\|\widehat{v}\|_{L^{2}}^{2}\\
	&\lesssim
	2^{n_1}\langle\xi\rangle^{-5}
	\epsilon_{1}^{3}.
	\end{split}
	\end{equation}
	Then by dividing the sum w.r.t. $n_1$ into two parts and using
	$\langle\xi\rangle \gtrsim \bs^\theta \sim 2^{\theta m}$ we get
	\begin{align*}
	&\sum_{n_1<n_0+10}
	\vert \partial_{s}\widehat{v}_{n_1}(s,\xi) \vert\\
	&\lesssim
	\sum_{n_1\le(-1-\delta_0)m}
	2^{n_1}\langle\xi\rangle^{-5}
	\epsilon_{1}^{3}
	+\sum_{(-1-\delta_0)m\le n_1\le n_0+10}
	2^{-\frac{n_1}{2}}\langle\xi\rangle^{-N}
	\bs^{\delta_0}\epsilon_{1}^{3} \\
	&\lesssim
	\langle\xi\rangle^{-5}
	\epsilon_{1}^{3}
	\Big(
	2^{(-1-\delta_0)m}
	+ 2^{-\theta(N-5)m + \delta_0 m}
	\sum_{(-1-\delta_0)m\le n_1\le n_0+10}
	2^{-\frac{n_1}{2}}
	\Big)\\
	&\lesssim 2^{(-1-\delta_0)m}
	\langle\xi\rangle^{-5}
	\epsilon_{1}^{3},
	\end{align*}
	since $\delta_0 \le \frac{3\al-5}{100(\al+1)}(N-5) - \frac35$.  	
	This proves \eqref{ineq:1}.

	\subsection{Proof of $(\ref{ineq:2})$}
	
	\subsubsection{Phase approximation}
	
	$\newline$
	Define
	\begin{align*}
	\widehat{v}_{{0,1}}(s,\xi)
	= ic_1\iint_{\mathbb{R}^{3}\times\mathbb{R}^{3}}
	&e^{is\widetilde{\phi}(\xi,\eta,\sigma)}
	\vert\eta\vert^{-2}
	\beta_{n_0}^{(n_0)}(\eta)\\
	&\qquad \times \widehat{v}(s,\xi+\eta)\overline{\widehat{v}(}s, \xi+\eta+\sigma)\widehat{v}(s,\xi+\sigma)
	d\eta d\sigma,
	\end{align*}
	where
	\begin{align}\label{tildephase}
	\widetilde{\phi}(\xi,\eta,\sigma)
	=\alpha\left(
	\frac{\xi\cdot\eta}{\vert\xi\vert^{2-\alpha}}
	-\frac{(\xi+\sigma)\cdot\eta}
	{\vert\xi+\sigma\vert^{2-\alpha}}
	\right).
	\end{align}
	By Lemma \ref{diff-phase} we have
	\begin{align*}
	\vert \phi(\xi,\eta,\sigma)-\tilde\phi(\xi,\eta,\sigma)\vert
	\lesssim \vert\eta\vert^{\alpha}.
	\end{align*}
	%This bound is optimal, which restrict the range of $\alpha$ to $\alpha>1+\frac{2}{3}.$
	Now, we have
	\begin{align*}
	&\vert \partial_{s}\widehat{v}_{n_0}(s,\xi)-\widehat{v}_{0,1}(s,\xi) \vert\\
	&\lesssim
	s \iint_{\mathbb{R}^{3}\times\mathbb{R}^{3}}
	\vert\eta\vert^{\alpha}
	\vert\eta\vert^{-2}
	\beta_{n_0}^{(n_0)}(\eta)
	\left|\widehat{v}(s,\xi+\eta)\overline{\widehat{v}(}s, \xi+\eta+\sigma)\widehat{v}(s,\xi+\sigma)
	\right|
	d\eta d\sigma\\
	&\lesssim
	s\langle\xi\rangle^{-5} \iint_{\mathbb{R}^{3}\times\mathbb{R}^{3}}
	\vert\eta\vert^{\alpha-2}
	\beta_{n_0}^{(n_0)}(\eta)
	\langle
	\max(\vert\xi+\eta+\sigma\vert,
	\vert\xi+\eta\vert,
	\vert\xi+\sigma\vert)
	\rangle^{5} \\
	&\qquad\quad\times
	\vert\widehat{v}(s,\xi+\eta)\overline{\widehat{v}(}s, \xi+\eta+\sigma)\widehat{v}(s,\xi+\sigma)
	\vert
	d\eta d\sigma \\
	&\lesssim
	s\langle\xi\rangle^{-5}
	\| \bxi^{5}\widehat{v}
	\|_{L^{\infty}}
	\|\vert\cdot\vert^{\alpha-2}
	\beta_{n_0}^{(n_0)}\|_{L^{1}}
	\|\widehat{v}\|_{L^{2}}
	\|\widehat{v}\|_{L^{2}} \\
	&\lesssim
	s \langle \xi \rangle^{-5}
	2^{n_0(\alpha+1)}
	\epsilon_{1}^{3} \\
	&\lesssim 2^m2^{-(2+2\delta_0 + \frac1{12}(3\al-5))m}\langle \xi \rangle^{-5} \epsilon_{1}^{3} \\
	&\lesssim
	2^{(-1-\delta_0)m} \langle \xi \rangle^{-5}
	\epsilon_{1}^{3} \;\;\mbox{for any}\;\;\delta_0 > 0.
	\end{align*}
	%To compute third inequality, we divide the region of integral into three cases.
	
	\subsubsection{Profile approximation}
	We further approximate
	$\widehat{v}_{0,1}(s,\xi)$ by
	$$
	\widehat{v}_{0,2}(s,\xi)
	= ic_1
	\iint_{ \mathbb{R}^{3}\times\mathbb{R}^{3}}
	e^{is\widetilde{\phi}(\xi,\eta,\sigma)}
	\vert\eta\vert^{-2}
	\beta_{n_0}^{(n_0)}(\eta)
	\widehat{v}(s,\xi)\overline{\widehat{v}(}s, \xi+\sigma)\widehat{v}(s,\xi+\sigma)
	d\eta d\sigma.
	$$
	In order to do this, we define with the notation in \eqref{beta}
	$$
	v_{\le J}(x):= \beta_{J}^{(J)}(x)v(x)
	\ \textrm{and} \
	v_{> J}(x) := v(x)-v_{\le J}(x)\;\;\mbox{for}\;\;J \ge 0.
	$$
	
	We see that for $ \vert\eta\vert \lesssim 2^{n_0},$
	\begin{align*}
	\vert \widehat{v}(\rho+\eta)-\widehat{v}(\rho)\vert
	&\lesssim
	\vert \widehat{v_{> J}}(\rho+\eta)-\widehat{v_{> J}}(\rho)\vert
	+\vert \widehat{v_{\le J}}(\rho+\eta)-\widehat{v_{\le J}}(\rho) \vert \\
	&\lesssim
	2\| \widehat{v_{> J}}\|_{L^{\infty}}
	+\|\nabla\widehat{v_{\le J}}\|_{L^{\infty}}
	\cdot2^{n_0} \\
	&\lesssim
	\| (|x|^{-2})_{> J}\|_{L^{2}}
	\| x^{2}v_{> J}\|_{L^{2}}
	+\| (|x|^{-1})_{\le J}\|_{L^{2}}
	\| x^{2}v_{\le J}\|_{L^{2}}2^{n_0} \\
	&\lesssim
	(2^{-\frac{J}{2}}+2^{\frac{J}{2}}2^{n_0})
	\bs^{2\delta_0}\epsilon_{1}.
	\end{align*}
	Choosing $J = -n_0$ we obtain
	\[
	\vert \widehat{v}(\rho+\eta)-\widehat{v}(\rho)\vert
	\lesssim
	2^{\frac{n_0}{2}}\bs^{2\delta_0}\epsilon_{1}.
	\]
	For the low frequency part, i.e. $\vert\xi\vert\lesssim s^\theta$
	we see that
	\begin{align*}
	& \vert \widehat{v}_{0,1}(s,\xi)-\widehat{v}_{0,2}(s,\xi) \vert\\
	&\lesssim
	2^{\frac{n_0}{2}}\bs^{2\delta_0}\epsilon_{1}
	\Big(
	\int_{\mathbb{R}^{3}}
	\int_{\mathbb{R}^{3}}
	\vert \widehat{v}(s,\sigma)\vert
	\vert \widehat{v}(s,\eta+\sigma) \vert
	d\sigma
	\vert\eta\vert^{-2}\beta_{n_0}^{(n_0)}(\eta)
	d\eta \\
	&\qquad\qquad\qquad\qquad
	+\int_{\mathbb{R}^{3}}
	\int_{\mathbb{R}^{3}}
	\vert \widehat{v}(s,\xi+\sigma)\vert
	\vert \widehat{v}(s,\xi) \vert
	d\sigma
	\vert\eta\vert^{-2}
	\beta_{n_{0}}^{(n_0)}(\eta) d\eta
	\Big) \\
	&\lesssim
	2^{\frac{n_0}{2}}\bs^{2\delta_0}\epsilon_{1}
	\| \vert\cdot\vert^{-2}
	\beta_{n_0}^{(n_0)}\|_{L^{1}}
	\left(
	\|\widehat{v}*\widehat{v}\|_{L^{\infty}}
	+\langle 2^{k}\rangle^{-5}
	\|\langle\cdot\rangle^{5}
	\widehat{v}\|_{L^{\infty}}
	\|\widehat{v}\|_{L^{1}}
	\right) \\
	&\lesssim
	2^{\frac{3n_0}{2}}2^{2\delta_0m}\epsilon_{1}^{3} \\
	&\lesssim
	2^{(-\frac{3+3\delta_0}{\alpha+1}
		- 5\theta + 2\delta_0)m} 2^{5\theta m}
	\langle \xi\rangle^{-5}\epsilon_{1}^{3}   \\
	&\lesssim
	2^{(-1-\delta_0)m}
	\langle \xi\rangle^{-5}\epsilon_{1}^{3},
	\end{align*}
	provided $\delta_0 \le \frac{2-\al}{3\al}$.
	
	\subsubsection{Final approximation}
	We need to show that
	\begin{align}\begin{aligned}\label{ineq:final}
	&\left\vert
	\widehat{v}_{0,2}(s,\xi)
	-i\frac{c_0}{4\pi}\int_{\mathbb{R}^{3}}
	| \mathbf z|^{-1}
	\vert\widehat{v}(s,\sigma)\vert^{2}d\sigma
	\frac{1}{\bs}
	\widehat{v}(s,\xi)
	\right\vert\\
	&\qquad\qquad\qquad\qquad\qquad\lesssim 2^{(-1-\delta_0)m}
	\langle \xi\rangle^{-5}\epsilon_{1}^{3},
	\end{aligned}\end{align}
	where
	$$
	\mathbf z = \frac{\xi}{\vert\xi\vert^{2-\alpha}} - \frac{\sigma}{\vert\sigma\vert^{2-\alpha}}.
	$$
	Observe that since the Fourier transform of $\vert\eta\vert^{-2}$
	is $\frac{2\pi^2}{\vert x\vert},$
	\[
	\Big\vert
	\int e^{i\eta\cdot x}\vert\eta\vert^{-2}
	\beta_{n_0}^{(n_0)}(\eta)d\eta
	-\frac{2\pi^2}{\vert x\vert }
	\Big\vert
	\lesssim
	\vert x\vert^{-2}2^{-n_0}.
	\]
	After change of variable and applying above inequality, \eqref{ineq:final} can be reduced to
	\begin{align}\label{ineq:fianl2}
	2^{-n_0}
	\int
	\vert s\mathbf z\vert^{-2}
	\vert\widehat{v}(s,\sigma)\vert^{2}
	d\sigma
	\lesssim 2^{(-1-\delta_0)m}\ep_1^2.
	\end{align}
	% By making change of variables ($\sigma \mapsto -\xi+\sigma$),
	%$$
	%   \widehat{v}_{0,2}(s,\xi)   = ic_1    \iint_{\mathbb{R}^{3}\times\mathbb{R}^{3}}   e^{i \alpha s (\eta \cdot \mathbf z)} \vert\eta\vert^{-2} \beta_{n_0}^{(n_0)}(\eta)   \widehat{v}(s,\sigma)    \overline{\widehat{v}(}s,\sigma)d\eta   d\sigma   \widehat{v}(s,\xi).
	%$$
	Since
	$ \vert \mathbf z\vert
	\gtrsim
	\min(\vert\sigma\vert^{\alpha-1},
	\frac{\vert\xi-\sigma\vert}{\vert\sigma\vert^{2-\alpha}})
	$ from Lemma \ref{lowbound}, we see that
	$$
	\int_{\mathbb{R}^{3}}
	\vert \mathbf z\vert^{-2}
	\vert\widehat{v}(s,\sigma)\vert^{2}
	d\sigma
	\lesssim
	\| \langle\cdot\rangle^{5}
	\widehat{v}\|_{L^{\infty}}^{2}.
	$$
	Plugging this into \eqref{ineq:fianl2},
	we obtain \eqref{ineq:final} provided $\al > \frac53$.

	\subsection{Proof of $(\ref{ineq:3})$}
	We aim to prove that
	\begin{equation}\label{goal}
	\sum_{n_1>n_0}
	\vert \partial_{s}\widehat{v}_{n_1}(s,\xi) \vert
	\lesssim
	\epsilon_{1}^{3}2^{(-1-\delta_0)m}
	\langle \xi \rangle^{-5}.
	\end{equation}
	Decomposing all the profiles dyadically, we have
	\begin{equation}\label{dyadic}
	\partial_{s}\widehat{v}_{n_1}(s,\xi)
	=\sum_{k_{1},k_{2},k_{3}\in\mathbb{Z}}
	I_{n_1}^{k_{1},k_{2},k_{3}}(s,\xi),
	\end{equation}
	where
	\begin{align*}
	&I_{n_1}^{k_{1},k_{2},k_{3}}(s,\xi)\\
	&=ic_1\iint_{\mathbb{R}^{3}\times \mathbb{R}^{3}}
	e^{is\phi(\xi,\eta,\sigma)}
	\vert\eta\vert^{-2}
	\beta_{n_1}^{(n_0)}(\eta)\beta_{k_1}(\xi+\eta)\beta_{k_2}(\sigma+\xi+\eta)\beta_{k_3}(\xi+\sigma)\\
	&\qquad\qquad\qquad\qquad\times     \widehat{v_{k_{1}}}(s,\xi+\eta)
	\overline{\widehat{v_{k_{2}}}(}s,\sigma+\xi+\eta)
	\widehat{v_{k_{3}}}(s,\xi+\sigma)
	d\eta d\sigma.
	\end{align*}
	By $v_{k_j}$ we denote $\widetilde P_{k_j}(v)$ for simplicity.
	Now Young's convolution inequality yields
	$$
	\vert I_{n_1}^{k_{1},k_{2},k_{3}}(s,\xi) \vert
	\lesssim
	2^{-\frac{n_1}{2}}
	\| \widehat{v_{k_{1}}} \|_{L^{2}}
	\| \widehat{v_{k_{2}}} \|_{L^{2}}
	\| \widehat{v_{k_{3}}} \|_{L^{2}}.
	$$
	Since
	$ \| v_{k}\|_{L^{2}}
	\lesssim
	\min\big( \langle 2^{k}\rangle^{-5}2^{\frac{3k}{2}}\epsilon_{1}, \langle 2^{k}\rangle^{-N}s^{\delta_0}\epsilon_{1}\big),
	$
	we observe that
	\begin{align}\label{sum:k}
	\sum_{k\in \mathbb{Z}}\| v_{k}(s)\|_{L^{2}}
	\lesssim \min(1,\langle 2^{k}\rangle^{-5}{s}^{\delta_0})\epsilon_{1}.
	\end{align}
	First, we consider the sum in $(\ref{dyadic})$ over $k_{1},k_{2},k_{3}$ with
	$\min (k_{1},k_{2},k_{3}) \le -\frac{2}{3}(\frac{\alpha+2}{\alpha+1}(1+2\delta_0)+\frac{5}{3}\theta)m.$
	We estimate the sum over $ k_{1}\le k_{2}\le k_{3} $ using \eqref{sum:k}, then the others can be treated similarly.
	Since $\max (k_{1},k_{2},k_{3})\sim k$, we estimate
	\begin{align*}
	& \sum_{\substack{n_1>n_0 \\
			k_1\le -\frac{2}{3}(\frac{\alpha+2}{\alpha+1}(1+2\delta_0)+\frac{5}{3}\theta)m }}
	\vert I_{n_1}^{k_{1},k_{2},k_{3}}(s,\xi)\vert \\
	&\lesssim
	\sum_{\substack{n_1>n_0 \\
			k_1\le -\frac{2}{3}(\frac{\alpha+2}{\alpha+1}(1+2\delta_0)+\frac{5}{3}\theta)m }}
	2^{-2l_1}\| \beta_{n_1}^{(n_0)}\|_{L^{2}}
	\|\widehat{v_{k_{1}}} \|_{L^{2}}
	\sum_{k_{2}\ge k_{1}}
	\| \widehat{v_{k_{2}}} \|_{L^{2}}
	\sum_{k_{3}\ge k_{2}}
	\| \widehat{v_{k_{3}}} \|_{L^{2}} \\
	&\lesssim
	2^{-\frac{n_0}{2}}
	\sum_{k_{1}\le -\frac{2}{3}(\frac{\alpha+2}{\alpha+1}(1+2\delta_0)+\frac{5}{3}\theta)m}
	\|\widehat{v_{k_{1}}} \|_{L^{2}}
	\sum_{k_{2}\ge k_{1}}
	\| \widehat{v_{k_{2}}} \|_{L^{2}}
	\langle 2^{k}\rangle^{-5}s^{\delta_0}\epsilon_{1} \\
	&\lesssim
	2^{(\frac{1+\delta_0}{\alpha+1}+\frac{5}{3}\theta + \delta_0)m}
	\langle 2^{k}\rangle^{-5}\epsilon_{1}^{3}
	\sum_{k_{1}\le-\frac{2}{3}(\frac{\alpha+2}{\alpha+1}(1+2\delta_0)+\frac{5}{3}\theta)m}
	2^{\frac{3k_{1}}{2}} \\
	&\lesssim
	\epsilon_{1}^{3}\langle 2^{k}\rangle^{-5}\
	2^{(-1-\delta_0)m}.
	\end{align*}
	Now we estimate the sum over $\max(k_{1},k_{2},k_{3})\ge\frac{2m}{N-5}$.
	We also treat only the case where $\max( k_{1},k_{2},k_{3} ) = k_{1}\sim k$.
	\begin{align*}
	\sum_{\substack{ n_1>n_0 \\
			k_{1}\ge \frac{2m}{N-5}}}
	\vert I_{n_1}^{k_{1},k_{2},k_{3}}(s,\xi)\vert
	&\lesssim
	\sum_{\substack{ n_1>n_0 \\
			k_{1}>\frac{2m}{N-5}}}
	2^{-\frac{n_1}{2}}
	\| \widehat{v_{k_{1}}} \|_{L^{2}}
	\sum_{k_{2},k_{3}\le k_{1}}
	\| \widehat{v_{k_{2}}} \|_{L^{2}}
	\| \widehat{v_{k_{3}}} \|_{L^{2}}     \\
	&\lesssim
	2^{-\frac{n_0}{2}}
	\sum_{k_{1}>\frac{2m}{N-5}}
	2^{(-N+5) k_{1}}
	s^{\delta_0}	
	\langle \xi\rangle^{-5}\epsilon_{1}^{3} \\
	&\lesssim
	\epsilon_{1}^{3}\langle\xi\rangle^{-5}
	2^{(-1-\delta_0)m}.
	\end{align*}
	
	Let us consider the remaining case:
	\begin{equation}\label{rangeki}
	n_1>n_0, \
	-\frac{2}{3}
	\big(\frac{\alpha+2}{\alpha+1}(1+2\delta_0)+\frac5{3}\theta\big)m
	\le
	k_{1},k_{2},k_{3}
	\le
	\frac{2m}{N-5}.
	\end{equation}
	Since
	$  n_1
	\le
	\max (k_{2},k_{3})+10
	\le \frac{2m}{N-5}+10,
	$
	the remaining indexes in the sum are $O(m^{4}).$
	Thus, to prove $(\ref{goal})$ it suffices to show that
	$$
	\vert I_{n_1}^{k_{1},k_{2},k_{3}}(s,\xi)\vert
	\lesssim
	\epsilon_{1}^{3}
	2^{(-1-2\delta_0)m}
	\langle \xi\rangle^{-5}
	$$
	for each  $n_1, k_{1},k_{2},k_{3}$ satisfying $\eqref{rangeki}$.
	
	Let us further decompose
	\begin{equation}\label{dyadic2}
	I_{n_1}^{k_{1},k_{2},k_{3}}
	=\sum_{n_2\in\mathbb{Z}}
	I_{n_1,n_2}^{k_{1},k_{2},k_{3}}(s,\xi),
	\end{equation}
	where
	\begin{align*}
	&I_{n_1,n_2}^{k_{1},k_{2},k_{3}}(s,\xi)\\
	&= ic_1\iint_{\mathbb{R}^{3}\times\mathbb{R}^{3}}
	e^{is\phi(\xi,\eta,\sigma)}
	|\eta|^{-2}
	\beta_{n_1}^{(n_0)}(\eta)
	\beta_{n_2}(\sigma) \beta_{k_1}(\xi+\eta)\beta_{k_2}(\xi+\eta+ \sigma)\\
	&\qquad\qquad\qquad\times
	\widehat{v_{k_{1}}}(s,\xi+\eta)
	\overline{\widehat{v_{k_{2}}}}(s,\xi+\eta+\sigma)
	\widehat{v_{k_3}}(s,\xi+\sigma)
	d\eta d\sigma.
	\end{align*}
	The above terms are zero if
	$n_2\ge \frac{2m}{N-5}+10.$
	Moreover, we can estimate
	\begin{align*}
	| I_{n_1,n_2}^{k_{1},k_{2},k_{3}}(s,\xi) |
	&\lesssim
	2^{-2n_1}\langle\xi\rangle^{-5}
	\| \bxi^{5}\widehat{v}
	\|_{L^{\infty}}^{3}
	\| \beta_{n_1}^{(n_0)}\|_{L^{1}}
	\| \beta_{n_2} \|_{L^{1}} \\
	&\lesssim
	\epsilon_{1}^{3}
	\langle\xi\rangle^{-5}
	2^{n_1}2^{3n_2}.
	\end{align*}
	This shows that
	\begin{equation}
	\sum_{\{n_2 : 3n_2+n_1\le (-1-2\delta_0)m\}}
	\vert I_{n_1,n_2}^{k_{1},k_{2},k_{3}}(s,\xi)\vert
	\lesssim
	\epsilon_{1}^{3}
	\langle\xi\rangle^{-5}
	2^{(-1-2\delta_0)m}.
	\end{equation}
	We are then left again with a summation over $n_2$ with only $O(m)$ terms.
	Therefore, $(\ref{goal})$ will be a consequence of the following.
	
	\begin{prop}
		Let $I_{n_1,n_2}^{k_{1},k_{2},k_{3}}$ be defined by
		\eqref{dyadic2}, and assume that	
		$\| u\|_{\Sigma_{T}}$$\le\epsilon_{1}.$
		Then one has
		$$
		\vert I_{n_1,n_2}^{k_{1},k_{2},k_{3}}(s,\xi)\vert
		\lesssim
		\epsilon_{1}^{3}
		\langle\xi\rangle^{-5}
		2^{(-1-3\delta_0)m},
		$$
		whenever
		\begin{align}\begin{aligned}{\label{restriction}}
		&-\frac{2}{3}
		\big(\frac{\alpha+2}{\alpha+1}(1+2\delta_0)
		+\frac5{3}\theta\big)m
		\le k_{1},k_{2},k_{3}
		\le \frac{2m}{N-5}, \\
		&\qquad\qquad n_1 > n_0 \
		\textrm{and} \
		n_1 + 3n_2\ge(-1-2\delta_0)m.
		\end{aligned}
		\end{align}	
	\end{prop}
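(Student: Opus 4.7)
The plan is to extract the decay via integration by parts in $\sigma$ (or in $\eta$, whichever is more favorable), exploiting that $\phi$ has no stationary point on the frequency-localized support prescribed by \eqref{restriction}. The crude bound coming from Young's inequality combined with $\|\bxi^5\widehat v\|_{L^\infty}\lesssim\epsilon_1$ gives
\begin{align*}
|I_{n_1,n_2}^{k_1,k_2,k_3}(s,\xi)|\lesssim\epsilon_1^3\langle\xi\rangle^{-5}\,2^{n_1+3n_2},
\end{align*}
which is insufficient since $n_1+3n_2$ may be as large as $O(m/(N-5))$ under the restriction; hence genuine non-stationary-phase cancellation is required.

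First I would compute
\begin{align*}
\nabla_\sigma\phi &= \alpha\Bigl(\tfrac{\xi+\eta+\sigma}{|\xi+\eta+\sigma|^{2-\alpha}}-\tfrac{\xi+\sigma}{|\xi+\sigma|^{2-\alpha}}\Bigr),\\
\nabla_\eta\phi &= \alpha\Bigl(\tfrac{\xi+\eta+\sigma}{|\xi+\eta+\sigma|^{2-\alpha}}-\tfrac{\xi+\eta}{|\xi+\eta|^{2-\alpha}}\Bigr),
\end{align*}
and, setting $k_\ast=\max(k_1,k_2,k_3)$, apply a Lemma~\ref{lowbound}-type bound for the vector field $z\mapsto z/|z|^{2-\alpha}$ to obtain
\begin{align*}
|\nabla_\sigma\phi|\gtrsim 2^{n_1}\,2^{(\alpha-2)k_\ast},\qquad |\nabla_\eta\phi|\gtrsim 2^{n_2}\,2^{(\alpha-2)k_\ast}
\end{align*}
on the support of the integrand. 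Using $e^{is\phi}=(is|\nabla_\ast\phi|^2)^{-1}\nabla_\ast\phi\cdot\nabla_\ast e^{is\phi}$, I would then integrate by parts $\ell$ times in the more favorable of $\sigma$ and $\eta$. Each iteration contributes a factor $(s|\nabla_\ast\phi|)^{-1}$ together with derivatives distributed among the cutoffs $\beta_{n_1}^{(n_0)},\beta_{n_2}$ (cost $2^{-n_j}$ per derivative), the symbol $\nabla_\ast\phi/|\nabla_\ast\phi|^2$ (cost $2^{-k_\ast}$ per derivative, via the structure of $|z|^{\alpha-2}$), and the profiles $\widehat{v_{k_j}}$ (absorbed by Plancherel using $\|xv\|_{H^3}\lesssim\bs^{\delta_0}\epsilon_1$ and $\|x^2v\|_{H^2}\lesssim\bs^{2\delta_0}\epsilon_1$ from $\|u\|_{\Sigma_T}\le\epsilon_1$). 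Assembling via Young's inequality as in the earlier sections then produces a bound of the form $\epsilon_1^3\langle\xi\rangle^{-5}\cdot 2^{n_1+3n_2}\cdot G^\ell$ with per-step gain $G\lesssim 2^{-cm}$ for some $c(\alpha,\delta_0)>0$ guaranteed by \eqref{restriction}; a fixed number $\ell$ of iterations then beats $2^{(1+3\delta_0)m}$.

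The principal obstacle is the symbol calculus and the case bookkeeping. The multiplier $\nabla_\ast\phi/|\nabla_\ast\phi|^2$ contains the non-smooth function $z\mapsto|z|^{\alpha-2}$, whose higher derivatives become more singular as $\alpha\downarrow 1$---the same mechanism that forced the restriction $\alpha>\tfrac{17}{10}$ in Section 4. One must verify that within the frequency-localized supports the singularity is harmless, and that the direction of integration by parts ($\sigma$ vs.\ $\eta$) can always be chosen so the gain from $(s|\nabla_\ast\phi|)^{-1}$ dominates the $\bs^{\delta_0}$ and $\bs^{2\delta_0}$ losses arising from derivatives on the profiles. The lower bound $n_1+3n_2\ge(-1-2\delta_0)m$ in \eqref{restriction} is precisely the condition that prevents the crude factor $2^{n_1+3n_2}$ from overwhelming the gain and so enables a fixed, finite $\ell$ to suffice.
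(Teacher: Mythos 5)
Your overall strategy---integration by parts via the phase identity $e^{is\phi}=\frac{1}{is}\,\frac{\nabla_\ast\phi}{|\nabla_\ast\phi|^{2}}\cdot\nabla_\ast e^{is\phi}$, combined with multiplier estimates (Lemma~\ref{LemmaB}) and the weighted norms encoded in $\Sigma_T$---is essentially what the paper does, but as stated the proposal has two concrete gaps that keep it from being a proof.

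First, the number of integrations by parts is capped at \emph{two}, not an arbitrary fixed $\ell$: the norms in $\Sigma_T$ control at most $\|x^2v\|_{H^2}$, so Lemma~\ref{derivative} gives bounds for $\|\nabla\widehat{v_k}\|_{L^2}$ and $\|\nabla^2\widehat{v_k}\|_{L^2}$ but nothing for $\nabla^3\widehat{v_k}$. You therefore cannot write ``a fixed number $\ell$ of iterations then beats $2^{(1+3\delta_0)m}$'' and move on; you must show that $\ell\le 2$ already suffices in every regime, and verifying that is the whole argument, not a remaining detail.

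Second, the case distinctions you defer to ``bookkeeping'' are structurally load-bearing. In the regime $\max(k_1,k_2)\le n_1$ the paper performs \emph{no} integration by parts: under \eqref{restriction} one has $\langle\xi\rangle^5\lesssim 2^{10m/(N-5)}$, and the crude multiplier bound together with the time decay $\|\widetilde P_{k_3}(u)\|_{L^\infty}\lesssim s^{-3/2}\epsilon_1$ closes directly, so your claim that genuine non-stationary-phase cancellation is always needed is false in this case. In the regime $\max(k_1,k_2)\ge n_1$ with $|k_1-k_2|\ge 10$, the correct phase lower bound is $|\nabla_\eta\phi|\gtrsim 2^{\max(k_1,k_2)(\alpha-1)}$, coming from the mismatch in magnitude of $|\xi+\eta|^{\alpha-1}$ and $|\xi+\eta+\sigma|^{\alpha-1}$, and this is strictly stronger than the Lemma~\ref{lowbound}-type bound $2^{n_2}2^{(\alpha-2)k_\ast}$ you propose; the latter is the right form only when $|k_1-k_2|\le 10$ (so that $k_\ast\sim k_1\sim k_2$). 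The tight inequality $-\tfrac52+2\delta_0+\tfrac{4+4\delta_0}{\alpha+1}+\cdots\le -1-3\delta_0$ that pins down $\alpha>\tfrac53$ relies on the stronger bound, so the weaker one does not obviously close within a two-step budget. Finally, the two iterations must be routed: the paper first splits $I=-\widetilde I_1-\widetilde I_2$ so that one piece carries a profile derivative and the other a multiplier derivative, then integrates by parts a second time on $\widetilde I_2$ always and on $\widetilde I_1$ only when $|k_1-k_2|\le 10$, so that no term ever carries three derivatives on a single profile. Identifying which of $\max(k_1,k_2)\le n_1$, $|k_1-k_2|\ge10$, and $|k_1-k_2|\le 10$ you are in, and choosing the phase bound and the derivative routing accordingly, is precisely the ``symbol calculus and case bookkeeping'' that your proposal acknowledges but does not carry out.
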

	
	\subsubsection{Case: $\max(k_{1},k_{2})\le n_1$}
	Recall that
	\begin{align*}
	I_{n_1,n_2}^{k_{1},k_{2},k_{3}}(s,\xi)
	&=ic_1\iint_{\mathbb{R}^{3}\times\mathbb{R}^{3}}
	m_{n_1, n_2}^{k_1, k_2, k_3}(\eta,\sigma)e^{is\phi(\xi, \eta, \sigma)} \\
	&\qquad\qquad\qquad\times    \widehat{v_{k_{1}}}(s,\xi+\eta)
	\overline{\widehat{v_{k_{2}}}(}s,\xi+\eta+\sigma)
	\widehat{v_{k_3}}(s,\xi+\sigma)
	d\eta d\sigma,
	\end{align*}
	where
	$$
	m_{n_1,n_2}^{k_1, k_2,k_3}(\eta,\sigma)
	=  \vert\eta\vert^{-2}
	\beta_{n_1}(\eta)\beta_{n_2}(\sigma)\beta_{k_1}(\xi+\eta)\beta_{k_2}(\xi+\eta+\sigma).
	$$
	It can be readily checked that $m_{n_1,n_2}^{k_1, k_2,k_3}$ verifies the assumption of Lemma \ref{LemmaB} with $A(m_{n_1,n_2}^{k_1, k_2,k_3}) = 2^{-2n_1}$. Since $|\xi| \le |\xi+\eta+\sigma|+|\xi+\eta|+|\xi+\sigma|$ and hence $ \langle\xi\rangle^{5} \lesssim 2^{\frac{10m}{N-5}}$
	by $\eqref{restriction}$,
	we can estimate
	\begin{align*}
	\vert I_{n_1,n_2}^{k_{1},k_{2},k_{3}}(s,\xi)\vert
	&\lesssim
	2^{-2n_1}
	\| {v_{k_{1}}}\|_{L^{2}}
	\| {v_{k_{2}}}\|_{L^{2}}
	\| \widetilde P_{k_3}(u) \|_{L^{\infty}} \\
	&\lesssim
	s^{-\frac{3}{2}}
	2^{-2\max(k_1, k_2)}
	2^{\frac{3k_{1}}{2}}
	\langle 2^{k_{1}}\rangle^{-5}
	2^{\frac{3k_{2}}{2}}
	\langle 2^{k_{2}}\rangle^{-5}
	\epsilon_{1}^{3} \\
	&\lesssim
	2^{-\frac{3}{2}m}2^\frac{12m}{N-5}\bxi^{-5}
	\epsilon_{1}^{3} \\
	&\lesssim
	2^{(-1-3\delta_{0})m}
	\langle\xi\rangle^{-4}
	\epsilon_{1}^{3},
	\end{align*}
	provided $\delta_0 \le \frac16 - \frac{4}{N-5}$.
	
	\subsubsection{ Case: $\max(k_{1},k_{2})\ge n_1, \vert k_{1}-k_{2}\vert\ge 10$}
	\begin{lm}\label{derivative}
		Suppose	$f$ satisfies the condition of Proposition \ref{scattering}.
		Then we have
		\begin{align*}
		\| \nabla \widehat{v_{k}}(s)\|_{L^{2}}
		&\lesssim
		\min(2^{\frac{k}{2}},\langle 2^{k}\rangle^{-3})
		{s}^{2\delta_0}\epsilon_{1}, \\
		\|  \nabla^{2}\widehat{v_{k}}(s)\|_{L^{2}}	
		&\lesssim
		2^{-k}\langle 2^{k}\rangle^{-2}{s}^{2\delta_0}\epsilon_{1}.
		\end{align*}
	\end{lm}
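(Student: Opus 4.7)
The plan is to convert the Fourier-side $L^2$ norms into physical-space norms via Plancherel, namely $\|\nabla \widehat{v_k}\|_{L^2} \sim \|xv_k\|_{L^2}$ and $\|\nabla^2 \widehat{v_k}\|_{L^2} \sim \|x^2 v_k\|_{L^2}$ where $v_k = \widetilde{P}_k v$, and then commute the weights past the Littlewood--Paley projector as $xv_k = \widetilde{P}_k(xv) + [x,\widetilde{P}_k]v$ and $x^2 v_k = \widetilde{P}_k(x^2 v) + [x^2,\widetilde{P}_k]v$. The ``main'' pieces $\widetilde{P}_k(xv)$ and $\widetilde{P}_k(x^2v)$ will be controlled using Lemma \ref{lemma24} together with Bernstein-type bounds based on $\|xv\|_{H^3}$ and $\|x^2 v\|_{H^2} \lesssim s^{2\delta_0}\epsilon_1$, while the commutators reduce on the Fourier side (Leibniz) to products of $\nabla^{\ell}\widetilde{\beta}_k$ (satisfying $\|\nabla^{\ell}\widetilde{\beta}_k\|_{L^\infty} \lesssim 2^{-\ell k}$ with support in a shell of volume $\sim 2^{3k}$) against $\widehat{v}$ or $\nabla \widehat{v}$, which we dominate using the pointwise estimate $|\widehat{v}(\xi)| \le \langle\xi\rangle^{-5}\|\bxi^5 \widehat{v}\|_{L^\infty}$.

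For the first inequality, the high-frequency decay $\langle 2^k\rangle^{-3}s^{2\delta_0}\epsilon_1$ follows immediately from \eqref{ineq:xf}, while the commutator $[x,\widetilde{P}_k]v$ has Fourier transform $-i(\nabla\widetilde{\beta}_k)\widehat{v}$ and is bounded by $2^{-k}\cdot 2^{3k/2}\langle 2^k\rangle^{-5}\epsilon_1 = 2^{k/2}\langle 2^k\rangle^{-5}\epsilon_1$ via the shell-restriction. The delicate piece is the low-frequency behaviour: for $\al > 3/2$ the bound $2^{(2-\al)k}s^{(3-\al)\delta_0}\epsilon_1$ of Lemma \ref{lemma24} is \emph{strictly weaker} than the target $2^{k/2}s^{2\delta_0}\epsilon_1$ as $k \to -\infty$. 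I would bypass this by applying H\"older and Sobolev embedding in the frequency variable:
\[
\|\widetilde{P}_k(xv)\|_{L^2} = \|\widetilde{\beta}_k\,\nabla \widehat{v}\|_{L^2} \le \|\widetilde{\beta}_k\|_{L^3}\|\nabla \widehat{v}\|_{L^6} \lesssim 2^k \|\nabla^2 \widehat{v}\|_{L^2} \lesssim 2^k s^{2\delta_0}\epsilon_1,
\]
which combined with the commutator bound yields the desired $\min(2^{k/2},\langle 2^k\rangle^{-3})s^{2\delta_0}\epsilon_1$.

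For the second inequality I expand
\[
\nabla^2 \widehat{v_k} = (\nabla^2 \widetilde{\beta}_k)\widehat{v} + 2(\nabla\widetilde{\beta}_k)\otimes \nabla\widehat{v} + \widetilde{\beta}_k\,\nabla^2\widehat{v}
\]
and estimate each term separately: the first is $\lesssim 2^{-2k}\cdot 2^{3k/2}\langle 2^k\rangle^{-5}\epsilon_1 = 2^{-k/2}\langle 2^k\rangle^{-5}\epsilon_1$ by the shell-restriction; the middle rewrites as $-2i(\nabla\widetilde{\beta}_k)\widehat{xv}$ and is $\lesssim 2^{-k}\|\widetilde{P}_k(xv)\|_{L^2}$, hence by the first inequality $\lesssim 2^{-k}\min(2^{k/2},\langle 2^k\rangle^{-3})s^{2\delta_0}\epsilon_1$; and the third equals $-\mathcal{F}(\widetilde{P}_k(x^2v))$, bounded by $\langle 2^k\rangle^{-2}\|x^2 v\|_{H^2} \lesssim \langle 2^k\rangle^{-2}s^{2\delta_0}\epsilon_1$ via the standard Bernstein estimate $\|\widetilde{P}_k f\|_{L^2} \lesssim \langle 2^k\rangle^{-M}\|f\|_{H^M}$. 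Summing these three contributions and checking case-by-case that each is dominated by $2^{-k}\langle 2^k\rangle^{-2}s^{2\delta_0}\epsilon_1$ in both the low- and high-frequency regimes completes the argument. The principal obstacle throughout is the Sobolev refinement for $\widetilde{P}_k(xv)$ at low frequencies described above: it crucially exploits the simultaneous control of $xv \in H^3$ and $x^2 v \in H^2$ within $\Sigma_T$ (through the Sobolev embedding $\|\nabla \widehat{v}\|_{L^6} \lesssim \|\nabla^2 \widehat{v}\|_{L^2} = \|x^2 v\|_{L^2}$), without which one cannot recover the full $2^{k/2}$ decay needed in the first inequality.
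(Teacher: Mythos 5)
Your proposal is correct and follows essentially the same route as the paper: the identical Leibniz expansion of $\nabla\widehat{v_k}$ and $\nabla^2\widehat{v_k}$ into commutator pieces controlled by $\|\bxi^5\widehat v\|_{L^\infty}$ plus a main piece, and, most importantly, the same Sobolev--H\"older refinement $\|\widetilde\beta_k\widehat{xv}\|_{L^2}\le\|\widetilde\beta_k\|_{L^3}\|\nabla\widehat v\|_{L^6}\lesssim 2^k\|x^2v\|_{L^2}$ to recover the low-frequency decay that \eqref{ineq:xf} alone cannot supply. The only cosmetic difference is that you cite the lemma's first conclusion for the middle term in the second estimate where the paper bounds it directly by $2^{-k}\langle 2^k\rangle^{-2}\|xv\|_{H^2}$; both yield the claimed bound.
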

	\begin{proof}[Proof of Lemma \ref{derivative}]
		By Sobolev inequality we have
		\begin{align*}
		\|\widetilde\beta_{k}\widehat{xv}(s)\|_{L^{2}}
		\lesssim
		\|\widetilde\beta_{k}\|_{L^{3}}\|\widehat{xv}(s)\|_{L^{6}}
		\lesssim
		2^{k} \| \widehat{x^{2}v}(s)\|_{L^{2}}
		\lesssim
		2^{k}{s}^{2\delta_0}\epsilon_{1}.
		\end{align*}
		Also it is easy to see that
		\begin{align*}
		\|\widetilde\beta_{k}\widehat{xv}(s)\|_{L^{2}}
		\lesssim
		\langle 2^{k}\rangle^{-3} \|xv(s)\|_{H^{3}}
		\lesssim
		\langle 2^{k}\rangle^{-3}\bs^{\delta_0}\epsilon_{1}.
		\end{align*}
		Then, we have
		\begin{align*}
		\| \nabla\widehat{v_{k}}(s)\|_{L^{2}}
		&\lesssim
		2^{\frac{k}{2}}
		\|\widehat{v_{k}}\|_{L^{\infty}}	
		+\|\widetilde\beta_{k}\widehat{xv}(s)\|_{L^{2}}\\
		&\lesssim
		2^{\frac{k}{2}}\langle 2^{k}\rangle^{-5}
		\| \bxi^{5}\widehat{v}\|_{L^{\infty}}
		+ \min (2^{k},\langle 2^{k}\rangle^{-3}){s}^{2\delta_0}
		\epsilon_{1}\\
		&\lesssim
		\min(2^{\frac{k}{2}},\langle 2^{k}\rangle^{-3})
		{s}^{2\delta_0}
		\epsilon_{1}.
		\end{align*}
		And we estimate
		\begin{align*}
		\| \nabla^{2}\widehat{v_{k}}(s)\|_{L^{2}}
		&\lesssim
		2^{-\frac{k}{2}}
		\|\widehat{v_{k}}\|_{L^{\infty}}
		+ \|\nabla\widetilde\beta_{k}\otimes\nabla\widehat{v}\|_{L^{2}}
		+ \|\widetilde \beta_{k}\nabla^{2}\widehat{v}\|_{L^{2}}  \\
		&\lesssim
		2^{-\frac{k}{2}}\langle 2^{k}\rangle^{-5}\epsilon_{1}
		+2^{-k}\langle 2^{k}\rangle^{-2}\| xv\|_{H^{2}}
		+\langle 2^{k}\rangle^{-2}\|x^{2}v\|_{H^{2}} \\
		&\lesssim
		2^{-k}\langle 2^{k}\rangle^{-2}{s}^{2\delta_0}\epsilon_{1}.
		\end{align*}
	\end{proof}
	
	Here it should be noticed that
	in this range the following additional condition holds:
	\begin{equation}\label{conditiononk3}
	2^{k_{3}}\lesssim 2^{\max(k_{1},k_{2})}.
	\end{equation}
	This leads us to
	\begin{equation}\label{conditiononk}
	2^{k} \sim  2^{\max(k_{1},k_{2})} \lesssim 2^\frac{2m}{N-5}.
	\end{equation}

	In order to make an efficient integration by parts w.r.t. $\eta$ in the expression $(\ref{dyadic2})$ we introduce the identity
	$$
	e^{is\phi(\xi,\eta,\sigma)}
	=\frac{1}{is}
	\mathbf P(\xi,\eta,\sigma)   \cdot  \nabla_\eta[ e^{is\phi(\xi,\eta,\sigma)}],  \quad\mathbf P(\xi,\eta,\sigma) = \frac{\nabla_\eta [\phi(\xi,\eta,\sigma)]}{\vert\nabla_{\eta}[\phi(\xi,\eta,\sigma)]\vert^{2}}.
	$$
	Then we write
	\begin{equation*}
	I_{n_1,n_2}^{k_{1},k_{2},k_{3}}(s,\xi)
	= -\widetilde { I}_{1}(s,\xi) - \widetilde { I}_{2}(s,\xi),
	\end{equation*}
	where
	\begin{align*}
	\widetilde { I}_{1}(s,\xi)
	= \frac{c_1}{s}
	\iint_{\mathbb{R}^{3}\times\mathbb{R}^{3}}
	e^{is\phi(\xi,\eta,\sigma)}
	\mathbf Q(\xi,\eta,\sigma)\cdot
	&\nabla_{\eta}\left[
	\widehat{v_{k_{1}}}(s,\xi+\eta)
	\overline{\widehat{v_{k_{2}}}(}s,\xi+\eta+\sigma)
	\right]\\
	&\qquad\qquad\times \widehat{v_{k_3}(v)}(s,\xi+\sigma)
	d\eta d\sigma,
	\end{align*}
	\begin{align*}
	\widetilde { I}_{2}(s,\xi)
	= \frac{c_1}{s}
	\iint_{\mathbb{R}^{3}\times\mathbb{R}^{3}}
	e^{is\phi(\xi,\eta,\sigma)}
	\nabla_\eta \cdot [ \mathbf Q(\xi,\eta,\sigma)]
	&\widehat{v_{k_{1}}}(s,\xi+\eta)
	\overline{\widehat{v_{k_{2}}}(}s,\xi+\eta+\sigma)\\
	&\;\;\quad\times\widehat{v_{k_3}(v)}(s,\xi+\sigma)
	d\eta d\sigma
	\end{align*}
	and
	\begin{align*}
	\mathbf Q(\xi,\eta,\sigma)
	= \mathbf P(\xi,\eta,\sigma)
	\vert\eta\vert^{-2}
	\beta^{(n_0)}_{n_1}(\eta)\beta_{n_2}(\sigma)
	\beta_{k_{1}}(\xi+ \eta)
	\beta_{k_{2}}(\xi+\eta+\sigma).
	\end{align*}
	
	We first estimate $\widetilde { I}_{1}$.
	\begin{lm}\label{lm:ql}
		Consider
		\begin{equation}\label{condi:kj}
		\vert k_{1}-k_{2}\vert\ge 10
		, \ \max(k_{1},k_{2})\ge n_1
		\end{equation}
		and let
		$$
		\mathbf Q_{\gamma}^{\ell_1}(\xi,\eta,\sigma)
		= \nabla_{\eta}^{\ell_1};
		\left[
		\mathbf P(\xi,\eta,\sigma)
		\vert\eta\vert^{-\gamma}
		\beta^{n_0}_{n_1}(\eta)\beta_{n_2}(\sigma)
		\beta_{k_{1}}(\eta+\xi)
		\beta_{k_{2}}(\eta+\sigma+\xi)
		\right]
		$$
		for nonnegative integer $\gamma$ and $\ell_1$. Then $ \mathbf Q_{\gamma}^{\ell_1} $ satisfies the assumption of
		Lemma \ref{LemmaB} with
		\begin{equation}\label{AP}
		\| \mathcal{F}^{-1}
		(\mathbf Q_{\gamma}^{\ell_1})
		\|_{L^{1}(\mathbb{R}^{6})}
		\lesssim
		2^{-\max(k_{1},k_{2})(\alpha-1)}2^{-\gamma n_1}
		2^{-\ell_1\min(k_{1},n_1)}.
		\end{equation}
	\end{lm}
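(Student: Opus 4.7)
I plan to establish pointwise derivative bounds on $\mathbf{Q}_\gamma^{\ell_1}$ with respect to both $\eta$ and $\sigma$ on its support, and then invoke the same principle used throughout Section~4: Lemma~\ref{LemmaB} turns such symbol bounds into an $L^1$-estimate for $\mathcal{F}^{-1}$, exactly as the bounds \eqref{lemmaC}, \eqref{lemmaC2} and \eqref{ineq:lemmac3} were extracted from their pointwise counterparts \eqref{ineq:pt}, \eqref{ineq:pt2} and \eqref{ineq:pt4}. Since the hypothesis $\vert k_1-k_2\vert\ge 10$ is symmetric in $k_1,k_2$ but the claimed bound singles out $\min(k_1,n_1)$, I work under the WLOG convention $k_1\le k_2-10$; under this convention $\max(k_1,k_2)=k_2$ and $\min(k_1,k_2,n_1)=\min(k_1,n_1)$.

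The key step is to control the multiplier $\mathbf{P}=\nabla_\eta\phi/\vert\nabla_\eta\phi\vert^2$. Since
\begin{equation*}
\nabla_\eta\phi(\xi,\eta,\sigma)=-\alpha\vert\xi+\eta\vert^{\alpha-2}(\xi+\eta)+\alpha\vert\xi+\eta+\sigma\vert^{\alpha-2}(\xi+\eta+\sigma),
\end{equation*}
the two summands have magnitudes $\sim 2^{(\alpha-1)k_1}$ and $\sim 2^{(\alpha-1)k_2}$ on the support; because $k_2\ge k_1+10$ and $\alpha>1$, the second dominates and Lemma~\ref{lowbound} yields $\vert\nabla_\eta\phi\vert\gtrsim 2^{(\alpha-1)k_2}$. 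Hence $\vert\mathbf{P}\vert\lesssim 2^{-(\alpha-1)k_2}$, and iterating via the quotient rule together with the standard scaling $\vert\nabla^\ell\vert\cdot\vert^\alpha\vert\lesssim\vert\cdot\vert^{\alpha-\ell}$ gives $\vert\nabla_\eta^a\nabla_\sigma^b\mathbf{P}\vert\lesssim 2^{-(\alpha-1)k_2}\cdot 2^{-(\vert a\vert+\vert b\vert)k_2}$.

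Next, Leibniz rule applied to the product $\mathbf{P}\,\vert\eta\vert^{-\gamma}\beta_{n_1}^{(n_0)}(\eta)\beta_{n_2}(\sigma)\beta_{k_1}(\xi+\eta)\beta_{k_2}(\xi+\eta+\sigma)$, with the natural length scales $2^{k_2}$ (for $\mathbf{P}$ and $\beta_{k_2}$), $2^{n_1}$ (for $\vert\eta\vert^{-\gamma}$ and $\beta_{n_1}^{(n_0)}$), $2^{k_1}$ (for $\beta_{k_1}$), and $2^{n_2}$ (for $\beta_{n_2}$), shows that each $\eta$-derivative of $\mathbf{Q}_\gamma^0$ costs at most $2^{-\min(k_1,n_1)}$ (since $2^{-k_2}\le 2^{-k_1}$ under our WLOG) and each $\sigma$-derivative at most $2^{-\min(k_2,n_2)}$. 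Combined with $\vert\mathbf{Q}_\gamma^0\vert\lesssim 2^{-(\alpha-1)k_2}\cdot 2^{-\gamma n_1}$, this gives, for all multi-indices $a,b$,
\begin{equation*}
\vert\nabla_\eta^a\nabla_\sigma^b\mathbf{Q}_\gamma^{\ell_1}\vert\lesssim 2^{-(\alpha-1)k_2}\,2^{-\gamma n_1}\,2^{-\ell_1\min(k_1,n_1)}\,2^{-\vert a\vert\min(k_1,n_1)}\,2^{-\vert b\vert\min(k_2,n_2)}
\end{equation*}
on the support. Since all the cutoffs are active, the $(\eta,\sigma)$-support of $\mathbf{Q}_\gamma^{\ell_1}$ is contained in a product of balls of radii $\sim 2^{\min(k_1,n_1)}$ and $\sim 2^{\min(k_2,n_2)}$, which match the derivative scales; Lemma~\ref{LemmaB} then converts these estimates into the desired bound \eqref{AP}.

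The delicate point is the non-degeneracy of $\nabla_\eta\phi$: without the separation $\vert k_1-k_2\vert\ge 10$ the two competing terms in $\nabla_\eta\phi$ could nearly cancel, and both $\vert\mathbf{P}\vert$ and its derivatives would blow up, invalidating the whole integration-by-parts argument. Once the non-degeneracy is secured via Lemma~\ref{lowbound}, the remainder is routine Leibniz-rule bookkeeping.
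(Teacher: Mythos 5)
Your plan to estimate $\nabla_\eta^a\nabla_\sigma^b\mathbf Q_\gamma^{\ell_1}$ pointwise and feed the bounds into Lemma~\ref{LemmaB} is the right general strategy, but you skip the change of variables that the paper performs, and this creates two real problems.

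First, the claimed bound $\vert\nabla_\eta^a\nabla_\sigma^b\mathbf P\vert\lesssim 2^{-(\alpha-1)k_2}2^{-(\vert a\vert+\vert b\vert)k_2}$ is false. Since
\begin{equation*}
\nabla_\eta\phi=\alpha\bigl(-\vert\xi+\eta\vert^{\alpha-2}(\xi+\eta)+\vert\xi+\eta+\sigma\vert^{\alpha-2}(\xi+\eta+\sigma)\bigr)
\end{equation*}
is a \emph{difference} of terms living at the two scales $k_1$ and $k_2$, and $\alpha-2<0$, the second $\eta$-derivative is dominated by the \emph{subleading} frequency: $\vert\nabla_\eta^2\phi\vert\sim 2^{(\alpha-2)\min(k_1,k_2)}$, which is strictly larger than $2^{(\alpha-2)k_2}$ whenever $k_1<k_2$. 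Consequently the true $\eta$-derivative cost of $\mathbf P$ is $2^{(\alpha-2)\min(k_1,k_2)-(\alpha-1)\max(k_1,k_2)}$, lying between $2^{-k_2}$ and $2^{-\min(k_1,k_2)}$, not $2^{-k_2}$. Your ``standard scaling'' $\vert\nabla^\ell\vert\cdot\vert^\alpha\vert\lesssim\vert\cdot\vert^{\alpha-\ell}$ applies to a single homogeneous term at one scale, not to the difference; the final $\eta$-derivative cost you quote for $\mathbf Q_\gamma^0$ still comes out as $2^{-\min(k_1,n_1)}$ under your WLOG only because the factor $\beta_{k_1}(\xi+\eta)$ supplies a larger $2^{-k_1}$, masking the slip.

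Second, the WLOG $k_1\le k_2-10$ is not available, because the target bound $2^{-\ell_1\min(k_1,n_1)}$ is \emph{not} symmetric under $k_1\leftrightarrow k_2$. Working in the original variables, an $\eta$-derivative also hits $\beta_{k_2}(\eta+\sigma+\xi)$, which costs $2^{-k_2}$; when $k_2<k_1$ this exceeds $2^{-\min(k_1,n_1)}$, and your direct Leibniz bookkeeping would deliver at best $2^{-\ell_1\min(k_1,k_2,n_1)}$, a strictly weaker statement. The paper escapes this by first changing variables $(\eta,\sigma)\mapsto(\eta+\xi,\eta+\sigma+\xi)$, after which $\mathbf P$ becomes $(\vert\sigma\vert^{\alpha-2}\sigma-\vert\eta\vert^{\alpha-2}\eta)/\vert\cdots\vert^2$ and the cutoffs reorganize so that the $\eta$-derivative only sees the $k_1$ and $n_1$ scales (plus $n_2\sim\max(k_1,k_2)\ge k_1$, which is harmless) while the $\sigma$-derivative only sees $k_2$. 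This decoupling is precisely what singles out $\min(k_1,n_1)$ in the statement, and it is the step your argument is missing.
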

	\begin{proof}[Proof of Lemma\ref{lm:ql}]
		Performing integration by parts and then change of variables, we have	
		\begin{align*}
		&\left\| \iint_{\mathbb{R}^{3}\times\mathbb{R}^{3}}
		\mathbf Q_{\gamma}^{\ell_1}(\xi,\eta,\sigma)
		e^{ix\eta}e^{iy\sigma}
		d\eta d\sigma \right\|_{L_{x,y}^{1}} \\
		&\qquad\qquad\qquad\sim\left\|
		x^{\ell_1}
		\iint_{\mathbb{R}^{3}\times\mathbb{R}^{3}}
		\mathbf Q_{\gamma}^{0}(\xi,\eta,\sigma)
		e^{ix\eta}e^{iy\sigma}
		d\sigma d\eta
		\right\|_{L_{x,y}^{1}} \\
		&\qquad\qquad\qquad\sim\left\|
		x^{\ell_1}
		\iint_{\mathbb{R}^{3}\times\mathbb{R}^{3}}
		\widetilde{\mathbf Q_{\gamma}^{0}}(\xi,\eta,\sigma)
		e^{ix\eta}e^{iy\sigma}
		d\eta d\sigma\right\|_{L_{x,y}^{1}},
		\end{align*}
		where
		$$
		\widetilde{\mathbf Q_{\gamma}^{0}}(\xi,\eta,\sigma)
		=\frac{\vert\sigma\vert^{\alpha-2}\sigma -\vert\eta\vert^{\alpha-2}\eta}
		{\big\vert \vert\sigma\vert^{\alpha-2}\sigma -\vert\eta\vert^{\alpha-2}\eta \big\vert^{2}}
		\vert\eta-\xi\vert^{-\gamma}
		\beta_{n_1}(\eta-\xi)\beta_{n_2}(\sigma-\eta)
		\beta_{k_{1}}(\eta)\beta_{k_{2}}(\sigma).
		$$	
		In the range of \eqref{condi:kj}, we see that $\widetilde{\mathbf Q_{\gamma}^{0}}$ verifies the following inequality: for any positive integers $\widetilde \ell_1,\widetilde \ell_2$
		\begin{align*}
		\vert \nabla_{\eta}^{\widetilde\ell_1}\nabla_{\sigma}^{\widetilde\ell_2}
		\widetilde{\mathbf Q_{\gamma}^{0}}(\xi,\eta,\sigma) \vert
		&\lesssim
		2^{-\max(k_{1},k_{2})(\alpha-1)}
		2^{-\gamma n_1}
		2^{-\min(k_{1},n_1)\widetilde\ell_1}
		2^{-k_{2}\widetilde\ell_2} \\
		&\qquad\qquad \times \widetilde \beta_{n_1}(\eta-\xi)\widetilde \beta_{n_2}(\sigma-\eta)
		\widetilde \beta_{k_{1}}(\eta)
		\widetilde \beta_{k_{2}}(\sigma).
		\end{align*}	
		This gives us the desired result.
	\end{proof}
	
	Now applying Lemma \ref{LemmaB} to $\widetilde{ I}_{1}$ with \eqref{AP} and Lemma \ref{derivative}, we estimate
	\begin{align}
	\begin{aligned}\label{estimate:Worst case}
	\vert \widetilde { I}_{1}(s,\xi)\vert
	&\lesssim
	s^{-1}
	2^{-\max(k_{1},k_{2})(\alpha-1)}
	2^{-2n_1}\\
	&\qquad \times
	\big(
	\|\nabla \widehat{v_{k_{1}}}\|_{L^{2}}
	\|\widehat{v_{k_{2}}}\|_{L^{2}}
	+\|\widehat{v_{k_{1}}}\|_{L^{2}}
	\|\nabla \widehat{v_{k_{2}}}\|_{L^{2}}
	\big)
	\| u_{k_{3}}(s)\|_{L^{\infty}}\\
	&\lesssim
	2^{-m}
	2^{-\max(k_{1},k_{2})(\alpha-1)}
	2^{-2n_1}
	\big(
	\langle 2^{k_{1}}\rangle^{-3}2^{\frac{k_{1}}{2}}2^{2m\delta_0}
	2^{\frac{3k_{2}}{2}}\langle 2^{k_{2}}\rangle^{-5} \\
	&\qquad+
	2^{\frac{3k_{1}}{2}}\langle 2^{k_{1}}\rangle^{-5}
	\langle 2^{k_{2}}\rangle^{-3}
	2^{\frac{k_{2}}{2}}2^{2m\delta_0}
	\big)
	2^{-\frac{3m}{2}}\epsilon_{1}^{3} \\
	&\lesssim
	2^{(-\frac{5}{2}+2\delta_0)m}2^{-2n_0}
	\langle 2^{k}\rangle^{-3}\epsilon_{1}^{3}\quad (n_1 > n_0)\\
	&\lesssim 2^{(-\frac{5}{2}+2\delta_0)m}2^{(\frac{4+4\delta_0}{\al+1} + \frac{20}{3}\theta)m}2^\frac{4m}{N-5}
	\langle 2^{k}\rangle^{-5}\epsilon_{1}^{3}\\
	&\lesssim
	2^{(-1-3\delta_0)m}
	\langle 2^{k}\rangle^{-5}
	\epsilon_{1}^{3},
	\end{aligned}
	\end{align}
	since $\alpha>5/3$ and $0 < \delta_0 \le \delta_3 := \frac1{5\al+9}(\al-\frac53 - \frac{4(\al+1)}{N-5})$. We have used \eqref{conditiononk} for the third and last inequality. From now on, we will use this technique repeatedly to derive $\langle 2^{k}\rangle^{-5}$ term without mentioning it.
	
	Let us move onto $\widetilde { I}_{2}$. Due to the additional term $2^{-\min(k_{1},n_1)}$ in \eqref{AP}, we cannot apply the Lemma \ref{LemmaB} to $\widetilde {I}_{2}$ as before.
	So, we perform an additional integration by parts in $\widetilde {I}_{2}$
	\begin{align}
	&\widetilde {I}_{2}(s,\xi)
	=\widetilde {J_{1}}(s,\xi) + \widetilde {J_{2}}(s,\xi), \label{J1J2}\\
	&\widetilde { J_{1}}(s,\xi)
	=
	i\frac{c_1}{s^{2}}
	\iint_{\mathbb{R}^{3}\times\mathbb{R}^{3}}
	e^{is\phi(\xi,\eta,\sigma)}
	[\nabla_{\eta}\cdot\mathbf Q
	(\xi,\eta,\sigma)] \nonumber  \\
	&\qquad \times
	\mathbf P(\xi,\eta,\sigma) \cdot \nabla_{\eta}\left(
	\widehat{v_{k_{1}}}(s,\xi+\eta)
	\overline{\widehat{v_{k_{2}}}(}s,\xi+\eta+\sigma)
	\right)
	\widehat{v_{k_{3}}}(s,\sigma+\xi)
	d\sigma d\eta, \nonumber\\
	&\widetilde {J_{2}}(s,\xi)
	=
	i\frac{c_1}{s^{2}}
	\iint_{\mathbb{R}^{3}\times\mathbb{R}^{3}}
	e^{is\phi(\xi,\eta,\sigma)}
	\nabla_{\eta}\cdot [((\nabla_{\eta} \cdot \mathbf Q) \mathbf P)
	(\xi,\eta,\sigma)]\nonumber \\
	&\qquad\qquad\qquad\qquad\times
	\widehat{v_{k_{1}}}(s,\xi+\eta)
	\overline{\widehat{v_{k_{2}}}(}s,\xi+\eta+\sigma)
	\widehat{v_{k_{3}}}(s,\sigma+\xi)
	d\sigma d\eta.\nonumber
	\end{align}
	By Lemma~\ref{lm:ql} and Lemma~\ref{LemmaB2} we have
	\begin{align*}
	\| \mathcal{F}^{-1}
	((\nabla_{\eta}\cdot \mathbf Q)\mathbf P)
	\|_{L^{1}(\mathbb{R}^{6})}
	&\lesssim
	\| \mathcal{F}^{-1}
	(    \mathbf Q_{2}^{1} \otimes
	\mathbf Q_{0}^{0})
	\|_{L^{1}(\mathbb{R}^{6})} \\
	%&=\| \mathcal{F}^{-1}
	%( (\nabla_{\eta}\cdot \mathbf Q)\mathbf R)
	%\|_{L^{1}(\mathbb{R}^{6})} \\
	%&\lesssim
	%\| \mathcal{F}^{-1}
	%(\mathbf R)
	%\|_{L^{1}(\mathbb{R}^{6})}
	%\| \mathcal{F}^{-1}
	%(\nabla_{\eta}\cdot \mathbf Q)
	%\|_{L^{1}(\mathbb{R}^{6})} \\
	&\lesssim
	2^{-2\max(k_{1},k_{2})(\alpha-1)}
	2^{-2n_1}
	2^{-\min(k_{1},n_1)}.
	\end{align*}
	%where
	%$$
	%        \mathbf R(\xi,\eta,\sigma)
	%   = \mathbf P(\xi,\eta,\sigma)
	%   \widetilde\beta^{(n_0)}_{n_1}(\eta)\widetilde\beta_{n_2}(\sigma)
	%   \widetilde\beta_{k_{2}}(\eta+\sigma+\xi)
	%   \widetilde\beta_{k_{1}}(\eta+\xi).
	%$$
	Applying Lemma \ref{LemmaB} to $\widetilde {J_{1}}$ with above bound and using Lemma \ref{derivative}, we obtain
	\begin{align*}
	\vert \widetilde { J_{1}}(s,\xi)\vert
	&\lesssim
	s^{-2}2^{-2\max(k_{1},k_{2})(\alpha-1)}
	2^{-2n_1}
	2^{-\min(k_{1},n_1)} \\
	&\qquad\qquad\times
	\Big[
	\|\nabla\widehat{v_{k_{1}}}\|_{L^{2}}
	\|\widehat{v_{k_{2}}}\|_{L^{2}}
	+\|\widehat{v_{k_{1}}}\|_{L^{2}}
	\|\nabla\widehat{v_{k_{2}}}\|_{L^{2}}
	\Big]
	\| u_{k_{3}}\|_{L^{\infty}} \\
	&\lesssim
	2^{-\frac{7}{2}m+2\delta_{0}m}
	\langle 2^{k}\rangle^{-3}
	2^{-2n_1}
	2^{-\min(k_{1},n_1)}
	\epsilon_{1}^{3} \\
	&\lesssim
	s^{-\frac{7}{2}m+2\delta_{0}m}
	\langle 2^{k}\rangle^{-5}
	2^{m\big(\frac{4}{N-5}
		+\frac{4+4\delta_0}{\alpha+1}+\frac{20\theta }{3}
		+\frac{2}{3}
		\big(\frac{\alpha+2}{\alpha+1}(1+2\delta_0)
		+\frac{5\theta}{3}\big)\big)}
	\epsilon_{1}^{3}.
	\end{align*}
	If $\al > \frac53$, then $$\frac{2}{3}
	\big(\frac{\alpha+2}{\alpha+1}(1+2\delta_0)
	+\frac{5\theta}{3}\big) < 1.$$
	And hence we get as in \eqref{estimate:Worst case} that
	$$\vert \widetilde { J_{1}}(s,\xi)\vert \lesssim
	2^{(-1-3\delta_0)m}\langle 2^{k}\rangle^{-5}\epsilon_{1}^{3}.$$
	To estimate $\widetilde {J_{2}}$ in \eqref{J1J2},
	we only use the pointwise bound
	$$
	\vert
	\nabla_{\eta}\cdot((\nabla_{\eta}\cdot\mathbf Q)\mathbf P)
	(\xi,\eta,\sigma) \vert
	\lesssim 2^{-2\max(k_{1},k_{2})(\alpha-1)}
	2^{-2n_1}2^{-2\min(k_{1},k_{2},n_1)},
	$$
	and then we see that
	\begin{align*}
	\vert \widetilde { J_{2}}(s,\xi) \vert
	&\lesssim
	\frac{1}{s^{2}}
	\| \nabla_{\eta}\cdot((
	\nabla_{\eta}\cdot \mathbf Q) \mathbf P)
	\|_{L_{\eta,\sigma}^{\infty}}
	\| \beta_{n_1}\widehat{v_{k_{1}}}(\xi+\cdot)\|_{L^{1}}
	\| \widehat{v_{k_{2}}} \|_{L^{2}}
	\| \widehat{v_{k_{3}}} \|_{L^{2}} \\
	&\lesssim
	s^{-2}
	2^{-2\max(k_{1},k_{2})(\alpha-1)}
	2^{-2n_1}
	2^{-2\min(k_{1},k_{2},n_1)}
	\min(2^{3n_1}, 2^{3k_{1}}) \\
	&\qquad\qquad\qquad\qquad\times
	\langle 2^{k_{1}}\rangle^{-5}
	2^{\frac{3k_{2}}{2}}\langle2^{k_{2}}\rangle^{-5}
	2^{\frac{3k_{3}}{2}}\langle2^{k_{3}}\rangle^{-5}\epsilon_1^3.
	\end{align*}
	Using \eqref{conditiononk3}, we see that
	\begin{displaymath}
	\vert \widetilde {J_{2}}(s,\xi) \vert
	\lesssim
	\left\{
	\begin{array}{ll}
	\epsilon_{1}^{3}
	2^{-2m}\langle 2^{k} \rangle^{-5}
	2^{-\frac{k_{2}}{2}},
	&\ \textrm{if} \
	\min(k_{1},k_{2},n_1)=k_{2}, \\
	\epsilon_{1}^{3}2^{-2m}
	\langle 2^{k} \rangle^{-5}
	2^{-n_1},
	& \ \textrm{otherwise}.
	\end{array} \right.
	\end{displaymath}
	And both cases give the desired bound of $\widetilde {J_{2}}$
	because of $(\ref{restriction})$.

	\subsubsection{ Case: $\max(k_{1},k_{2})\ge n_1,
		\vert k_{1}-k_{2}\vert\le 10 $}
	$\newline$
	First, let us observe that
	\[
	2^{k_{3}}\lesssim 2^{k} \sim 2^{k_{1}}\sim 2^{k_{2}}
	\lesssim 2^\frac{2m}{N-5}.
	\]
	Since $\vert \xi+\eta\vert \sim 2^{k_{1}} $,
	$ \vert \xi+\eta+\sigma \vert \sim 2^{k_{2}} $
	with $k_{1}\sim k_{2}$,
	$\vert \eta \vert \sim 2^{n_1}$
	and
	$\vert \sigma \vert \sim 2^{n_2}$,
	one has
	\begin{equation}\label{ineq:PnQn}
	\begin{split}
	&\vert\nabla_{\eta}^{\ell_1}\nabla_{\sigma}^{\ell_2}
	\mathbf P(\xi,\eta,\sigma)\vert
	\lesssim
	2^{k_{1}(2-\alpha)}2^{-n_2}
	2^{-n_1\ell_1}2^{-n_2\ell_2}, \\
	&\vert\nabla_{\eta}^{\ell_1}\nabla_{\sigma}^{\ell_2}
	\mathbf Q(\xi,\eta,\sigma)\vert
	\lesssim
	2^{k_{1}(2-\alpha)}2^{-n_2}
	2^{-2n_1}
	2^{-n_1\ell_1}2^{-n_2\ell_2}
	\end{split}
	\end{equation}
	for any positive integers $\ell_1, \ell_2$.
	These can be induced from mean-value theorem,
	Lemma \ref{lowbound} and the observation that
	$n_2\le \max(k_{1},k_{2})+10$.
	As a consequence,
	\begin{equation}\label{AQn}
	\|
	\mathcal{F}^{-1}(\mathbf P \otimes \mathbf Q)
	\|_{L^{1}(\mathbb{R}^{6})}
	\lesssim
	2^{k_{1}(2-\alpha)}2^{-2n_1}2^{-2n_2},
	\end{equation}
	
	\begin{equation}\label{APPQ}
	\| \mathcal{F}^{-1}( (\nabla_\eta \cdot \mathbf Q)\mathbf P)\|_{L^{1}(\mathbb{R}^{6})}
	\lesssim
	2^{2k_{1}(2-\alpha)}2^{-3n_1}2^{-2n_2}.
	\end{equation}
	%Here, we thought of $P_{m},Q_{n}$ as a function on $\mathbb{R}^{6}.$
	Now if we try to estimate $\widetilde { I}_{1}$ applying
	the Lemma \ref{LemmaB} with \eqref{AQn} as before, then due to $2^{-n_2}$ term we cannot obtain the desired result in this case.
	So, we perform an integration by parts twice
	in the expression for $I_{n_1,n_2}^{k_{1},k_{2},k_{3}}$
	in \eqref{dyadic2}. With the previous calculation in \eqref{J1J2}, we write
	$$
	\vert I_{n_1,n_2}^{k_{1},k_{2},k_{3}}(s,\xi) \vert
	\lesssim
	\vert \widetilde { J_{1}}(s,\xi)\vert
	+\vert \widetilde { J_{2}}(s,\xi)\vert
	+\vert \widetilde { J_{3}}(s,\xi)\vert,
	$$
	\begin{align*}
	\widetilde { J_{3}}(s,\xi)
	=
	i\frac{c_1}{s^{2}}
	&\iint_{\mathbb{R}^{3}\times\mathbb{R}^{3}}
	e^{is\phi(\xi,\eta,\sigma)}
	(\mathbf P \otimes \mathbf Q)(\xi,\eta,\sigma) ;\\
	&\quad\quad \nabla_{\eta}^2
	\big(\widehat{v_{k_{1}}}(s,\xi+\eta)
	\overline{\widehat{v_{k_{2}}}(}s,\xi+\eta+\sigma)\big)
	\widehat{v_{k_{3}}}(s,\sigma+\xi)
	d\sigma d\eta.
	\end{align*}
	First applying Lemma \ref{LemmaB} to $\widetilde { J_{3}}$ with  \eqref{AQn}, and then Lemma \ref{derivative}, we have as in \eqref{estimate:Worst case} that
	\begin{equation}
	\begin{split}
	\vert \widetilde {J_{3}}(s,\xi)\vert
	&\lesssim
	s^{-2}2^{2k_{1}(2-\alpha)}2^{-2n_1}2^{-2n_2}
	\| u_{3}\|_{L^{\infty}}\nonumber \\
	&\ \  \times \Big(\| \nabla^2\widehat{v_{k_{1}}}\|_{L^{2}}
	\| \widehat{v_{k_{2}}}\|_{L^{2}}
	+\| \nabla\widehat{v_{k_{1}}}\|_{L^{2}}\| \nabla\widehat{v_{k_{2}}}\|_{L^{2}}
	+\| \widehat{v_{k_{1}}}\|_{L^{2}}
	\|\nabla^2\widehat{v_{k_{2}}}\|_{L^{2}}
	\Big) \\
	&\lesssim
	2^{(-\frac{7}{2}+4\delta_0)m}
	2^{-2n_1}2^{-2n_2}\langle 2^{k}\rangle^{-6}\epsilon_{1}^{3}\nonumber \\
	&\lesssim 2^{(-\frac{7}{2}+4\delta_0)m} 2^{-\frac23(n_1 + 3n_2)}2^{-\frac{4n_1}{3}}\langle 2^{k}\rangle^{-5}\epsilon_{1}^{3}\nonumber \\
	&\lesssim 2^{(-\frac{7}{2}+4\delta_0)m}2^{\frac23(1+2\delta_0)m}2^{-\frac{4n_1}{3}} \langle 2^{k}\rangle^{-5}
	\epsilon_{1}^{3}\nonumber\\
	&\lesssim
	2^{(-1-3\delta_0)m}
	\langle 2^{k}\rangle^{-5}
	\epsilon_{1}^{3},
	\end{split}
	\end{equation}
	provided $\delta_0 \le \frac1{100}$. In the third inequality, we have used $n_1 > n_0$ and
	$n_1 + 3 n_2\ge -(1+2\delta_0)m$.

	To estimate $\widetilde { J_{1}}$, we also apply Lemma \ref{LemmaB}
	with \eqref{APPQ}:
	\begin{align*}
	&   | \widetilde { J_{1}}(s,\xi) |\\
	&\lesssim
	s^{-2}2^{2k_{1}(2-\alpha)}2^{-3n_1}2^{-2n_2}
	\big(\| \nabla\widehat{v_{k_{1}}}\|_{L^{2}}
	\| \widehat{v_{k_{2}}}\|_{L^{2}}
	+ \| \widehat{v_{k_{1}}}\|_{L^{2}}
	\| \nabla\widehat{v_{k_{2}}}\|_{L^{2}}
	\big)
	\| u_{3}\|_{L^{\infty}} \\
	&\lesssim
	2^{-\frac{7m}{2}}2^{2\delta_0m}2^{-\frac23(n_1+3n_2)}
	2^{-\frac{7n_1}3}\langle 2^{k}\rangle^{-5}\epsilon_{1}^{3} \\
	&\lesssim
	2^{(-\frac{7}{2} +2\delta_0)m}2^{(\frac23 + \frac{4\delta_0}3)m}2^{(\frac{14+14\delta_0}{3(\al+1)} + \frac{21\al-35}{36(\al+1)})m}
	\langle 2^{k}\rangle^{-5}\epsilon_{1}^{3} \\
	&\lesssim
	2^{(-1-3\delta_0)m}
	\langle 2^{k}\rangle^{-5}
	\epsilon_{1}^{3}
	\end{align*}
	for $\delta_0 \le \frac1{100}$. $\widetilde { J_{2}}$ can be treated with the pointwise bound for
	$\nabla_{\eta}((\nabla_{\eta}\cdot \mathbf Q)\mathbf P)$.
	From \eqref{ineq:PnQn}, it follows that
	$$
	\vert
	\nabla_{\eta}((\nabla_{\eta}\cdot \mathbf Q)\mathbf P)
	(\xi,\eta,\sigma) \vert
	\lesssim
	2^{-2n_2}2^{(4-2\alpha)k_{1}}2^{-4n_1}.
	$$
	Note that in this range we have $2^{n_2} \lesssim 2^{\max(k_1, k_2)}$.
	Thus if $\delta_0 \le \frac1{100}$, then we see that
	\begin{align*}
	\vert \widetilde {J_{2}}(s,\xi) \vert
	&\lesssim
	\frac{1}{s^{2}}
	\| \nabla_{\eta}((\nabla_{\eta}\cdot \mathbf Q)\mathbf P)
	\|_{L_{\eta,\sigma}^{\infty}}
	\| \beta_{n_1}^{(n_0)}\|_{L^{1}}
	\| \beta_{n_2}\|_{L^{1}}
	\prod_{i=1}^3\| \widehat{v_{k_{i}}}\|_{L^{\infty}} \\
	&\lesssim
	s^{-2}
	2^{(4-2\alpha)k_{1}}2^{n_2}2^{-n_1}
	\langle 2^{k_{1}}\rangle^{-5}\langle 2^{k_{2}}\rangle^{-5}
	\langle2^{k_{3}}\rangle^{-5}\epsilon_{1}^{3}\\
	&\lesssim
	s^{-2}
	2^{(4-2\alpha)k_{1}}2^{\max(k_1, k_2)}2^{-n_1}
	\langle 2^{k_{1}}\rangle^{-5}\langle 2^{k_{2}}\rangle^{-5}
	\langle2^{k_{3}}\rangle^{-5}\epsilon_{1}^{3}\\
	&\lesssim
	2^{-2m}2^{-n_1}
	\langle2^{k}\rangle^{-5}\epsilon_{1}^{3} \\
	&\lesssim  2^{-2m}2^{(\frac{2+2\delta_0}{\al+1} + \frac{3\al-5}{12(\al+1)})}\langle2^{k}\rangle^{-5}\epsilon_{1}^{3} \\
	&\lesssim
	2^{(-1-3\delta_0)m}
	{\langle 2^{k}\rangle^{-5}}
	\epsilon_{1}^{3}.
	\end{align*}
	
	\section*{Acknowledgements}
	The authors would like to thank the anonymous referees for their careful reading of our manuscript and the valuable comments.
	
	This work was supported in part by National Research Foundation of Korea (NRF-2015R1D1A1A09057795).
	C. Yang was also supported in part by National Research Foundation of Korea (NRF-2015R1A4A1041675)
	
	\section{appendix}
	
	\begin{lm}\label{LemmaB}
		Assume that
		$\mathbf m\in L^{1}(\mathbb{R}^{d}\times\mathbb{R}^{d}), d \ge 1$
		satisfies
		\begin{equation}\label{ineq:m}
		\Big\|
		\int_{\mathbb{R}^{d}\times\mathbb{R}^{d}}
		\mathbf m(\eta,\sigma)e^{ix\eta}e^{iy\sigma}
		d\eta d\sigma
		\Big\|_{L_{x,y}^{1}}
		\le A(\mathbf m).
		\end{equation} \\
		Then for any $(p,q,r)$ with $1 \le p, q, r \le \infty$ and $\frac{1}{p}+\frac{1}{q}+\frac{1}{r}=1,$ \\
		$$
		\left\vert
		\int_{\mathbb{R}^{d}\times\mathbb{R}^{d}}
		\mathbf  m(\eta,\sigma)
		\widehat{v}(\eta)\widehat{g}(\sigma)\widehat{h}(\pm\eta\pm\sigma)
		d\eta d\sigma
		\right\vert
		\lesssim
		A(m)
		\| v \|_{L^{p}}
		\| g \|_{L^{q}}
		\| h \|_{L^{r}}.
		$$
		Moreover, for all $p,q$ with
		$\frac{1}{p}+\frac{1}{q}=\frac{1}{2},$
		one has
		$$
		\Big\|
		\int_{\mathbb{R}^{d}}
		m(\xi,\eta)
		\widehat{v}(\xi\pm\eta)
		\widehat{g}(\eta)
		d\eta
		\Big\|_{L_{\xi}^{2}}
		\lesssim
		A(m)
		\| v \|_{L^{p}}
		\| g \|_{L^{q}}.
		$$
	\end{lm}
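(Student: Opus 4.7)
The plan is to deduce both inequalities from the single fact that the hypothesis \eqref{ineq:m} says exactly that the inverse Fourier transform $K := \mathcal F^{-1}_{(\eta,\sigma)\to(x,y)}\mathbf m$ lies in $L^1(\mathbb R^{2d})$ with $\|K\|_{L^1}\lesssim A(\mathbf m)$, so that $\mathbf m$ admits the representation $\mathbf m(\eta,\sigma)=c\int K(x,y)e^{ix\cdot\eta+iy\cdot\sigma}\,dx\,dy$. Plugging this representation into the trilinear integral turns the multiplier estimate into a pure Hölder-type statement in physical space.

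For the trilinear bound (I handle the $+,+$ case; the other sign choices follow by reflecting $\eta\mapsto -\eta$ or $\sigma\mapsto -\sigma$, which preserves $\|K\|_{L^1}$), I would write $\widehat v, \widehat g, \widehat h$ as Fourier integrals of $v,g,h$ and then carry out the $(\eta,\sigma)$-integration first. The phase factor collected is $e^{-ia\cdot\eta-ib\cdot\sigma-ic\cdot(\eta+\sigma)}$ for integration variables $a,b,c$ from $v,g,h$ respectively, and combined with the representation of $\mathbf m$ above, the $(\eta,\sigma)$ integral collapses to a multiple of $K(-(a+c),-(b+c))$. After the change of variables $u=a+c,\ w=b+c$ the trilinear form becomes
\begin{equation*}
c\int K(u,w)\Bigl(\int v(u-c)\,g(w-c)\,h(c)\,dc\Bigr)\,du\,dw,
\end{equation*}
and Hölder's inequality in $c$ with exponents $(p,q,r)$ satisfying $\tfrac1p+\tfrac1q+\tfrac1r=1$ bounds the inner integral, uniformly in $(u,w)$, by $\|v\|_{L^p}\|g\|_{L^q}\|h\|_{L^r}$. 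Pulling this out and using $\|K\|_{L^1}\lesssim A(\mathbf m)$ gives the first inequality.

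For the bilinear $L^2_\xi$ bound I would simply dualize: given $\psi\in L^2_\xi$ with $\|\psi\|_{L^2}=1$, define $h$ by $\widehat h=\psi$, so that by Plancherel $\|h\|_{L^2}\sim 1$. Pairing the left-hand side against $\overline\psi$ produces exactly a trilinear expression of the form already estimated, with the third factor in $L^2$; the hypothesis $\tfrac1p+\tfrac1q=\tfrac12$ is precisely the condition $\tfrac1p+\tfrac1q+\tfrac12=1$ required for the trilinear estimate, so we recover the desired bound $A(\mathbf m)\|v\|_{L^p}\|g\|_{L^q}$ after taking the supremum over $\psi$. Taking $\widehat v(\xi-\eta)$ in place of $\widehat v(\xi+\eta)$ amounts to a reflection of $\eta$ and changes nothing.

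There is really no hard step; the only thing to watch is the bookkeeping of $(2\pi)^d$ factors under the paper's Fourier convention. Since the same convention appears in both the hypothesis $A(\mathbf m)$ and in the trilinear integral, these constants are harmless and disappear into the implicit constant in $\lesssim$.
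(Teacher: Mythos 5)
Your argument is correct, and the paper in fact does not reprove this lemma at all -- it simply points to Appendix~B.2 of Pusateri's paper \cite{pu}. The proof you give is the standard one for such Coifman--Meyer-type multiplier estimates and is the argument one expects to find in that reference: Fourier-inverting $\mathbf m$ to a kernel $K\in L^1(\mathbb R^{2d})$ with $\|K\|_{L^1}\lesssim A(\mathbf m)$, writing the trilinear form as $\int K(u,w)\bigl(\int v(u-c)g(w-c)h(c)\,dc\bigr)\,du\,dw$, applying H\"older in $c$, and then deducing the $L^2_\xi$ bilinear bound by duality (taking $\widehat h$ to be the $L^2$ dual function, using that $\tfrac1p+\tfrac1q=\tfrac12$ is exactly $\tfrac1p+\tfrac1q+\tfrac12=1$). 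Two cosmetic remarks. First, with the paper's convention $\widehat v(\xi)=\int e^{-ix\cdot\xi}v(x)\,dx$ the inversion formula carries a minus sign, $\mathbf m(\eta,\sigma)=(2\pi)^{-2d}\int K(x,y)e^{-ix\cdot\eta-iy\cdot\sigma}\,dx\,dy$, so the inner $(\eta,\sigma)$-integral actually collapses to a multiple of $K(a+c,b+c)$ rather than $K(-(a+c),-(b+c))$; this is immaterial because $\|K(-\cdot,-\cdot)\|_{L^1}=\|K\|_{L^1}$, as you implicitly acknowledge. Second, the Fubini step that lets you collapse the $(\eta,\sigma)$-integral is formal for general $L^p$ data and should, strictly speaking, first be carried out for Schwartz $v,g,h$ and then extended by density; this is routine but worth a sentence.
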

	
	For this see Appendix B.2 in \cite{pu}. As a corollary we have
	
	\begin{lm}\label{LemmaB2}
		Suppose	
		$ \mathbf P, \mathbf Q \in L^{1}(\mathbb{R}^{d}\times\mathbb{R}^{d}) $
		satisfies $(\ref{ineq:m})$ with $A(\mathbf P), A(\mathbf Q).$
		Then,
		$$
		\Big\|
		\iint_{\mathbb{R}^{d}\times\mathbb{R}^{d}}
		\mathbf P(\eta,\sigma)\otimes \mathbf Q(\eta,\sigma)
		e^{ix\eta}
		e^{iy\sigma}d\sigma d\eta
		\Big\|_{L_{x,y}^{1}}
		\lesssim
		A(\mathbf P)A(\mathbf Q).
		$$
		%Here, we though of $P,Q$ as a function on $\mathbb{R}^{2d}$.
	\end{lm}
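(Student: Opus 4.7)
The plan is to reduce the tensor-valued bound to a scalar statement one component at a time, convert the pointwise product in frequency to a convolution in physical space, and then apply Young's convolution inequality. Write $\mathbf P=(P_i)$ and $\mathbf Q=(Q_j)$ so that $(\mathbf P\otimes\mathbf Q)_{ij}=P_iQ_j$, and recall from the notational conventions that the norm of a tensor-valued function is the sum of the norms of its components. Thus
\[
\Big\|\iint (\mathbf P\otimes\mathbf Q)(\eta,\sigma)\,e^{ix\eta}e^{iy\sigma}d\eta d\sigma\Big\|_{L^1_{x,y}}
=\sum_{i,j}\Big\|\iint P_i(\eta,\sigma)Q_j(\eta,\sigma)\,e^{ix\eta}e^{iy\sigma}d\eta d\sigma\Big\|_{L^1_{x,y}}.
\]

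For each fixed pair $(i,j)$, define the scalar kernels
\[
\Phi_i(x,y)=\iint P_i(\eta,\sigma)e^{ix\eta}e^{iy\sigma}d\eta d\sigma,\qquad
\Psi_j(x,y)=\iint Q_j(\eta,\sigma)e^{ix\eta}e^{iy\sigma}d\eta d\sigma,
\]
which are, up to the fixed dimensional constant $(2\pi)^{2d}$, the inverse Fourier transforms on $\mathbb R^{2d}$ of $P_i$ and $Q_j$. By the convolution theorem on $\mathbb R^{2d}$,
\[
\iint P_i(\eta,\sigma)Q_j(\eta,\sigma)e^{ix\eta}e^{iy\sigma}d\eta d\sigma
= C_d\,(\Phi_i *_{x,y}\Psi_j)(x,y),
\]
for some absolute constant $C_d$ depending only on $d$. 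Young's convolution inequality on $\mathbb R^{2d}$ then yields
\[
\|\Phi_i *\Psi_j\|_{L^1_{x,y}}\le \|\Phi_i\|_{L^1_{x,y}}\,\|\Psi_j\|_{L^1_{x,y}}.
\]

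Summing over $i$ and $j$ and separating the product of sums gives
\[
\sum_{i,j}\|\Phi_i *\Psi_j\|_{L^1_{x,y}}
\le \Big(\sum_i\|\Phi_i\|_{L^1_{x,y}}\Big)\Big(\sum_j\|\Psi_j\|_{L^1_{x,y}}\Big),
\]
and by the hypothesis \eqref{ineq:m} applied to $\mathbf P$ and $\mathbf Q$ separately, together with the convention on tensor-valued norms, the two factors are bounded by $A(\mathbf P)$ and $A(\mathbf Q)$ respectively. Combining these estimates with the absolute constant $C_d$ yields the desired conclusion. There is no real obstacle here: the lemma is a direct corollary of Lemma \ref{LemmaB}'s hypothesis together with the convolution theorem, and the only bookkeeping issue is handling the componentwise sums coming from the tensor product and the dimensional constant from the chosen Fourier normalization.
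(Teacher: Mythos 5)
The paper states Lemma \ref{LemmaB2} without proof (it simply says ``As a corollary we have''), and your argument — componentwise reduction, the convolution theorem on $\mathbb R^{2d}$ turning the frequency-side product $P_iQ_j$ into a physical-space convolution, then Young's inequality $\|\Phi_i*\Psi_j\|_{L^1}\le\|\Phi_i\|_{L^1}\|\Psi_j\|_{L^1}$, and finally the tensor-norm convention to sum over $i,j$ — is exactly the natural one-line justification the authors evidently had in mind. Your proof is correct and complete.
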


	\begin{lm}\label{lowbound}
		Let $1<\alpha<2$ and $d \ge 2$. Then for any $\xi,\sigma \in \mathbb{R}^{d}$ we have
		$$
		\big\vert
		\frac{\xi}{\vert\xi\vert^{2-\alpha}}
		-\frac{\sigma}{\vert\sigma\vert^{2-\alpha}}
		\big\vert
		\gtrsim
		\min\Big(\vert\sigma\vert^{\alpha-1},
		\frac{\vert\xi-\sigma\vert}
		{\vert\sigma\vert^{2-\alpha}}\Big).
		$$
	\end{lm}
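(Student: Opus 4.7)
The plan is to work with the map $f(\xi) := \xi/|\xi|^{2-\alpha} = |\xi|^{\alpha-1}\hat\xi$ and exploit the fact that, on each ray, $f$ stretches by a factor $|\xi|^{\alpha-1}$, while infinitesimally the derivative
\[
Df(\xi) = |\xi|^{\alpha-2}\bigl(I + (\alpha-2)\hat\xi\otimes\hat\xi\bigr)
\]
is symmetric positive definite with smallest eigenvalue $(\alpha-1)|\xi|^{\alpha-2}$ (attained along $\hat\xi$). By replacing $\xi,\sigma$ with $\xi/|\sigma|,\sigma/|\sigma|$ one may reduce to $|\sigma|=1$ (both sides are homogeneous of degree $\alpha-1$ in $\sigma$ under the simultaneous rescaling), so the inequality becomes $|f(\xi)-f(\omega)|\gtrsim \min(1,|\xi-\omega|)$ for $|\omega|=1$, but it is just as convenient to argue without rescaling.

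First I would split into the \emph{separated} case $|\xi|\le |\sigma|/2$ or $|\xi|\ge 2|\sigma|$ and the \emph{comparable} case $|\sigma|/2 < |\xi|<2|\sigma|$. In the separated case, the triangle inequality gives
\[
|f(\xi)-f(\sigma)|\;\ge\;\bigl||\xi|^{\alpha-1}-|\sigma|^{\alpha-1}\bigr|\;\ge\;\bigl(1-2^{-(\alpha-1)}\bigr)|\sigma|^{\alpha-1},
\]
(with the analogous estimate on the other side), and since $\min\bigl(|\sigma|^{\alpha-1},|\xi-\sigma|/|\sigma|^{2-\alpha}\bigr)\le |\sigma|^{\alpha-1}$, the claim is immediate.

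In the comparable case I would parameterise the segment $\xi(t)=\sigma+t(\xi-\sigma)$ for $t\in[0,1]$, write
\[
f(\xi)-f(\sigma)=\int_{0}^{1} Df(\xi(t))(\xi-\sigma)\,dt,
\]
and pair with the unit vector $e=(\xi-\sigma)/|\xi-\sigma|$. Using $\langle Df(\eta)v,v\rangle\ge(\alpha-1)|\eta|^{\alpha-2}|v|^{2}$ this yields
\[
|f(\xi)-f(\sigma)|\;\ge\;(\alpha-1)\,|\xi-\sigma|\int_{0}^{1}|\xi(t)|^{\alpha-2}\,dt.
\]
After the change of variable $u=t|\xi-\sigma|$, writing $\xi(t)=\sigma+u\omega$ with $|\omega|=1$, an algebraic identity gives $|\xi(t)|^{2}=(u-c)^{2}+h^{2}$ where $c=-\langle\sigma,\omega\rangle$ and $h^{2}=|\sigma|^{2}-c^{2}\ge 0$, and the endpoints satisfy $c^{2}+h^{2}=|\sigma|^{2}$, $(L-c)^{2}+h^{2}=|\xi|^{2}$ with $L=|\xi-\sigma|$. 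Then I would bound
\[
\int_{0}^{L}\bigl((u-c)^{2}+h^{2}\bigr)^{(\alpha-2)/2}du \;=\;\int_{-c}^{\,L-c}(v^{2}+h^{2})^{(\alpha-2)/2}\,dv
\]
below using the monotonicity of the integrand in $|v|$: when $c\in[0,L]$ the two halves are controlled by the endpoint values $|\sigma|^{\alpha-2}$ and $|\xi|^{\alpha-2}$, while when $c\notin[0,L]$ the whole interval lies on one side of $0$ and the integrand is $\ge\min(|\sigma|,|\xi|)^{\alpha-2}$. Since $|\xi|\sim|\sigma|$ in this case (and $\alpha-2<0$), every subcase gives $\int\gtrsim L|\sigma|^{\alpha-2}$, hence
\[
|f(\xi)-f(\sigma)|\gtrsim (\alpha-1)|\xi-\sigma|\,|\sigma|^{\alpha-2}=(\alpha-1)\frac{|\xi-\sigma|}{|\sigma|^{2-\alpha}},
\]
which dominates the minimum on the right-hand side.

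The main obstacle is the possibility, in the comparable case, that the segment from $\sigma$ to $\xi$ passes near the origin; this is where the fact $\alpha>1$ is essential, since then $(v^{2}+h^{2})^{(\alpha-2)/2}$ is locally integrable and the endpoint bound above remains valid even when $h$ is very small. The positivity of the radial eigenvalue $\alpha-1$ of $I+(\alpha-2)\hat\xi\otimes\hat\xi$ (which fails for $\alpha\le 1$) is the other crucial input. Everything else is bookkeeping of constants.
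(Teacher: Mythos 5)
Your proposal is correct, and the core idea — write the difference as a line integral of $Df$ and use that $I+(\alpha-2)\hat\xi\otimes\hat\xi$ has smallest eigenvalue $\alpha-1>0$, then pair with $(\xi-\sigma)/|\xi-\sigma|$ — is precisely the paper's. The case organization differs in a way worth noting. The paper does not split on $|\xi|\sim|\sigma|$; it instead first isolates the degenerate case where the segment $\xi_\tau=\tau\xi+(1-\tau)\sigma$ passes through the origin (handled by a direct computation of $\mathbf z$ for antiparallel $\xi,\sigma$), and otherwise bounds the resulting $\int_0^1|\xi_\tau|^{-(2-\alpha)}\,d\tau$ by the crude triangle inequality $|\xi_\tau|\le|\sigma|+\tau|\xi-\sigma|$, then splitting on $|\xi-\sigma|\lessgtr|\sigma|$. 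You instead dispose of the separated case $|\xi|\notin(|\sigma|/2,2|\sigma|)$ outright by the reverse triangle inequality on magnitudes (no MVT needed there), and in the comparable case you compute $|\xi_\tau|^2$ exactly via the Pythagorean identity and bound the integral by endpoint monotonicity, absorbing the origin-crossing possibility into the local integrability of $|v|^{\alpha-2}$ for $\alpha>1$. Your route avoids treating the degenerate alignment as a separate case and is arguably cleaner there; the paper's triangle-inequality bound on $|\xi_\tau|$ is less precise but requires less geometry. Both are complete.
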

	\begin{proof}[Proof of Lemma \ref{lowbound}]
		Set
		$$
		\mathbf z := \frac{\xi}{|\xi|^{2-\alpha}} - \frac{\sigma}{|\sigma|^{2-\alpha}}
		$$
		and let $\xi_\tau = \tau \xi + (1-\tau)\sigma$ for $\tau \in [0, 1]$. If $\xi_\tau = 0$ for some $\tau = \tau_0$, then since $\xi \neq \sigma$, $\xi \neq 0$ and $\sigma \neq 0$, $\tau_0 = |\sigma|/|\xi-\sigma| \in (0, 1)$ and $\xi = - [(1-\tau_0)/\tau_0] \sigma$. Substituting this into $z$, we have
		$$
		\mathbf z = -\al\left(1+ \big(\frac{1-\tau_0}{\tau_0}\big)^{\al-1}\right)\frac{\sigma}{|\sigma|^{2-\al}},
		$$
		which means $|z| \gtrsim |\sigma|^{\al-1}$. Note that $1 > \tau_0 = |\sigma|/|\xi-\sigma|$.
		
		From now we assume that $\xi_\tau \neq 0$ for all $\tau \in [0, 1]$. Then
		\begin{align}\label{z}
		\mathbf z = \int_0^1 \partial_\tau \left[\frac{\xi_\tau}{|\xi_\tau|^{2-\al}}\right] \,d\tau = \int_0^1 |\xil|^{-(2-\al)}\mathbf M_{\xi_\tau}(\xi-\sigma)\,d\tau,
		\end{align}
		where $\mathbf M_{\xi_\tau}$ is the $d \times d$ symmetric matrix $$\mathbf M_{\xi_\tau} = \mathbf I - (2-\al)\frac{\xil \otimes \xil}{|\xi_\tau|^2}.$$
		By Cauchy-Schwarz inequality we have that for any $x \in \mathbb R^d$
		\begin{align*}
		x \cdot \mathbf M_{\xil}x &= |x|^2 - (2-\al)\frac{(x\cdot\xil)^2}{|\xil|^2} \ge (\al-1)|x|^2.
		\end{align*}
		Thus the eigenvalues of $\mathbf M_{\xil}$ is at least $(\al-1)$, which implies that
		\begin{align*}
		|\mathbf M_{\xil}x| \ge (\al-1)|x|.
		\end{align*}
		Using this fact, we get that
		$$
		|\mathbf z| \ge \int_0^1|\xil|^{-(2-\al)}\,d\tau |\xi-\sigma| \ge \int_0^1 |1 + \tau\frac{|\xi-\sigma|}{|\sigma|}|^{-(2-\al)}\,d\tau \frac{|\xi-\sigma|}{|\sigma|^{2-\al}}.
		$$
		If $\frac{|\xi-\sigma|}{|\sigma|} \le 1$, then we have
		\begin{align*}
		|\mathbf z| \gtrsim \frac{|\xi-\sigma|}{|\sigma|^{2-\al}}.
		\end{align*}
		If $\frac{|\xi-\sigma|}{|\sigma|} > 1$, then
		\begin{align*}
		|\mathbf z| \gtrsim \int_0^\frac{|\sigma|}{|\xi-\sigma|}\Big(1+ \tau\frac{|\xi-\sigma|}{|\sigma|}\Big)^{-(2-\al)}\,d\tau \frac{|\xi-\sigma|}{|\sigma|^{2-\al}} \gtrsim  |\sigma|^{\al-1}.
		\end{align*}
		Therefore we estimate
		$$
		|\mathbf z| \gtrsim (\al-1)\min(|\sigma|^{\al-1}, |\xi-\sigma|/|\sigma|^{2-\al}).
		$$
	\end{proof}
	
	\begin{lm}\label{diff-phase}
		Let $\phi, \widetilde\phi$ be as in \eqref{phase}, \eqref{tildephase}, respectively. Then we have
		\begin{align*}
		\vert \phi(\xi,\eta,\sigma)-\widetilde\phi(\xi,\eta,\sigma)\vert
		&\lesssim \vert\eta\vert^{\alpha}.
		\end{align*}
	\end{lm}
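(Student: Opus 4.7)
The plan is to view $\phi$ as the first-order (in $\eta$) Taylor expansion difference for the map $a\mapsto |a|^{\alpha}$, applied telescopically at the two base points $a=\xi$ and $a=\xi+\sigma$. Concretely, rewrite
\[
\phi(\xi,\eta,\sigma)\;=\;\bigl(|\xi+\eta+\sigma|^{\alpha}-|\xi+\sigma|^{\alpha}\bigr)\;-\;\bigl(|\xi+\eta|^{\alpha}-|\xi|^{\alpha}\bigr),
\]
and compare with $\widetilde\phi$, which (up to sign) is exactly the corresponding difference of the linearizations of $|\,\cdot\,|^{\alpha}$ in the direction $\eta$ at these two base points. Hence the lemma reduces to the single pointwise statement
\[
\Bigl|\,|a+\eta|^{\alpha}-|a|^{\alpha}-\alpha\,\tfrac{a\cdot\eta}{|a|^{2-\alpha}}\,\Bigr|\;\lesssim\;|\eta|^{\alpha}\qquad(a\in\mathbb R^{d}\setminus\{0\},\,\eta\in\mathbb R^{d}),
\]
applied once with $a=\xi$ and once with $a=\xi+\sigma$, followed by subtraction.

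To prove this pointwise bound I would dichotomize according to whether $|a|$ is large or small compared with $|\eta|$. If $|a|\ge 2|\eta|$, then along the segment $a+\tau\eta$, $\tau\in[0,1]$, one has $|a+\tau\eta|\ge|a|/2>0$, so the map $\tau\mapsto|a+\tau\eta|^{\alpha}$ is $C^{2}$ there, and the integral-form Taylor remainder gives
\[
\Bigl|\,|a+\eta|^{\alpha}-|a|^{\alpha}-\alpha\,\tfrac{a\cdot\eta}{|a|^{2-\alpha}}\,\Bigr|\;\lesssim\;|\eta|^{2}\sup_{\tau\in[0,1]}|a+\tau\eta|^{\alpha-2}\;\lesssim\;|\eta|^{2}|a|^{\alpha-2}\;\lesssim\;|\eta|^{\alpha},
\]
where the last step uses $\alpha-2<0$ and $|a|\ge|\eta|$. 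If instead $|a|<2|\eta|$, the right-hand side no longer controls the Taylor remainder (the second derivative of $|\,\cdot\,|^{\alpha}$ is singular at the origin when $\alpha<2$), but each of the three terms can be estimated separately: $|\,|a+\eta|^{\alpha}|\lesssim|\eta|^{\alpha}$, $|\,|a|^{\alpha}|\lesssim|\eta|^{\alpha}$, and $\alpha|a\cdot\eta|/|a|^{2-\alpha}\le\alpha|a|^{\alpha-1}|\eta|\lesssim|\eta|^{\alpha}$, using $\alpha-1>0$ in the last inequality. Adding these gives the desired bound.

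Combining the two base-point applications telescopically, the two error terms (each of size $|\eta|^{\alpha}$) add, and the linear parts reproduce $\widetilde\phi$ (up to an overall sign that is immaterial for an absolute-value estimate). The main technical point is the dichotomy above, which handles the fact that $|\nabla|^{\alpha}$ fails to be $C^{2}$ near the origin when $\alpha<2$; this is precisely the same non-smoothness issue emphasized throughout the paper as restricting the admissible range of $\alpha$. No uniformity in $\xi,\sigma$ is needed beyond what the pointwise estimate provides, since the bound depends only on $|\eta|^{\alpha}$.
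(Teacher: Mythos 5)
Your proof is correct and follows essentially the same route as the paper: you reduce the claim to two one-variable Taylor remainders of $a\mapsto |a|^\alpha$ (at base points $\xi$ and $\xi+\sigma$), then handle each by a dichotomy depending on whether $|a|$ is large or small relative to $|\eta|$. The paper expresses the large-$|a|$ remainder via a double mean-value integral involving the matrix $\mathbf M_{\xi_{\sigma\tau}}$ from Lemma \ref{lowbound}, while you use the equivalent integral-form second-order Taylor remainder with a supremum bound on $|a+\tau\eta|^{\alpha-2}$; this is a presentational difference only, and the small-$|a|$ case is treated identically in both.
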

	\begin{proof}[Proof of Lemma \ref{diff-phase}]
		\begin{align*}
		&|\phi(\xi, \eta, \sigma) - \widetilde \phi(\xi, \eta, \sigma)| \\
		&\lesssim \big||\xi+\eta|^\al - |\xi|^\al - \al\frac{(\xi\cdot\eta)}{|\xi|^{2-\al}}\big| + \big||\xi+\sigma+\eta|^\al - |\xi+\sigma|^\al - \al\frac{((\xi+\sigma)\cdot\eta)}{|\xi+\sigma|^{2-\al}}\big|.
		\end{align*}
		We only consider the first term. If $|\xi| \lesssim |\eta|$, then a direct calculation gives us
		$$
		\big||\xi+\eta|^\al - |\xi|^\al - \al\frac{(\xi\cdot\eta)}{|\xi|^{2-\al}}\big| \lesssim |\eta|^\al.
		$$
		If $|\xi| \gg |\eta|$, then
		MVP gives us
		$$
		|\xi+\eta|^\al - |\xi|^\al = \int_0^1 \al\frac{\xi_\sigma}{|\xi_\sigma|^{2-\al}} \cdot \eta\,d\sigma,
		$$
		where $\xi_\sigma = \xi + \sigma\eta$. By another MVP as in \eqref{z} we get
		$$
		|\xi+\eta|^\al - |\xi|^\al - \al\frac{(\xi\cdot\eta)}{|\xi|^{2-\al}} = \al \iint_{[0,1]^2} |\xi_{\sigma\tau}|^{\al-2}[\mathbf M_{\xi_{\sigma\tau}}(\sigma\tau\eta)]\cdot \eta\,d\sigma
		d\tau,
		$$
		where $\xi_{\sigma\tau} = \xi + \sigma\tau\eta$. Since $|\xi| \gg |\eta|$, $|\xi_{\sigma\tau}|\sim |\xi|$ and hence $|\xi_{\sigma\tau}|^{\al-2}\lesssim |\eta|^{\al-2}$.
		On the other hand, $|[\mathbf M_{\xi_{\sigma\tau}} \eta]\cdot \eta | \lesssim |\eta|^2$. Therefore
		$$
		||\xi+\eta|^\al - |\xi|^\al| \lesssim |\eta|^\al.
		$$
	\end{proof}

	\bibliographystyle{plain}

\end{document}